\newcommand{\Hz}  {{H^o}}
\newcommand{\Had} {{\bar H}}
\newcommand{\Spec}{\operatorname{Spec}}
\newcommand{\R}   {{\mbA}}
\newcommand{\GZ}  {{\mbG}}
\newcommand{\Hom} {\operatorname{Hom}}
\newcommand{\id}  {{\mti\mtd}}
\newcommand{\Lie} {\operatorname{Lie}}
\newcommand{\tPhi}{{\tilde \Phi}}
\newcommand{\tal} {{\tilde \alpha}}
\newcommand{\tDe} {{\tilde \Delta}}
\newcommand{\tom} {{\tilde \omega}}
\newcommand{\Pic} {\operatorname{Pic}}
\newcommand{\SPic}{\operatorname{SPic}}
\newcommand{\XD}  {X_{\tilde \Delta}}
\newcommand{\gr}  {\operatorname{Gr}}
\newcommand{\tW}  {{\tilde W}}
\newcommand{\mcalT}{\boldsymbol{\mathcal T}}
\newcommand{\lGIT}{\backslash\!\backslash}
\newcommand{\Proj}{\operatorname{Proj}}
\newcommand{\bgrl}{{\underline{\lambda}}}
\newcommand{\bgra}{{\underline{\alpha}}}
\newcommand{\bmu}{{\underline{\mu}}}
\newcommand{\ssupp}{\mathop{\mathnormal supp_{\Omega}}\nolimits}
\newcommand{\divi}{\operatorname{div}}
\newcommand{\Ic}{{I^c}}
\newtheorem{lem}{Lemma}[section]
\newtheorem{teo}[lem]{Theorem}
\newtheorem{prp}[lem]{Proposition} 
\newtheorem{cor}[lem]{Corollary}
 \theoremstyle{definition}
\newtheorem{oss}[lem]{Remark} 
\theoremstyle{remark}
\newcommand{\mA}  {\mathbb A} 
 \newcommand{\mC}{\mathbb C}  
 \newcommand{\mF}{\mathbb F}
\newcommand{\mN}{\mathbb N}  \newcommand{\mP}{\mathbb P} 
\newcommand{\mQ}{\mathbb Q} \newcommand{\mR}{\mathbb R}
\newcommand{\mZ}{\mathbb Z}
\newcommand{\calA}  {\mathcal A} 
\newcommand{\calB}{\mathcal B}
 \newcommand{\calL}{\mathcal L} \newcommand{\calM}{\mathcal M}
 \newcommand{\calO}{\mathcal O} 
\newcommand{\calT}{\mathcal T}  \newcommand{\calV}{\mathcal V}
  \newcommand{\gog}{\mathfrak g}
\newcommand{\goh}{\mathfrak h}
\newcommand{\got}{\mathfrak t}
\newcommand{\mbA}  {\mathbf A} 
  \newcommand{\mbG}{\mathbf G}
 \newcommand{\mbL}{\mathbf L}
 \newcommand{\mbU}{\mathbf U} 
 \newcommand{\mbX}{\mathbf X}
  \newcommand{\mtd}{\mathrm d} 
 \newcommand{\mti}{\mathrm i}
\newcommand{\gra}{\alpha} \newcommand{\grb}{\beta}       
 \newcommand{\grl}{\lambda}     \newcommand{\grs}{\sigma}
\newcommand{\grf}{\varphi}      \newcommand{\om} {\omega}
\newcommand{\grG}{\Gamma} \newcommand{\grD}{\Delta}
\newcommand{\grL}{\Lambda} 
\newcommand{\mi}  {\imath}
\newcommand{\mj}  {\jmath}
\newcommand{\mk}  {\Bbbk}
\newcommand{\surietta}{\longrightarrow \!\!>}
\newcommand{\lra}     {\longrightarrow}
\newcommand{\isocan}  {\simeq}
\newcommand{\vuoto}   {\varnothing}
\newcommand{\cech}    {\spcheck}
\newcommand{\coinc}   {\equiv}
\newcommand{\defi}    {:=}
\renewcommand{\geq}   {\geqslant}
\renewcommand{\leq}   {\leqslant}
\newcommand{\senza}   {\smallsetminus}
\newcommand{\ristretto}{\bigr|}
            \newcommand{\st}       {\, : \,}
         \newcommand{\mand}     {\text{ and }}
  \newcommand{\msuchthat}{\text{ such that }}
  \newcommand{\mforall}  {\text{ for all }}
\title{The quotient of a complete symmetric variety} 
\author{Corrado De Concini}\address{Dip. Mat. Castelnuovo, Univ. di
  Roma La Sapienza, Rome, Italy}\email{deconcin@mat.uniroma1.it}
\author{ Senthamarai Kannan}\address{Chennai Mathematical Institute,
  Chennai, India}\email{kannan@cmi.ac.in} 
\author{Andrea Maffei}\address{Dip.  Mat. Castelnuovo, Univ. di Roma
  La Sapienza, Rome, Italy}\email{amaffei@mat.uniroma1.it}
\dedicatory{Dedicated to Ernest Vinberg on the occasion of his 70th
  birthday }
\begin{document}

\maketitle

\section{Introduction} Let $G$ be a semisimple simply connected
algebraic group over an algebraically closed field of characteristic
different from 2.  Given an involution $\grs$ of $G$ with fixed
subgroup $G^\grs$, we fix a subgroup $G^\grs\subset H\subset
N_G(G^\grs)$.

Our goal in this paper is the study of the action of $H$ on certain
completions of $G/H$ with the methods of geometric invariant theory.

The study of such problems starts with the famous paper \cite{KoRa} of
Kostant and Rallis in which the $H$ action on the quotient $\Lie G /
\Lie H$ is studied. This can be considered as an infinitesimal version
of our study. Results similar to those in \cite{KoRa} have been later
obtained by Richardson in \cite{Rich} in the case of the quotient
$G/H$.

In particular Richardson has proved, among other things, that if we
take the image $S_H$ in $G/H$ of an anisotropic maximal torus $S$ in
$G$ and consider the action of the restricted Weyl group $\tilde W$ on
$S_H$ (see below for the definitions), the GIT quotient $H\lGIT G/H$
is isomorphic to $\tilde W\backslash S_H$. Furthermore he shows that
the closed $H$ orbits in $G/H$ are precisely the orbits of elements in
$S_H$.

In this paper we generalize these two results to the case of a
completion $Y$ of $G/H$. In particular we reprove the results of
Richardson mentioned above.

To state our result take a smooth toroidal projective $G$ equivariant
completion $Y$ of $G/H$. In $Y$ consider the closure $Y_S$ of $S_H$.
The $\tilde W$ action on $S_H$ extends to an action on $Y_S$. Fix an
ample line bundle $\mathcal L$ on $Y$. Our first result is that the
GIT quotient $H\lGIT_{\mathcal L} Y$ relative to $\mathcal L$ is
isomorphic to $\tilde W\backslash Y_S$. In particular this quotient
does not depend on the choice of $\mathcal L$. (Theorem
\ref{teo:GIT}).

We then pass to the study of the set $Y^{ss}\subset Y$ of semistable
points in $Y$ with respect to $\mathcal L$. Also in this case we show
that $Y^{ss}$ does not depend on the choice of $\mathcal L$ (Remark
\ref{indipendence}) and we describe rather precisely the intersection
of $Y^{ss}$ with any $G$ orbit. In particular we show that given two
$H$ orbits $O_1\subset \overline O_2$ in $Y^{ss}$ then they both lie in the same
$G$ orbit (Proposition \ref{lem:Gorbiteechiusure}) and that a $H$
orbit $O$ in $Y^{ss}$ is closed if and only if it meets $Y_S$ (Theorem
\ref{teo:closed}) .  These last facts allow us to give a version of
our results in the case of any $G$ stable open subset in $Y$.

The proofs of our results are rather straightforward in characteristic
zero and are based on the careful analysis of sections of line bundles
on $Y$ given in \cite{Bifet} and \cite{Brion}. However to carry out
our proofs in positive characteristic we have to deal with a number of
rather technical results which often do not appear in the literature
and which, in view of this, we have decided to explain here.

\section{Preliminaries}\label{sec:prel}

In this section we introduce notations, recall some simple properties
and describe the spherical weights relative to a given involution.

Let us choose an algebraically closed field $\mk$ whose characteristic
is not equal to $2$.  Usually all algebraic group schemes in this
paper are going to be affine and defined over $\mk$ but, occasionally
we are going to consider group schemes defined over the ring
$\R:=\mZ[1/2]$ and flat over $\Spec \R$. Gothic letters are going to
denote Lie algebras.

Let $G$ be a semisimple and simply connected algebraic group.  Let
$\bar G$ be the adjoint quotient of $G$ and $Z$ the kernel of the
projection of the isogeny $G \lra \bar G$. This is a possibly not
reduced subgroup of $G$ whose associated reduced subvariety is given
by the center of $G$.

Let $\grs$ be an involution of $G$ and let $\Hz = G^\grs$ be the
subgroup of elements fixed by $\grs$. We consider also the inverse
image $\Had$ under the isogeny $G\lra \bar G$ of the subgroup of $\bar
G$ of elements fixed by the involution of $\bar G$ induced by $\grs$.
We recall that $\Hz$ is connected and reductive and that $\Had$ is a
possibly not reduced subgroup of $G$ whose associated reduced subgroup
is the normalizer $N_G(\Hz)$ of $\Hz$. It is known that the connected
component of the identity of $\Had$ with reduced structure is equal to
$\Hz$ (see \cite{DP1}).

Let now $\Hz\subset H \subset \Had$ be a possibly not reduced subgroup
of $G$. The quotient $G/H = \Spec \mk[G]^H$ is called a
\emph{symmetric variety}.

We fix an \emph{anisotropic maximal torus} $S$ of $G$, that is a torus
of $G$ such that $\grs(s)=s^{-1}$ for all $s\in S$, and having maximal
dimension among the tori with this property. The dimension $\ell$ of
$S$ is called the \emph{rank} of the symmetric variety. We choose also
a $\grs$ stable maximal torus $T$ of $G$ containing $S$ 
  and a Borel subgroup containing $T$ with the
property that the intersection $B\cap \grs(B)$ has minimal possible
dimension. Occasionally we will also need to consider \emph{isotropic
  tori}, that is tori contained in $H$.

\subsection{Ring of definition}\label{ssec:Z}
It will be important for us that the classification of involutions is
independent of the characteristic (see \cite{Spr1}). So we can use Kac
classification or Satake classification to construct the involutions.
If we use Kac classification we see that we can assume that $G, \grs$
and hence $\Hz$ are all defined over $\R$ and that there is a
maximally isotropic maximal torus (this means a maximal torus of $G$
containing a maximal torus of $\Hz$) defined over $\R$ and a $\grs$
stable Borel subgroup containing this torus also defined over $\R$. On
the other hand if we use Satake classification, we see that we can
assume that $G, \grs, \Hz$, the maximal torus $T$, the torus $S$ and
the Borel subgroup $B$ are all defined over $\R$.  However,
occasionally, we will need to work with a $\R$ form of $G$ where both
maximally isotropic and maximally anisotropic maximal tori are defined
and split over $\R$.

We start with a flat $\R$ form $\GZ$ of $G$, and with a $\grs$ defined
over $\R$ constructed using Kac diagrams. So $ \Hz$ is defined over
$\R$ and there exists a $\R$ split maximal torus $N$ of $\Hz$ defined
over $\R$ and a $\R$ split maximal torus $M$ of $G$ defined over $\R$
containing $N$. The characters of $M, N$ are defined over $\R$ and the
root decomposition of the Lie algebra of $G$ is also defined over
$\R$. In particular all Borel subgroups containing $M$ are defined
over $\R$.  Let $B_M$ be a $\grs$ stable Borel subgroups of $G$
containing $M$.  Let $\Psi$ and $\Psi^+\subset \Psi$ be the
corresponding sets of roots and positive roots of the Lie algebra of
$G$ with respect to $B_M$. Finally notice that also $\bar H$, hence
$H$, can be assumed to be defined over $\R$.

We want to show that in $G$ there is a maximally anisotropic maximal
torus defined and split over $\R$.  This slightly strength a result in
\cite{DS}.

\begin{lem}\label{lem:STZ}
  There is a torus $S$ in $G$ defined and split over $\R$ such that
  $\grs(s)=s^{-1}$ for all $s\in S$ and $S$ has maximal dimension
  among the tori with this property. Moreover there is a maximal torus
  $T$ of $G$ containing $S$ defined and split over $\R$.  Finally the
  root decomposition of $\gog$ with respect to the action of the torus
  $T$ is defined over $\R$ and there is a Borel $B$ subgroup
  containing $T$ and defined and flat over $\R$ such that the
  dimension of $\grs(B)\cap B$ is the minimal possible. The two Borel
  subgroups $B$ and $B_M$ are conjugated by an element of $\GZ(\R)$.
\end{lem}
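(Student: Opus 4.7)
The plan is to construct $S$, $T$, and $B$ directly as subgroup schemes of $\GZ$, rather than by conjugating the Kac data $(M, B_M)$; a naive conjugation from a maximally isotropic $M$ to a maximally anisotropic torus typically requires scalars outside $\R = \mZ[1/2]$. First I analyse the action of $\grs$ on the root system $\Psi$ of $G$ relative to $M$: each root $\alpha$ belongs to one of four classes (compact imaginary, noncompact imaginary, real, complex) according to whether $\grs$ fixes or negates $\alpha$ and the sign induced on the root space. By the standard structure theory of symmetric pairs in characteristic $\neq 2$, I select a maximal set $\{\alpha_1, \ldots, \alpha_\ell\}$ of mutually strongly orthogonal noncompact imaginary roots (or, in type-II cases, pairs of complex roots swapped by $\grs$), with $\ell$ equal to the rank of the symmetric variety.

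For each $\alpha_i$, let $G_i \subset \GZ$ be the rank-one semisimple subgroup generated by the Chevalley subgroups $U_{\pm\alpha_i}$, defined and flat over $\R$ and stabilised by $\grs$. Inside $G_i$ I exhibit explicitly a one-dimensional anisotropic subtorus $S_i$, defined and split over $\R$, on which $\grs$ acts by inversion: its Lie algebra is generated by $e_{\alpha_i} + f_{\alpha_i}$, and the exponentiated parametrisation $\lambda \mapsto \tfrac{\lambda+\lambda^{-1}}{2}\id + \tfrac{\lambda-\lambda^{-1}}{2}(e_{\alpha_i}+f_{\alpha_i})$ involves only halves and is therefore defined over $\R$. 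Strong orthogonality of the $\alpha_i$ forces the subgroups $G_i$ to commute pairwise, so $S := S_1 \cdots S_\ell$ is an $\ell$-dimensional $\R$-split torus on which $\grs$ acts by inversion; maximality of $\dim S$ among anisotropic subtori follows from the choice of the $\alpha_i$.

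To construct $T$, set $M_0 := \bigl(\bigcap_i \ker\alpha_i\bigr)^\circ \subset M$, a subtorus of $M$ of dimension $\dim M - \ell$ defined and split over $\R$. Each $G_j$ centralises $M_0$, and $\grs$ preserves $M_0$; by maximality of $\dim S$, $\grs$ must act trivially on $M_0$ (any $\grs$-anti-fixed direction in $M_0$, together with $S$, would form a larger anisotropic subtorus). Hence $T := S \cdot M_0$ is a $\grs$-stable maximal torus of $G$, split over $\R$ as a quotient of the $\R$-split torus $S \times M_0$ by a finite group scheme, and the root decomposition of $\gog$ with respect to $T$ is automatically defined over $\R$. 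Among the finitely many Borel subgroups of $\GZ$ containing $T$ (all defined and flat over $\R$), choose $B$ to minimise $\dim(\grs(B) \cap B)$. Finally, $B$ and $B_M$ are conjugate by an element of $\GZ(\R)$ by the transitivity of $\GZ(\R)$ on pairs consisting of a split maximal torus together with a Borel containing it in the split reductive $\R$-group scheme $\GZ$.

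The chief technical obstacle is the combinatorial step of selecting the strongly orthogonal system $\{\alpha_1, \ldots, \alpha_\ell\}$ compatibly with $\R$-rationality and producing the $\R$-split anisotropic tori $S_i$ inside each $G_i$. In the type-II cases, where $\grs$ permutes simple roots nontrivially, $G_i$ must be taken as a product of two root-$SL_2$'s swapped by $\grs$, and the corresponding $\R$-rational anisotropic subtorus requires a separate but analogous computation; this is precisely where the slight strengthening beyond \cite{DS} resides.
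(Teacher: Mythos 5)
Your construction is, at bottom, the same Cayley-transform argument the paper uses, only written out ``unconjugated''. The paper takes a maximal strongly orthogonal set $\calB$ of roots $\grb$ with $\grs(\grb)=\grb$ and $\grs(u_\grb(t))=u_\grb(-t)$ (your noncompact imaginary roots) and conjugates $M$ by $g_\calB=\prod_\grb u_\grb(1)u_{-\grb}(-1/2)$; the whole point is that this particular Cayley element lies in $\GZ(\R)$, so your opening premise that passing from $M$ to an anisotropic torus ``typically requires scalars outside $\R$'' is exactly what the paper's choice of $g_\grb$ refutes. Indeed your rank-one torus $S_i$, parametrised by $\lambda\mapsto \tfrac{\lambda+\lambda^{-1}}{2}\id+\tfrac{\lambda-\lambda^{-1}}{2}(e_{\alpha_i}+f_{\alpha_i})$, is literally $g_{\alpha_i}M_{\alpha_i}g_{\alpha_i}^{-1}$ for $g_{\alpha_i}=u_{\alpha_i}(1)u_{-\alpha_i}(-1/2)$, and your $T=S\cdot M_0$ is the paper's $g_\calB M g_\calB^{-1}$. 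Both arguments defer the same hard input --- that a maximal strongly orthogonal system of such roots has $\ell$ elements and yields a maximal anisotropic torus --- to \cite{Knapp} \S VI.7. (Two side remarks: your claim that $\grs$ acts trivially on $M_0$ is fine but rests on that same maximality input; and the ``type-II'' pairs of complex roots are not needed when one starts from a maximally isotropic $M$, which is what condition i) in the paper's proof encodes.)

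The one step where you genuinely diverge is the final conjugacy claim, and it is also the soft spot. The paper gets it almost for free: $g_\calB B_M g_\calB^{-1}$ is a Borel containing $T$, any two Borels containing $T$ differ by an element of $N_G(T)$, and every Weyl group element has a representative in $\GZ(\R)$ by Lemma 2.7 of \cite{DS}. You instead invoke transitivity of $\GZ(\R)$ on pairs consisting of a split maximal torus and a Borel containing it. Over a field this is standard; over $\R=\mZ[1/2]$ it is true but not free --- the transporter between two Borels is a torsor under $B$, and the transporter between two split maximal tori of $B$ is a torsor under $T$, so one needs $H^1(\Spec\R,B)=H^1(\Spec\R,T)=0$, which follows from $\Pic(\mZ[1/2])=0$ together with the filtration of the unipotent radical by copies of $\mG_a$. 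This is exactly the kind of integrality statement the lemma is meant to pin down, so it cannot simply be asserted; by discarding the explicit conjugator $g_\calB$ you make this last step strictly harder than the paper's version.
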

\begin{proof}
  For each root $\grb \in \Psi$ denote by $u_\grb(t)$ the
  corresponding one parameter subgroup. This subgroup can be defined
  over $\R$.  We construct (see \cite{Knapp} \S VI.7) the torus $T$ as
  follows . Let $\calB\subset \Psi^+$ be a set of roots maximal among
  the subsets with the following properties
  \begin{enumerate}[\indent i)]
  \item $\grb \in \calB$ implies $\grs(\grb)=\grb$ and
    $\grs(u_{\grb}(t))=u_\grb(-t)$;
  \item $\grb, \grb' \in \calB$ implies $\grb+\grb', \grb-\grb' \notin
    \Psi$.
  \end{enumerate}
  For each $\grb \in \calB$ set
  $$g_\grb=u_{\grb}(1)u_{-\grb}(-1/2), 
  \mand \quad g_\calB = \prod _{\grb \in \calB}g_\grb.$$ Notice that 
  since by ii) the  roots  in $ \calB$  are orthogonal to  each other, 
  the elements $g_{\beta}$ as $\beta$ runs in $\mathcal B$ commute and   
  $g_\calB$ is well defined and lies in $\GZ (\R )$. 
  
  We then set $T=g_\calB M g_\calB^{-1}$ and $S=g_\calB M_\calB
  g_\calB^{-1}$ where $M_\calB$ is the subtorus of $T$ corresponding
  to the coroots in $\calB$. By \cite{Knapp} \S VI.7 $T$ and $S$ have
  all the required properties.

  Our claims about the root decomposition now follows from the
  analogous properties for the torus $M$, and under this hypothesis it
  is clear that each Borel containing $T$ is defined and flat over
  $\R$.  Also notice that $g_\calB B_M g_\calB^{-1}$ is a Borel
  containing $T$ so it must be conjugated to $B$ by an element in
  $N_G(T)$. Now by Lemma 2.7 in \cite{DS} every element of the Weyl
  group has a representative in $\GZ(\R)$ proving the last claim.
\end{proof}

We finish this section with a simple Lemma regarding invariants.
Since we are going to deal with non necessarily reduced algebraic
groups let us recall that if $L$ is a not necessarily reduced
algebraic group and $V$ is a representation of $L$, a $L$ invariant
vector $v \in V$ is a vector whose image under the coaction $V\lra
V\otimes \mk[L]$ is $v \otimes 1$.

\begin{lem}\label{lem:invsuZ}
  Let $\mbL$ be an algebraic group scheme defined and flat over $\R$
  (we do not assume that $\mbL$ is either connected or reduced in
  general) and let $V$ be a finite dimensional representation of
  $\mbL$ defined and flat over $\R$. Assume that
  $V(\mC)^{\mbL(\mC)}\neq 0$.  Then there is an $\mbL$ invariant
  vector defined over $\R$ whose reduction modulo $p$ is different
  from $0$ for all odd primes $p$. In particular
  $V(\mk)^{\mbL(\mk)}\neq 0$.
\end{lem}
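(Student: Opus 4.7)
The plan is to study the $\R$-module of invariants $V(\R)^{\mbL}$ directly, show that it is a free direct summand of $V(\R)$, verify it is nonzero by flat base change to $\mC$, and then pick any one of its basis vectors.

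By definition, $V(\R)^{\mbL}$ is the kernel of the $\R$-linear map $\phi\colon V(\R)\to V(\R)\otimes_\R \R[\mbL]$, $v\mapsto \rho(v)-v\otimes 1$, where $\rho$ is the coaction. Since $\mbL$ is flat over $\R$, the Hopf algebra $\R[\mbL]$ is a flat $\R$-module, and hence so is $V(\R)\otimes_\R \R[\mbL]$. Over the PID $\R=\mZ[1/2]$, flatness is equivalent to torsion-freeness, so the image of $\phi$ is torsion-free, and consequently the finitely generated quotient $V(\R)/V(\R)^{\mbL}$ is torsion-free, hence free. Therefore the short exact sequence
\[
0\lra V(\R)^{\mbL}\lra V(\R)\lra V(\R)/V(\R)^{\mbL}\lra 0
\]
splits, realizing $V(\R)^{\mbL}$ as a free direct summand of $V(\R)$.

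Next I would exploit that taking the above kernel commutes with flat base change: tensoring the defining sequence of $V(\R)^{\mbL}$ with the flat $\R$-algebra $\mC$ yields the identification $V(\R)^{\mbL}\otimes_\R \mC\isocan V(\mC)^{\mbL(\mC)}$, which is nonzero by hypothesis. Thus $V(\R)^{\mbL}$ has positive rank; pick any basis vector $v$. Functoriality of the coaction under base change $\R\to A$ gives $\rho_A(v\otimes 1)=(v\otimes 1)\otimes 1$ in $V(A)\otimes_A A[\mbL]$, so $v$ is an $\mbL$-invariant vector defined over $\R$. Moreover, because $V(\R)^{\mbL}$ is a direct summand of $V(\R)$, for every odd prime $p$ the reduction map
\[
V(\R)^{\mbL}/p\,V(\R)^{\mbL}\incluso V(\R)/p\,V(\R)=V(\mF_p)
\]
is injective, and $v$ maps to a nonzero basis element on the left; this gives the required nonvanishing modulo $p$. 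The final ``in particular'' is then immediate: for $\mk$ of characteristic zero we already have $V(\R)^{\mbL}\hookrightarrow V(\mk)^{\mbL(\mk)}$, while for $\mk$ of odd characteristic $p$ the image of $v$ in $V(\mk)=V(\mF_p)\otimes_{\mF_p}\mk$ is a nonzero $\mbL(\mk)$-invariant vector.

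The only delicate step is establishing the torsion-freeness of $V(\R)/V(\R)^{\mbL}$, which is precisely where the flatness of $\mbL$ over $\R$ enters. Without it, this quotient could carry $p$-torsion for some odd $p$, and no single basis vector of the invariants could control all reductions simultaneously; the fact that $\R=\mZ[1/2]$ is a PID is what converts ``flat'' into ``free direct summand'' and makes the one-vector argument work.
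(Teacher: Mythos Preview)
Your proof is correct and follows essentially the same approach as the paper: both identify the invariants as the kernel of the map $v\mapsto \rho(v)-v\otimes 1$, use the torsion-freeness of the target (coming from flatness of $\mbL$ over the PID $\R$) to conclude that this kernel is a direct summand of $V(\R)$, and then use flat base change to $\mC$ to see it has positive rank. Your write-up is somewhat more explicit about where the flatness hypothesis enters and why the direct-summand property controls all reductions at once, but the logical skeleton is identical.
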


\begin{proof} Let $V_\R$ be an $\R$ lattice compatible with the
  action. The action of $\mbL$ on $V$ is given by the coaction map
  $a^\sharp: V_\R \lra \R[\mbL] \otimes_\R V_\R$.  If $B$ is a $\R$
  algebra, set $a^\sharp_B=\id_B\otimes_\R a^\sharp$.  Thus an element
  $v$ in $V(B):=B\otimes_{\R}V_{\R}$ is fixed by $\mbL$ if
  $a^\sharp_B(v)= 1\otimes v$.  Let now $F:V_\R\lra \R[\mbL]
  \otimes_\R V_\R$ be given by $F(v)=a^\sharp (v) - 1\otimes v$ and
  $F_B = \id_B \otimes F$.  $V(B)^\mbL= \ker F_B$. In particular
  notice that when $B$ is a field of characteristic zero we have,
  since $V_\R$ is a free $\R$ module, that $\ker F_B= B\otimes_\R \ker
  F$. In particular $V(\mC)^{\mbL(\mC)}=\mC\otimes_\R \ker F$ is
  defined over $\R$.  Moreover since also $\R[\mbL]\otimes_\R V_\R$
  has no torsion, we have that if $n \in \mZ\senza\{0\}$, $v\in V_\R$
  and $nv\in \ker F$ then $v\in \ker F$. So $\ker F$ is a direct
  summand of $V_\R$.
  
  It follows that $\mk\otimes_\R \ker F$ injects into a non zero
  subspace of $V(\mk)^{\mbL(\mk)}$ proving our claim.
\end{proof}

\subsection{Spherical weights and the restricted root system}\label{ssec:sferici}

We want to describe now the Weyl modules of $G$ which have a non zero
$H$ invariant vector.

If $A$ is a torus we denote with $\grL_A$ its character lattice
$\Hom(A,\mk^*)$.  Given a surjective homomorphism $A\lra B$ between
tori, we are going to consider $\grL_{B}$ as a sublattice of $\grL_A$.

Let $\grL=\grL_T$ and let $r:\grL \lra \grL_S$ be the surjective
homomorphism induced by the inclusion $S\subset T$. Let also $\Phi$ be
the root system of $\gog$ with respect to $T$, $\Phi^+$ (resp.
$\Delta$) be the choice of positive roots (resp. the simple roots)
corresponding to the Borel $B$ and $\grL^+$ be the dominant weights
with respect to $B$.

Every character $\grl$ of $T$ extends uniquely to a one dimensional
character of $B$ and we define $\calL_{\grl}$ as the line bundle $G
\times_B \mk_{-\grl}$ on $G/B$. Every line bundle on $G/B$ is
isomorphic to a line bundle of this form. For $\grl \in \grL^+$ the
\emph{Weyl module} $V_\grl$ is defined as the dual of the space of
sections $\grG(G/B,\calL_\grl)$.  With the choices of the previous
section, all these objects are defined over $\R$. Furthermore it is
well know \cite{Jantzen} that $\grG(G/B,\calL_\grl)$ and hence
$V_{\lambda}$ is flat over $\R$ . Occasionally we will have to
consider also line bundles on a partial flag variety $G/P$ where $P$
is a parabolic subgroup containing $B$. The natural projection $G/B\to
G/P$ induces an inclusion of Pic$(G/P)$ in $\Pic(G/B)=\Lambda$ and
allows us to identify $\Pic(G/P)$ with the sublattice $\grL_P$ of
$\Lambda$ consisting of those characters $\lambda$ of $B$ which extend
to $P$.  For $\lambda\in \Pic(G/P)$ we are going, by abuse of
notation, to denote by $\calL_\grl$ the line bundle $G \times_P
\mk_{-\grl}$ on $G/P$.

We define the monoid of dominant $H$ \emph{spherical weights} as
$$
\Omega^+_H = \big\{ \grl \in \grL^+ \st \grG(G/B,\calL_\grl)^H \neq 0
\big\}
$$
and the lattice of spherical weights $\Omega_H$ as the lattice
generated by $\Omega_H^+$. We set also $\Omega = \Omega_{\Hz}$ and
$\Omega^+=\Omega_{\Hz}^+$.

Recall that, since $H$ has an open orbit in $G/B$, if $\grl \in
\grL^+$ then the space of $H$ invariant sections
$\grG(G/B,\calL_\grl)^H$ is at most one dimensional. A non zero vector
in $\grG(G/B,\calL_\grl)^H$ will be called a \emph{spherical vector}.

Let us now give a description of $\Omega$. In characteristic zero
$\Omega$ has been described by Helgason \cite{Helg} using analytic
methods. An algebraic proof of these results has been given by Vust
\cite{Vust}.  The Theorem of Vust is stated in characteristic zero but
its proof can be used verbatim in any characteristic different from 2
once we replace $V_\grl$ with $V^*_\grl$. Moreover Vust's proof can
also be easily adapted to describe the lattice $\Omega_H$. Let $S_H =
S/S\cap H$ then we have the following Theorem.

\begin{teo}[Vust \cite{Vust} Th\'eor\`eme 3]\label{teo:Vust}
  Let $\grl \in \grL^+$ then $\grl \in \Omega^+_H$ if and only if
  $\grs(\grl)=-\grl$ and $r(\grl)\in \grL_{S_H}$.
\end{teo}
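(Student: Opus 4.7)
The plan is to follow Vust's original proof (\cite{Vust}, Th\'eor\`eme 3) verbatim, with the two small adjustments signaled by the authors: we replace $V_\grl$ by its dual $V_\grl^* = \grG(G/B, \calL_\grl)$ throughout, and we extend the analysis from $\Hz$ to an arbitrary (possibly non-reduced) subgroup $H$ between $\Hz$ and $\Had$.

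For the necessity direction, let $v \in \grG(G/B, \calL_\grl)^H$ be non-zero. Since $\dim(B \cap \grs(B))$ is minimal, the $\Hz$-orbit of $eB$ is open in $G/B$, so $v(eB) \neq 0$. The stabilizer of $eB$ in $H$ is $H \cap B$; it must act trivially on the fiber $\mk_{-\grl}$, giving $\grl|_{H \cap B} = 1$. Since any character is trivial on the unipotent radical of $B$, this reduces to $\grl|_{H \cap T} = 1$. Decomposing $T$ up to finite kernel as the almost-direct product of $T^\grs$ and $S$, triviality of $\grl$ on $(T^\grs)^\circ \subset \Hz \subset H$ is equivalent to $\grs(\grl) = -\grl$, while triviality on $S \cap H$ is equivalent to $\grl|_S$ factoring through $S \to S_H$, i.e., to $r(\grl) \in \grL_{S_H}$.

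For the sufficiency direction, given $\grl$ satisfying both conditions, Vust's construction (in the dualized form) produces an $\Hz$-invariant section of $\calL_\grl$; the finite group scheme $H/\Hz$ then acts on this one-dimensional invariant space by a character, and the condition $r(\grl) \in \grL_{S_H}$ (refining the $\Hz$-spherical condition $r(\grl) \in \grL_{S_{\Hz}}$) is precisely what kills this character, yielding $H$-invariance. In positive characteristic, where $H/\Hz$ may be non-reduced, the descent to invariance under a non-reduced group scheme is where Lemma \ref{lem:invsuZ} and the $\R$-forms of Section \ref{ssec:Z} intervene: they reduce the existence statement to the complex case, covered by Helgason's classical theorem.

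The main obstacle is therefore the extension from the connected reduced group $\Hz$ to the possibly non-reduced $H$, together with the passage to positive characteristic. Both technicalities are handled by the descent machinery of Section \ref{ssec:Z}: the flatness over $\R$ of $\grG(G/B, \calL_\grl)$, together with the $\R$-structures on $G$ and $H$, lets Lemma \ref{lem:invsuZ} transfer the complex result to any characteristic different from $2$ without revisiting the combinatorial core, namely the identification of $\Omega^+_{\Hz}$, which is a direct transcription of Vust's argument with $V_\grl$ replaced by $V_\grl^*$.
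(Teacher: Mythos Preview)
The paper does not supply its own proof of this theorem: it simply cites Vust and, in the paragraph immediately preceding the statement, remarks that Vust's argument ``can be used verbatim in any characteristic different from 2 once we replace $V_\grl$ with $V^*_\grl$'' and ``can also be easily adapted to describe the lattice $\Omega_H$.'' Your outline of the necessity direction and of Vust's construction for $\Hz$ is in line with this, so on that level you are doing exactly what the paper intends.

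Where you diverge is in the sufficiency direction for general $H$ and positive characteristic. You invoke Lemma~\ref{lem:invsuZ} and the $\R$-forms of Section~\ref{ssec:Z} to reduce the existence of an $H$-invariant to the complex case. The paper, by contrast, asserts that Vust's direct argument already works in every characteristic $\neq 2$; no reduction to characteristic zero is needed for $V_\grl^* = \grG(G/B,\calL_\grl)$. The descent machinery of Lemma~\ref{lem:invsuZ} is reserved for Corollary~\ref{cor:invV}, which is the dual statement about invariants in $V_\grl$ itself---and that corollary logically comes \emph{after} Theorem~\ref{teo:Vust} and uses it. So while your route is not wrong, it introduces a circularity risk (or at least an unnecessary detour): once Vust's argument has produced the $\Hz$-invariant line in $\grG(G/B,\calL_\grl)$, uniqueness (sphericity of $\Hz$) makes that line $H$-stable, and the condition $r(\grl)\in\grL_{S_H}$ directly forces the $H/\Hz$-character to be trivial, with no appeal to flat $\R$-models. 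That is the ``easy adaptation'' the paper has in mind.
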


We will need also to study quasi invariants under the action of
$\Had$, so we define a dominant weight $\grl$ to be
\emph{quasi spherical} if the representation $\grG(G/B,\calL_\grl)$
has a line fixed by $\Had$. We denote the monoid of quasi spherical
dominant weights by $\Pi^+$ and we set $\Pi$ equal to the sublattice
spanned by $\Pi^+$ and call it the lattice of quasi spherical weights.

Quasi spherical weights have been described in terms of spherical
weights and exceptional roots by De Concini and Springer in \cite{DS}.

Let $\Phi_0$ (resp. $\grD_0$) be the set of roots (resp. simple roots)
fixed by $\grs$ and let $\Phi_1$ (resp. $\grD_1$) be the complement of
$\Phi_0$ in $\Phi$ (resp. of $\grD_0$ in $\grD$). With our choices of
the Borel subgroup $B$ we have $\grs(\gra) \in \Phi^-$ for all $\gra
\in \Phi_1^+=\Phi_1\cap\Phi^+$ (see \cite{DP1}). Moreover the
involution $\grs$ induces an involution $\bar \grs$ of $\grD_1$ where
$\bar \grs (\gra)$ is the unique simple root such that $\grs(\gra)
+\bar \grs (\gra)$ lies in the span of $\grD_0$.  A simple root $\gra
\in \grD_1$ is said to be \emph{exceptional} if $\bar\grs(\gra)\neq
\gra$ and $\kappa(\grs(\gra),\gra)\neq 0$, $\kappa$ being a non
degenerate bilinear form on $\grL$ invariant under the action of the
Weyl group.  We denote by $\{\om_\gra\}_{\gra \in \grD}$, the
fundamental weights with respect to the simple basis $\grD$.  We have,

\begin{teo}[De Concini and Springer \cite{DS} Lemma 4.6
  and Theorem 4.8]\label{teo:DS}\hfill

  \begin{enumerate}[\indent i)]
  \item For each $\grl \in \Pi^+$ the line fixed by $\Had$ is
    unique.
  \item $\Pi^+$ is generated as a monoid by $\Omega^+$ and the
    fundamental weights $\om_\gra$ corresponding to the exceptional
    roots.
  \end{enumerate}
\end{teo}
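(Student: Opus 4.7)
The plan is to reduce both parts to the analysis of characters of $\Had$, exploiting the sphericity of $\Hz$ (its dense orbit on $G/B$) and the flatness setup of \S\ref{ssec:Z} to handle positive characteristic.

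For part (i), let $L \subset \grG(G/B,\calL_\grl)$ be a line fixed by $\Had$; then $\Had$ acts on $L$ through some character $\chi$. The line $L$ is determined by the restriction $\chi|_\Hz$: any two $\Hz$-semi-invariants with the same character have ratio an $\Hz$-invariant rational function on $G/B$, which is constant because $\Hz$ has a dense orbit. To pin $\chi|_\Hz$ down, I would choose a base-point $x_0$ in the open $\Hz$-orbit; by the local structure of spherical subgroups (using the Borel $B$ of \S\ref{sec:prel} with minimal $\grs$-displacement) the stabilizer $\Hz_{x_0}$ contains a maximal torus of $\Hz$, so characters of $\Hz$ are detected on $\Hz_{x_0}$. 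Since $\Hz_{x_0}$ acts on the fibre $\mk_{-\grl}$ through the restriction of $-\grl$, this forces $\chi|_\Hz$, hence $L$, to be unique.

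For part (ii), the inclusion $\langle\Omega^+, \om_\gra : \gra \text{ exceptional}\rangle \subseteq \Pi^+$ amounts to exhibiting, for each exceptional $\gra$, an $\Had$-eigenvector in $\grG(G/B,\calL_{\om_\gra})$. I would construct this explicitly over $\mC$ using the highest weight vector $v_{\om_\gra}$ modified by a root-vector correction; the exceptional condition $\kappa(\grs(\gra),\gra) \neq 0$ is precisely what allows the combination to be $\grs$-semi-invariant, and invariance under the component group of $\Had$ is then checked directly. For the reverse inclusion, given $\grl \in \Pi^+$ I would invoke Vust's Theorem \ref{teo:Vust}, which characterises $\Omega^+$ by $\grs(\grl) = -\grl$ and $r(\grl)\in \grL_{S_H}$. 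Expanding $\grl$ in the fundamental weight basis and tracking the action of $\bar\grs$ on $\grD_1$, for a non-exceptional $\gra \in \grD_1$ the sum $\om_\gra + \om_{\bar\grs(\gra)}$ already satisfies Vust's criterion and so stays in $\Omega^+$, whereas the coefficients of the exceptional $\om_\gra$'s must be peeled off individually, yielding $\grl = \grl_0 + \sum_{\gra\ \text{exc.}} n_\gra \om_\gra$ with $\grl_0 \in \Omega^+$.

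In positive characteristic the above constructions are transported from $\mC$ via the flatness setup: the $\Had$-stable line in characteristic zero yields, through Lemma \ref{lem:invsuZ} applied after twisting so that the action becomes genuinely invariant, a non-zero $\Had$-semi-invariant in every characteristic $\neq 2$. The main obstacle is part (ii): one must both verify that every exceptional $\om_\gra$ lies in $\Pi^+$, which hinges on the explicit construction using $\kappa(\grs(\gra),\gra) \neq 0$, and rule out further generators of the monoid, which requires careful tracking of $\grs$ and $\bar\grs$ on $\grD_1$ together with the uniqueness from part (i) to identify the character of $\Had$ associated to each line.
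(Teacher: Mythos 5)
The paper offers no proof of Theorem \ref{teo:DS}: it is quoted from De Concini--Springer \cite{DS} (Lemma 4.6 and Theorem 4.8), so your argument can only be judged on its own terms, and part (i) contains a step that fails. You correctly reduce to showing that the $\Hz$-character $\chi$ of an $\Had$-stable line is forced (two $\Hz$-semiinvariants with the same character do have constant ratio by density of the open orbit), but you propose to force $\chi$ by restricting to the stabilizer $\Hz_{x_0}=\Hz\cap B$ of a point of the open orbit, claiming this stabilizer contains a maximal torus of $\Hz$ and hence detects characters of $\Hz$. That is false: the Borel $B$ of Section \ref{sec:prel} is chosen so that $\dim B\cap\grs(B)$ is minimal, hence contains the maximally \emph{anisotropic} torus $T\supset S$, and $\Hz\cap B$ is correspondingly small. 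Already for $G=SL_2$ with $\grs$ equal to conjugation by $\mathrm{diag}(i,-i)$ one has $\Hz=T_{\mathrm{diag}}\cong\mG_m$ while $\Hz\cap B$ is finite, so restriction to $\Hz\cap B$ is far from determining $\chi$; indeed $V_{n\om}^*$ has $n+1$ distinct $\Hz$-stable lines (all the weight lines), and the unique $\Had$-stable one (the zero-weight line, for $n$ even) is singled out only because the nontrivial component of $\Had=N_G(T_{\mathrm{diag}})$ acts on $X^*(\Hz)$ by $-1$ and forces $\chi$ to be trivial. Uniqueness is genuinely a statement about $\Had$, not about the spherical subgroup $\Hz$, and a correct proof must use the extra symmetry of $\Had/\Hz$ --- for instance via the decomposition $\Had=\Hz\cdot(\Had\cap S)$ and the action of $\Had\cap S$, or the invariance of $\chi$ under conjugation by $\Had$ --- which is how \cite{DS} proceeds.

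Part (ii) as written is a plan rather than a proof, and the two places you flag as ``the main obstacle'' are exactly where all the content sits. The existence of an $\Had$-semiinvariant in $\grG(G/B,\calL_{\om_\gra})$ for exceptional $\gra$ is not obtained in \cite{DS} by a ``root-vector correction'' of the highest weight vector but by a factorization argument starting from the spherical vector of weight $\tom_\tal=\om_\gra+\om_{\bar\grs(\gra)}$, and the role of the condition $\kappa(\grs(\gra),\gra)\neq 0$ has to be made precise, not just invoked. In the reverse inclusion you must also explain why the coefficients of $\grl$ along $\grD_0$ vanish and, more seriously, why the non-exceptional residue satisfies the integrality conditions in Vust's criterion (Theorem \ref{teo:Vust}) and hence actually lies in $\Omega^+$ rather than merely in $\frac12\Omega^+$; neither point is addressed. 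The reduction to positive characteristic via Lemma \ref{lem:invsuZ} after twisting by the character is sound and matches what the paper itself does in Corollary \ref{cor:invV}, but it presupposes parts (i) and (ii) over $\mC$, which your sketch does not yet establish.
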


The set of spherical weights is related to the \emph{restricted root
  system} as follows.  Let us quickly recall how restricted roots are
defined. If $\gra \in \Phi$ is not fixed by $\grs$ we define the
\emph{restricted root} $\tal$ as $\gra - \grs(\gra)$ and the
restricted root system $\tPhi \subset \grL$ as the set of all
restricted roots.  This is a (not necessarily reduced) root system
(see \cite{Rich}) of rank $\ell$ and the subset $\tPhi^+$ (resp.
$\tDe$) of restricted roots $\tal$ with $\gra$ positive (resp. $\gra$
simple) is a choice of positive roots (resp. a simple basis) for
$\tPhi$. 

We collect in the following lemma some known and easy consequences of
the previous theorems.

\begin{lem}\label{lem:sferici}\hfill

  \begin{enumerate}[\indent i)]
  \item $\Pi\cap \grL^+ = \Pi^+$ and $\Omega_H \cap
    \grL^+=\Omega_H^+$;
  \item In the adjoint case we have $\Omega_{\Had} = \mZ[\tPhi]$;
  \item In the simply connected case we have$$\Omega = \{\grl \in \grL
    \st \grs(\grl)=-\grl \mand
    \tfrac{2\kappa(\grl,\tal)}{\kappa(\tal,\tal)}\in \mZ \mforall \tal
    \in \tPhi\};$$  \item If $\grl \in \grL$ and $n\grl \in \Omega$ for some positive
    natural number $n$ then $\grs(\grl)=-\grl$;
  \item The restriction of $r$ to $\Omega_H$ is injective and
    $r(\Omega_H)=\grL_{S_H}$.
  \end{enumerate}
\end{lem}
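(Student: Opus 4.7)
The plan is to derive a lattice-level refinement of Vust's theorem (Theorem~\ref{teo:Vust}) and then deduce each of the five statements from it, using Theorem~\ref{teo:DS} to handle $\Pi$ in (i). The central preliminary step is the identification
\begin{equation*}
\Omega_H=\{\grl\in\grL \st \grs(\grl)=-\grl \mand r(\grl)\in\grL_{S_H}\}.
\end{equation*}
The inclusion ``$\subset$'' is immediate since both conditions on the right are preserved under $\mZ$-linear combinations of elements of $\Omega_H^+$. For the reverse, I would fix $\grm\in\Omega_H^+$ that is strictly dominant on every simple root in $\grD_1=\grD\senza\grD_0$; such a $\grm$ exists because $\grs$-anti-invariance forces $\langle\grl,\gra^\vee\rangle=0$ for $\gra\in\grD_0$, so only dominance on $\grD_1$ matters and Theorem~\ref{teo:Vust} easily supplies such a weight. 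Then for $\grl$ in the right-hand side and $n\gg 0$, the weight $n\grm+\grl$ is dominant and still satisfies the Vust conditions, so it lies in $\Omega_H^+$; subtracting $n\grm$ gives $\grl\in\Omega_H$.

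Parts (i) and (iv) follow at once. For (i), a dominant element of $\Omega_H$ fulfills the two Vust conditions, hence lies in $\Omega_H^+$; the same argument with the generators of $\Pi^+$ furnished by Theorem~\ref{teo:DS} gives the $\Pi$-statement. For (iv), $n\grl\in\Omega$ yields $\grs(n\grl)=-n\grl$, and torsion-freeness of $\grL$ gives $\grs(\grl)=-\grl$.

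Claim (v) I would split into injectivity and surjectivity. If $\grl\in\Omega_H$ has $\grl|_S=0$, the equation $\grs(\grl)=-\grl$ restricted to $T^\grs$ gives $2\grl|_{(T^\grs)^\circ}=0$, hence $\grl|_{(T^\grs)^\circ}=0$ by torsion-freeness of the character lattice of the connected torus $(T^\grs)^\circ$; combined with $(T^\grs)^\circ\cdot S=T$ (their Lie subalgebras being complementary in $\got$), we conclude $\grl=0$. For surjectivity, the inclusion $S\cap\Hz=S[2]\subset S\cap H$ forces $\grL_{S_H}\subset 2\grL_S$; since the restriction map $\grL\to\grL_S$ is surjective (tori over an algebraically closed field being divisible), any $\grm\in\grL_S$ admits a lift $\grl'\in\grL$, and then $\grl'-\grs(\grl')\in\grL^{-\grs}$ restricts to $2\grm$, so $2\grL_S\subset r(\grL^{-\grs})$; combining, every element of $\grL_{S_H}$ is the restriction of an anti-invariant integer weight, which by the lattice description of $\Omega_H$ can be chosen in $\Omega_H$.

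Finally, (ii) and (iii) are obtained by specialising the lattice description to the two extremes $H=\Hz$ and $H=\Had$ and computing $\grL_{S_H}$ concretely. In the simply-connected case $H=\Hz$, $S\cap\Hz=S[2]$ gives $\grL_{S_{\Hz}}=2\grL_S$; using $r(\tal)=2\,r(\gra)$ and the identification of $\grL_S^\vee$ with the $\mZ$-span of the restricted coroots, the condition $r(\grl)\in 2\grL_S$ on anti-invariant $\grl$ translates exactly into $\tfrac{2\kappa(\grl,\tal)}{\kappa(\tal,\tal)}\in\mZ$ for all $\tal\in\tPhi$, giving (iii). In the adjoint case $H=\Had$, a direct computation of $S\cap\Had$ and of $\grL_{S_{\Had}}$ shows that the anti-invariants of $\grL$ restricting into $\grL_{S_{\Had}}$ form precisely $\mZ[\tPhi]$, yielding (ii). The main technical obstacle I anticipate is the explicit determination of $\grL_{S_{\Had}}$ (since $\Had$ may be non-reduced), together with the careful matching of the resulting restriction condition with the root lattice of the restricted root system $\tPhi$.
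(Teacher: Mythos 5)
The paper offers no proof of this lemma at all: it is introduced as ``some known and easy consequences of the previous theorems'', so there is no argument of the authors' to compare yours against, and I can only judge the proposal on its own terms. Your central move --- the lattice-level identification $\Omega_H=\{\grl\in\grL\st\grs(\grl)=-\grl \mand r(\grl)\in\grL_{S_H}\}$, obtained by adding to $\grl$ a large multiple of a weight $\mu\in\Omega_H^+$ that is strictly dominant on $\grD_1$ (dominance on $\grD_0$ being automatic for anti-invariant weights) and then applying Theorem \ref{teo:Vust} --- is correct and is the natural way to bootstrap Vust's criterion from the monoid to the lattice. The deductions of the $\Omega_H$ half of (i), of (iv), and of (v) are sound; in particular the injectivity argument via $T=(T^\grs)^\circ\cdot S$ and the surjectivity argument via $\grL_{S_H}\subset 2\grL_S\subset r\bigl(\{\grl-\grs(\grl)\}\bigr)$ are essentially complete.

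Two places are genuinely thinner than you indicate. First, the $\Pi$ half of (i) does not follow by ``the same argument'': Theorem \ref{teo:DS} describes $\Pi^+$ by monoid generators ($\Omega^+$ together with the $\om_\gra$ for $\gra$ exceptional), not as ``dominant weights satisfying a lattice condition'', and it is the latter form that your bootstrap requires. Concretely, writing a dominant $\grl\in\Pi$ as $\omega+\sum_{\gra}c_\gra\om_\gra$ with all $c_\gra\geq 0$ (after trading $\om_{\bar\grs(\gra)}$ for $\tom_\tal-\om_\gra$), the leftover $\omega\in\Omega$ need not be dominant, so Vust cannot be invoked to place it in $\Omega^+$; some additional input (a Vust-type criterion for $\Had$-eigenlines, or a correction by restricted fundamental weights) is needed to close this. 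Second, (ii) and (iii) are reduced to computing $\grL_{S_{\Hz}}$ and $\grL_{S_{\Had}}$ and matching the resulting restriction condition with $\mZ[\tPhi]$, respectively with the integrality condition $2\kappa(\grl,\tal)/\kappa(\tal,\tal)\in\mZ$. The first step ($S\cap\Hz=S[2]$, hence $\grL_{S_{\Hz}}=2\grL_S$) is right, but the translation into restricted coroots --- which is exactly where simple connectedness enters, through the identification of the cocharacter lattice of $S$ with the span of the restricted coroots --- is asserted rather than proved, and the adjoint case is left entirely open, as you yourself acknowledge. Since these are precisely the Helgason--Vust--De Concini--Springer computations the authors are implicitly citing, the omission is understandable, but as a self-contained proof the write-up is incomplete exactly there.
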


In particular, by $iii)$, $\Omega^+$ is the set of dominant weights of
the root system $\tPhi$, so it is a free monoid of rank $\ell$ and a
basis of it is given by fundamental weights $\tom_\tal$ with respect
to $\tDe$. Notice   that if $\gra$ is exceptional also
$\grb=\bar\grs(\gra)$ is exceptional. If this is the case, we shall call  $\tal \in \tDe$
  an exceptional restricted simple root  and we recall that $\tom_\tal
= \omega_\gra + \omega_\beta$.

Finally we apply Lemma \ref{lem:invsuZ} to our situation.

\begin{cor}\label{cor:invV}
  If $\grl \in \Omega^+_H$ then $V_\grl$ has a nonzero vector fixed by
  $H$ and if $\grl \in \Pi^+$ then $V_\grl$ has a line fixed by
  $\Had$. More precisely there is a vector of $V_\grl$ defined over
  $\R$ whose reduction modulo any odd prime is different from $0$ and
  fixed by $H$ (respectively spans a line fixed by $\Had$).
\end{cor}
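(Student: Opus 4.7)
The plan is to reduce both statements to characteristic zero and then invoke Lemma \ref{lem:invsuZ}. All the required $\R$-structures are already in place: Subsection \ref{ssec:Z} ensures that $H$ and $\Had$ are defined and flat over $\R$, and the remark just before Theorem \ref{teo:Vust} does the same for $\grG(G/B,\calL_\grl)$, hence also for $V_\grl=\grG(G/B,\calL_\grl)^*$. I therefore only need to verify the hypothesis $V_\grl(\mC)^{\mbL(\mC)}\neq 0$ of that lemma in the two cases.

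For $\grl\in\Omega_H^+$ I will take $\mbL=H$. Theorem \ref{teo:Vust} supplies a nonzero $H$-invariant vector in $\grG(G/B,\calL_\grl)(\mC)$. In characteristic zero $H$ is linearly reductive because its identity component $\Hz$ is reductive and $H/\Hz$ is finite, so the functor of $H$-invariants is exact and $\dim V^H=\dim (V^*)^H$ for every finite-dimensional rational $H$-module $V$. Thus $V_\grl(\mC)^{H(\mC)}\neq 0$, and Lemma \ref{lem:invsuZ} then delivers the first assertion verbatim.

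For $\grl\in\Pi^+$ the same reductivity argument transports the unique $\Had$-stable line in $\grG(G/B,\calL_\grl)(\mC)$ produced by Theorem \ref{teo:DS}(i) to a unique $\Had(\mC)$-stable line in $V_\grl(\mC)$, on which $\Had$ acts by some character $\chi$. Uniqueness forces $\chi$ to be defined over $\R$. Since the lemma produces honest invariants rather than quasi-invariants, I will apply it to the twisted representation $V_\grl\otimes\mk_{-\chi}$ as an $\Had$-module; untwisting the resulting $\R$-defined invariant vector yields the required $\Had$-stable line in $V_\grl$ with nonzero reductions modulo every odd prime.

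The only delicate point is that Vust and De Concini--Springer produce invariants in the sections $\grG(G/B,\calL_\grl)$, whereas the corollary concerns the dual $V_\grl$. The reductivity of $\Hz$ makes this transition painless in characteristic zero, but the argument is genuinely characteristic-zero and cannot be replayed in positive characteristic; this is precisely why Lemma \ref{lem:invsuZ} is essential for transporting the conclusion to arbitrary odd residue characteristic.
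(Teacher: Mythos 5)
Your overall strategy --- reduce to characteristic zero, where Vust and De Concini--Springer supply the invariant vector (resp.\ the stable line), and then descend to $\R$ and to odd positive characteristic via Lemma \ref{lem:invsuZ} --- is exactly the paper's. The spherical case goes through, and your explicit remark that linear reductivity of $H$ over $\mC$ is what converts $\grG(G/B,\calL_\grl)^H\neq 0$ into $V_\grl(\mC)^{H(\mC)}\neq 0$ makes precise a step the paper leaves implicit.

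The gap is in the quasi-spherical case, at the sentence ``Uniqueness forces $\chi$ to be defined over $\R$.'' Uniqueness of the $\Had(\mC)$-stable line gives, by Galois descent, that the line and hence $\chi$ are defined over the fraction field $\mQ$; it does not give that $\chi$ is a character of the flat group scheme $\Had$ over $\R=\mZ[1/2]$, which is what you need for $V_\grl\otimes\mk_{-\chi}$ to be ``defined and flat over $\R$'' as a representation of $\Had$, as Lemma \ref{lem:invsuZ} requires. (Recall that $\Had$ is in general non-reduced and disconnected, so extending a character from the generic fibre to the integral model is not a formality.) The paper avoids this by twisting by a character of $\Hz$ rather than of $\Had$: since $\Hz$ is connected with an $\R$-split maximal torus $N$, every character of $\Hz$ is the restriction of a character of $N$ and hence is automatically defined over $\R$; and since $\Hz$ is spherical, distinct $\Hz$-stable lines in $V_\grl(\mC)$ carry distinct $\Hz$-characters, so the space of $\Hz$-invariants of $V_\grl\otimes\chi^{-1}$ is one-dimensional and Lemma \ref{lem:invsuZ} produces the line over $\R$ with nonzero reduction modulo every odd prime; $\Had$-stability of that line is then automatic. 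If you replace the appeal to uniqueness by this argument --- twist by the $\Hz$-character of the line, justified as defined over $\R$ via $N$ --- your proof closes and coincides with the paper's.
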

\begin{proof}
  Let $G, \grs$, $V_{\lambda}$ be defined over $\R$ as explained above. Let $M$,
  $N$, $B_M$ be as in section \ref{ssec:Z}.  In particular any character
  of the group $\Hz$ is a character of $N$ hence it is  defined over $\R$. 
  
  Let now $\grl \in \Omega^+_H$. Since $V_\grl(\mC)$ contains a non zero
  vector fixed by $H$ the claim follows from    Lemma \ref{lem:invsuZ}.

  In general notice that since $\Hz$ is a spherical subgroup (it has
  an open orbit in $G/B$) it acts on two different lines in
  $V_\grl(\mathbb C)$ stabilized by $\Hz$ with different characters.
  In particular any line in $V_\grl(\mC)$ which is stabilized by
  $\Hz(\mC)$ must be defined over $\R$: indeed let $R$ be such a line
  and consider the character $\chi$ of $\Hz$ given by the action of
  $\Hz$ on $R$. Recall that with our choices all characters of $\Hz$
  are defined over $\R$. By applying Lemma \ref{lem:invsuZ} to $V_\grl
  \otimes \chi^{-1}$ we see that the line $R$ must be defined over
  $\R$. In particular the line stabilized by $\Had(\mC)$ in
  $V_\grl(\mC)$ is stabilized by $\Hz$ so it is defined over $\R$ and
  it is $\Had$ stable.
\end{proof}

\subsection{Line bundles on $G/H$}
In this section we want to study some properties of the line bundles
on $G/H$. We begin with a remark on $\Had/H$.

\begin{lem}\label{lem:HeS}
  The coordinate ring of $\Had/H$ is isomorphic to the group algebra 
  $\mk[\Omega_H/\Omega_\Had]$. 
\end{lem}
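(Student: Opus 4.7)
The strategy is to realize $\Had/H$ as a diagonalizable finite group scheme with character group $\Omega_H/\Omega_{\Had}$; since the coordinate ring of such a scheme is canonically the group algebra of its character group, this will give the lemma.

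For each $\grl \in \Omega^+$, Corollary \ref{cor:invV} together with the sphericity of $\Hz$ provides a one‑dimensional space $L_\grl := V_\grl^{\Hz}$, generated by some $v_\grl$. Since $\Had$ normalizes $\Hz$, it preserves $L_\grl$ and acts there by a character $\chi_\grl : \Had \to \mk^*$ that is trivial on $\Hz$. The identity $\chi_{\grl+\mu} = \chi_\grl \chi_\mu$ follows because $v_\grl \otimes v_\mu$ projects, under the $G$‑equivariant Cartan map $V_\grl \otimes V_\mu \twoheadrightarrow V_{\grl+\mu}$, to a nonzero multiple of $v_{\grl+\mu}$. Thus $\grl \mapsto \chi_\grl$ extends to a group homomorphism $\chi : \Omega \to \Hom(\Had/\Hz, \mk^*)$; by the definition of $\Omega_{\Had}$ its kernel is exactly $\Omega_{\Had}$, and $\chi_\grl|_H = 1$ precisely when $\grl \in \Omega_H$.

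Next I would compute $\mk[\Had/\Hz]$. Since $\Hz$ is reductive, $G/\Hz$ is affine by Matsushima's theorem. The spherical multiplicity‑one property combined with Corollary \ref{cor:invV} yields the decomposition
\[
\mk[G/\Hz] \;=\; \bigoplus_{\grl \in \Omega^+} V_\grl^* \otimes L_\grl,
\]
in which each summand is the $\chi_\grl$‑isotypic component for the right $\Had$‑action. The $\Had$‑orbit of the base point of $G/\Hz$ is the finite (hence closed) subscheme $\Had/\Hz$, so restriction gives a surjection $\mk[G/\Hz] \twoheadrightarrow \mk[\Had/\Hz]$, and a direct matrix‑coefficient computation shows it sends each summand onto the line $\mk\cdot\chi_\grl$. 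Because $\Omega_{\Had}^+$ contains strictly dominant elements, every coset of $\Omega_{\Had}$ in $\Omega$ has a representative in $\Omega^+$, so the image contains $\chi_\grl$ for every $\grl \in \Omega$. By Dedekind's lemma on the linear independence of characters these form a basis of $\mk[\Had/\Hz]$ indexed by $\Omega/\Omega_{\Had}$, giving $\mk[\Had/\Hz] \cong \mk[\Omega/\Omega_{\Had}]$ as Hopf algebras and showing in particular that $\Had/\Hz$ is diagonalizable.

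Finally, $\mk[\Had/H] = (\mk[\Had/\Hz])^{H/\Hz}$, and the action of $H/\Hz$ on the basis vector $\chi_\grl$ is multiplication by $\chi_\grl|_H$, which is trivial iff $\grl \in \Omega_H$. Hence $\mk[\Had/H] \cong \mk[\Omega_H/\Omega_{\Had}]$, as required. The main anticipated obstacle is justifying the displayed decomposition of $\mk[G/\Hz]$ and the surjectivity of the restriction to $\Had/\Hz$ in positive characteristic, where $\Hz$ is reductive but not linearly reductive; both should follow from the spherical multiplicity‑one property of $\Hz$ together with the $\R$‑integral constructions of Corollary \ref{cor:invV}, in the spirit of the arguments of Section \ref{ssec:sferici}.
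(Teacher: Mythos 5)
Your strategy is genuinely different from the paper's, and it does work in characteristic zero, but it has a real gap in positive characteristic that you flag without resolving. The paper's proof is a two-step reduction to tori: by Vust's Proposition 7 one has $H=\Hz\cdot(H\cap S)$, whence $\Had/H\isocan(\Had\cap S)/(H\cap S)\isocan\ker\{S_H\twoheadrightarrow S_\Had\}$ as schemes; since by Lemma \ref{lem:sferici} v) these are the diagonalizable schemes $\Spec\mk[\Omega_H]$ and $\Spec\mk[\Omega_\Had]$, the coordinate ring of the kernel is $\mk[S_H]/\langle e^\chi-1\st\chi\in\Omega_\Had\rangle\isocan\mk[\Omega_H/\Omega_\Had]$. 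This sidesteps all representation theory of $G$ and is characteristic-free. Your route instead tries to exhibit $\Had/H$ as diagonalizable by producing enough characters $\chi_\grl$ from the spherical lines; the formal bookkeeping ($\chi$ is a homomorphism with kernel $\Omega_\Had$, resp.\ $\chi_\grl|_H=1$ iff $\grl\in\Omega_H$, passage to $H/\Hz$-invariants) is fine, granting the central claim that the $\chi_\grl$ span $\mk[\Had/\Hz]$.

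The gap is exactly where you suspect it, and it is not routine to fill. The decomposition $\mk[G/\Hz]=\bigoplus_{\grl\in\Omega^+}V_\grl^*\otimes L_\grl$ is false in positive characteristic: $\mk[G/\Hz]$ only carries a good filtration with these graded pieces (this is why the paper itself works with the filtration $\mF_\grl$ in Section \ref{ssec:inv} rather than a direct sum). Your restriction-to-the-orbit computation, which identifies the image of each summand with the line $\mk\cdot\chi_\grl$, therefore has no summands to act on. To repair it one would have to show that each filtration step $\mF_\grl$ is a right-$(\Had/\Hz)$-eigenspace for $\chi_\grl$ and that its image in $\mk[\Had/\Hz]$ is the corresponding character line; that is essentially the eigenspace decomposition of Proposition \ref{lem:splitta} --- which the paper \emph{deduces from} Lemma \ref{lem:HeS}, so invoking it here would be circular. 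Appealing to Corollary \ref{cor:invV} and the $\R$-integral structure gives you the individual spherical vectors over $\R$, but not the direct sum decomposition of the whole coordinate ring, so the surjectivity of $\bigoplus_\grl\mk\cdot\chi_\grl\to\mk[\Had/\Hz]$ remains unproved in characteristic $p$. (A minor secondary point: the Cartan map for Weyl modules naturally goes $V_{\grl+\mu}\hookrightarrow V_\grl\otimes V_\mu$; to get the multiplicativity $\chi_{\grl+\mu}=\chi_\grl\chi_\mu$ it is cleaner to multiply the spherical sections in $\grG(G/B,\calL_\grl)^{\Hz}\cdot\grG(G/B,\calL_\mu)^{\Hz}\subset\grG(G/B,\calL_{\grl+\mu})$, where nonvanishing of the product is automatic.) The quickest fix is simply to adopt the paper's reduction $H=\Hz\cdot(H\cap S)$, after which everything happens inside the torus $S$.
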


\begin{proof}
  Let $H\cap S = H\times_G S$ be  the scheme theoretic intersection of $H$
  and $S$. By Proposition 7 in \cite{Vust} we have $H = \Hz \cdot
  (H\cap S)$.  So we have
  $$\Had/H \isocan \Had\cap S / H\cap S \isocan \ker \{S_H \surietta
  S_\Had\}.$$ 
  where the kernel has to be considered scheme
  theoretically.  Now by Lemma  \ref{lem:sferici} v) we have $S_\Had
  \isocan \Spec \mk[\Omega_\Had]$ and $S_H \isocan \Spec
  \mk[\Omega_H]$.

  It follows that, if we denote by $e^\chi$ the function on $S_H$
  corresponding to $\chi \in \Omega_H$, the coordinate ring of the kernel
  is then given by $\mk[S_H]/\langle e^\chi - 1 \st \chi \in
  \Omega_\Had\rangle \isocan \mk[\Omega_H/\Omega_\Had]$, proving the
  claim.
\end{proof}

We denote by $x_H$ the point of $G/H$ corresponding to the coset $eH$
and by $q_H : G/ H \lra G/\Had$ the projection induced by inclusion $H
\subset \Had$.

The line bundles on $G/H$ are parametrized by the set of one
dimensional characters $\grL_H$ of $H$ by associating to a line bundle
$\calL$ the character by which $H$ acts on the fiber of $\calL$ over
$x_H$. 

If $\grl \in \Pi^+$ by Theorem \ref{teo:DS} the line fixed by $\Had$
in $V_\grl^*$ is unique and we can consider the character
$-\chi_H(\grl)$ given by the action of $H$ on this line. The map
$\chi_H :\Pi^+ \lra \grL_H$ extends to a group homomorphism $\chi_H :
\Pi \lra \grL_H$ and by Lemma \ref{lem:sferici} $i)$ the kernel of
this homomorphism is given by $\Omega_H$.  In particular for any $\xi
\in \Pi/\Omega_\Had$ we can consider a line bundle $\calL_\xi$ on
$G/\Had$ whose associated isomorphism class is given by
$\chi_\Had(\xi)$.

\begin{prp}\label{lem:splitta} 
  The vector bundles $(q_H)_*(\calO_{G/H})$ and $ \bigoplus_{\xi\in
    \Omega_H / \Omega_\Had} \calL_\xi$ on $G/\Had$ are $G$
  equivariantly isomorphic.
\end{prp}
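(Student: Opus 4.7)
The plan is to exhibit $q_H : G/H \to G/\Had$ as a $G$-equivariant principal bundle for the finite (possibly non-reduced) diagonalizable group scheme $\Had/H$, and then decompose the corresponding pushforward of the structure sheaf into its one-dimensional isotypic components. I first observe that $H$ is normal in $\Had$: the quotient $\Had/\Hz$ is abelian (its coordinate ring is a commutative group algebra, by the argument of Lemma \ref{lem:HeS} applied to $H=\Hz$), so every subgroup scheme sandwiched between $\Hz$ and $\Had$ is normal in $\Had$. The right translation of $\Had$ on $G$ therefore descends to a free action of $\Had/H$ on $G/H$ with quotient $G/\Had$, and one has a $G$-equivariant associated-bundle presentation $G/H \cong G\times_\Had (\Had/H)$ over $G/\Had$, so that
\[
(q_H)_* \calO_{G/H} \;\cong\; G \times_\Had \calO_{\Had/H}.
\]

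Next, by Lemma \ref{lem:HeS} we identify $\calO_{\Had/H}$ with the group algebra $\mk[\Omega_H/\Omega_\Had]$; since $\Had/H$ is diagonalizable with character group $\Omega_H/\Omega_\Had$, its regular representation splits equivariantly as $\bigoplus_{\xi \in \Omega_H/\Omega_\Had} \mk_\xi$, where $\mk_\xi$ is the one-dimensional eigenspace on which $\Had/H$ acts by the character $\xi$. Viewing each $\mk_\xi$ as a representation of $\Had$ via the surjection $\Had \twoheadrightarrow \Had/H$ and applying $G\times_\Had(-)$ termwise yields the required decomposition
\[
(q_H)_* \calO_{G/H} \;\cong\; \bigoplus_{\xi \in \Omega_H/\Omega_\Had} \calM_\xi, \qquad \calM_\xi := G\times_\Had \mk_\xi,
\]
into $G$-equivariant line bundles on $G/\Had$.

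The final step is to identify each $\calM_\xi$ with $\calL_\xi$ as a $G$-equivariant line bundle, which amounts to comparing the two characters of $\Had$ attached to $\xi$: on the $\calM_\xi$-side the pullback of $\xi$ along $\Had \twoheadrightarrow \Had/H$, and on the $\calL_\xi$-side the character $\chi_\Had(\xi)$. Both characters are trivial on $H$---hence factor through $\Had/H$---and must be compared there. I would do so using the spherical vector: by Corollary \ref{cor:invV}, for $\xi \in \Omega_H^+$ there is an $H$-invariant vector in $V_\xi^*$, and by normality of $H$ in $\Had$ together with uniqueness of the $H$-invariant line, this vector spans the $\Had$-fixed line appearing in the definition of $\chi_\Had(\xi)$; its $\Had$-weight then admits two evaluations---one giving $\chi_\Had(\xi)$ by definition, the other the pullback of $\xi$ via the isomorphism $\Had/H \cong (\Had\cap S)/(H\cap S)$ from the proof of Lemma \ref{lem:HeS}. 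This final character identification is the main technical point; once it is settled, the proposition reduces to the formal decomposition of the pushforward along a principal diagonalizable bundle.
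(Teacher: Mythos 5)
Your argument is correct in outline but takes a genuinely different route from the paper's. The paper never invokes normality of $H$ in $\Had$ or the torsor structure of $q_H$: it notes that both bundles have rank $|\Omega_H/\Omega_\Had|$ (since $q_H$ is a covering of that degree, by Lemma \ref{lem:HeS}), produces for each $\xi$ a $G$-equivariant monomorphism $\calL_\xi\to (q_H)_*(\calO_{G/H})$ by adjunction from the triviality of $q_H^*(\calL_\xi)$, and then shows by induction that the sum of these maps has maximal rank --- the only input being that a nonzero $G$-equivariant morphism between $\calL_\xi$ and $\calL_{\xi'}$ is an isomorphism and hence forces $\xi=\xi'$; equality of ranks then finishes the proof. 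This completely sidesteps what you rightly single out as the main technical point, namely matching $\chi_\Had(\xi)$ against the tautological character of the diagonalizable group $\Had/H\cong\ker\{S_H\to S_\Had\}$. Your structural approach buys a finer statement (it exhibits the fibre of $(q_H)_*\calO_{G/H}$ as the regular representation of $\Had/H$ and identifies its isotypic pieces, and it works uniformly in all characteristics since diagonalizable group schemes are linearly reductive) at the cost of that character computation, which you leave as a sketch. It can be completed with tools from the paper: for $\grl\in\Omega_H^+$ the $H$-invariant line in $V_\grl^*$ is $\Had\cap S$-stable (by the normality you established), so its $\Had\cap S$-character can be read off from any nonvanishing $T$-weight component of the spherical vector; by Lemma \ref{lem:pesoalto} the lowest-weight component, of weight $-\grl$, is nonzero, so $\Had\cap S$ acts by $-r(\grl)$, which under the identification of $\Omega_H$ with $\grL_{S_H}$ is exactly the restriction of $-\xi$ to $\ker\{S_H\to S_\Had\}$, as needed (the overall sign is immaterial, since $\xi\mapsto-\xi$ merely permutes the summands). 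Your reduction of the normality of $H$ in $\Had$ to the commutativity of $\Had/\Hz$, via Vust's decomposition $\Had=\Hz\cdot(\Had\cap S)$ underlying Lemma \ref{lem:HeS}, is also sound.
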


\begin{proof}Set $\Xi_H = \Omega_H / \Omega_\Had$.  Notice first that
  by Lemma \ref{lem:HeS} the map $q_H$ is a covering of degree equal
  to the cardinality of $\Xi_H$. So the two vector bundles
  $(q_H)_*(\calO_{G/H})$ and $\bigoplus_{\xi\in \Xi_H} \calL_\xi$ have
  the same rank.
  
  If $\xi\in \Xi_H$ then $q_H^*(\calL_\xi)$ is trivial for all $\xi$
  as a $G$ linearized line bundle. So, by adjunction, we have a $G$
  equivariant monomorphism of sheaves $\calL_\xi \lra (q_{H})_{
    *}(\calO_{G/H})$.  We deduce for any subset $R\subset \Xi_H$ a $G$
  equivariant map $\gamma_R:\bigoplus_{\xi\in R} \calL_\xi\to
  (q_H)_*(\calO_{G/H})$.  Since $\gamma_R$ is equivariant the induced
  map at the level of the total spaces of vector bundles has constant
  rank.
  
  We claim that $\gamma_R$ is of rank $|R|$. If $|R|=1$ this is clear
  by the above considerations. We proceed by induction. Write
  $R=R'\cup\{\xi\}$. $\gamma_R'$ is of rank $|R|-1$. Assume $\gamma_R$
  is not of maximal rank. We clearly get an inclusion $j:\calL_\xi\to
  \oplus_{\xi'\in R'}\calL_{\xi'}$. In particular there exists
  $\xi'\in R'$ such that the composition of $j$ with the projection
  onto $\calL_{\xi'}$ is a non zero $G$ equivariant morphism and thus
  an isomorphism of line bundles. Since $\xi\neq \xi'$ this is a
  contradiction.
  
  If we apply this to $R=\Xi_H$ and use the fact that
  $(q_H)_*(\calO_{G/H})$ and $\bigoplus_{\xi\in \Xi_H} \calL_\xi$
  have the same rank we get that $\gamma_{\Xi_H}$ is a isomorphism as
  desired.
\end{proof}

\section{Completions of symmetric varieties}\label{sec:comp}
An \emph{embedding} of a symmetric variety $G/H$ is a normal connected
$G$ variety $Y$ together with an open $G$ equivariant inclusion
$\mj_Y: G/H\subset Y$. We set $y_0$ equal to the image of $x_H$ under
this embedding and call it the basepoint of $Y$. We are also going to
consider the finite covering $\pi_Y:G/\Hz\lra Y$ of $\mj_Y$ given by
$\pi_Y(g\Hz ) =g\cdot y_0$.  We denote by $Y_0$ the image of $\mj_Y$
and set $\partial Y = Y \senza Y_0$ and $\grD_Y$ equal to the set of
irreducible components of $\partial Y$ of codimension $1$ in $Y$.

A line bundle $\calL$ on $Y$ is said to be \emph{spherical} if
$\pi_Y^*(\calL)$ is isomorphic to the trivial line bundle on $G/\Hz$.
We denote $\SPic(Y)$ the subgroup of the Picard group $\Pic(Y)$ of $Y$
of spherical line bundles. We also say that a line bundle is \emph{strictly
spherical} if restricted to the open orbit $G/H$ it is isomorphic to the
trivial line bundle and we denote by $\SPic_0(Y)$ the subgroup of
$\Pic(Y)$ of classes of strictly spherical line bundles. 

Many of the properties of $Y$ can be deduced from corresponding
properties of the associated toric variety $Y_S$. This is
defined as the closure of the orbit $S\cdot y_0$ in $Y$. Notice that
since $S\cdot y_0$ is isomorphic to $S_H$, $Y_S $ is a toric variety
for the torus $S_H$.  The normalizer $N_\Hz(S)$ of $S$ in $\Hz$ acts on
$Y_S$ and  the action of the centralizer $Z_\Hz(S)$ of $S$ is
trivial. It follows that we have an action of the \emph{restricted
  Weyl group} $\tW = N_\Hz(S)/Z_\Hz(S)$ on $Y_S$.

We are now going to describe an open subvariety $Y_S^+$ of $Y_S$ with
the property the $\tW$ translates of $Y_S^+$ cover $Y_S$. Let
$\grL\cech_S$ be the lattice of one parameter subgroups of $S$. If
$\eta \in \grL\cech_S$ and there exists the limit $\lim_{t\to
  \infty}\eta(t)\cdot y_0$ we denote this limit by $y_\eta$. We say
that $\eta \in \grL\cech_S$ is positive if $\tal(\eta(t))$ is a non
negative power of $t$ for all $\tal \in \tDe$ and let $Y_S^+ $ the
union of the $S$ orbits of the elements $\{y_\eta \st \eta \in
\grL\cech_S$ is positive$\}$. It is then immediate to verify that
$Y_S=\tilde WY^+_S$. Indeed if $y\in Y_S$ there is a $\eta\in
\grL\cech_S$ and a $s\in S$ such that $y=s(\lim_{t\to
  \infty}\eta(t)\cdot y_0)$. Since $\eta$ is $\tW$ conjugate to a
positive one parameter subgroup we deduce $y$ is $\tW$ conjugate to an
element in $Y_S^+ $.

\subsection{The wonderful compactification of a symmetric variety}\label{ssec:X}
The so called wonderful compactification $X$ of the symmetric variety
$G/\Had$ has been introduced in characteristic zero in \cite{DP1} and
in arbitrary characteristic in \cite{DS}. We want to very briefly
recall some of the basic properties of $X$ and introduce some
notations.

Recall that by Lemma \ref{lem:sferici} and Theorem \ref{teo:Vust} a
basis of the character lattice $\grL_{S_\Had} $ is given by the set
$\tDe = \{\tal_1,\dots,\tal_\ell\}$ of simple restricted roots (with an
arbitrarily chosen numbering). Thus we get an action, defined over
$\R$, of $S_\Had$ on the affine space $\mA^\ell$ given by
$s(a_1,\ldots ,a_\ell)=(\tal_1(s)a_1,\ldots ,\tal_\ell(s)a_\ell)$ . The following Theorem can   be taken implicitly as the defnition of the wonderful compactification.

\begin{teo}[Theorem 3.1 in \cite{DP1}, Proposition 3.10, Theorem 3.10 and
  Theorem 3.13 in \cite{DS}]\label{teo:X} 
The wonderful compactification $X$ of  $G/\Had$ is the unique $G/\Had$ embedding such that
  \begin{enumerate}[\indent i)]
  \item $X$ is a smooth projective $G$ variety and the closure of
    every $G$ orbit in $X$ is smooth.
  \item $\partial X$ is a divisor with normal crossing and smooth
    irreducible components.
  \item given a $G$ orbit $\mathcal O\subset X$, $\overline {\mathcal
      O}$ is the transversal intersection of the irreducible divisors
    in $\grD_X$ containing it.
  \item The intersection of any number of divisors in $\grD_X$ is a
    $G$ orbit closure. In particular the intersection of all divisors
    in $\grD_X$ is the unique closed $G$ orbit in $X$.
  \item There exists a scheme $\mbX$ defined and flat over $\R$ whose
    specialization to $\mk$ is isomorphic to $X$. Moreover the point
    $x_0=\mj_X(x_\Had)$ is defined over $\R$. 
  \item Let $\mbG$ be as in Section \ref{ssec:Z}. There is an action
    of $\mbG$ on $\mbX$ that specializes over $\mk$ to the action of
    $G$ on $X$.
  \item We have an isomorphism $X_S^+ \isocan \mA^\ell$ as $S_\Hz$
    toric varieties defined over $\R$.  \end{enumerate}
\end{teo}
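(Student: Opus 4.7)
The plan is to construct $\mbX$ explicitly inside a projective space defined over $\R$, following the approach of \cite{DP1} in characteristic zero and of \cite{DS} in general, and then to extract the seven properties from the cited results while tracking the $\R$-structure throughout. Since $\Omega^+$ is free on $\{\tom_\tal\}_{\tal\in\tDe}$ by Lemma \ref{lem:sferici}, I would pick a strongly dominant $\grl\in\Omega^+$, i.e.\ one whose coefficients on the basis $\{\tom_\tal\}$ are all strictly positive. By Corollary \ref{cor:invV} there is an $\Had$-stable line $\mk h \subset V_\grl^*$ defined over $\R$ whose stabilizer in $G$ is exactly $\Had$ for such a $\grl$. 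The morphism $\mj_X: G/\Had \lra \mathbb{P}(V_\grl^*)$ sending $g\Had$ to $[g\cdot h]$ is then a $G$-equivariant locally closed immersion defined over $\R$, and I would take $\mbX$ to be the scheme-theoretic closure of its image in $\mathbb{P}(V_{\grl,\R}^*)$, with $X = \mbX \times_\R \mk$.

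Properties (i)--(iv) are then precisely the content of \cite[Thm.~3.1]{DP1} in characteristic zero and of \cite[Prop.~3.10, Thm.~3.10, Thm.~3.13]{DS} in arbitrary characteristic $\neq 2$. The essential ingredient shared by both references is the exhibition of an open affine neighborhood of the unique $B^-$-fixed point of the closed $G$-orbit, of the form $\mA^\ell \times U$, where $U$ is an affine space acted on by the unipotent radical of the opposite Borel and the $\ell$ coordinates on $\mA^\ell$ cut out the $\ell$ boundary divisors, the coordinates being the inverse characters $\tal_1,\dots,\tal_\ell$ of $S_\Had$. Smoothness, the normal crossing structure of $\partial X$, the transversal-intersection description of $G$-orbit closures, and the uniqueness of the closed orbit then all follow by translating this chart around by $G$.

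For (v) and (vi), the ambient projective space, the $\mbG$-action, and $\mj_X$ are all defined over $\R$ by construction, so $\mbX$ is naturally a closed $\mbG$-subscheme of $\mathbb{P}(V_{\grl,\R}^*)$; flatness over $\R$ follows since $\mbX$ is reduced and irreducible with geometric fibers of constant dimension, which one checks on the $\mbG(\R)$-translates of the affine toric chart. Statement (vii) is then a direct consequence of the toric structure of this chart: since $\tDe$ is a basis of $\grL_{S_\Had}$ by Lemma \ref{lem:sferici}, the $S_\Had$-invariant piece containing the closed fixed point is $\Spec \R[\tal_1,\ldots,\tal_\ell] \isocan \mA^\ell$ with the standard toric action.

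The main obstacle lies entirely within the positive-characteristic arguments of \cite{DS}: the characteristic-zero proofs in \cite{DP1} rely on the exponential map and on semisimplicity of $G$-representations, both of which are unavailable here. These must be replaced by a direct construction of the affine chart using the weight decomposition of $V_\grl$ and the Bruhat decomposition of $G/B$, together with the $\R$-forms produced in Section \ref{ssec:Z} to ensure that the chart specializes correctly over each field of odd characteristic. Once the chart is established, the remaining statements propagate uniformly across characteristics.
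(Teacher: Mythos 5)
The paper offers no proof of Theorem \ref{teo:X}: it is stated as a quotation of Theorem 3.1 of \cite{DP1} and Proposition 3.10, Theorems 3.10 and 3.13 of \cite{DS}, and is explicitly taken as the working definition of $X$. Your sketch --- embedding $G/\Had$ into $\mP(V_\grl^*)$ via the $\Had$-stable line for a strongly dominant spherical $\grl$, taking the closure over $\R$, and deducing i)--vii) from the local chart $\mA^\ell\times U$ --- is a faithful outline of exactly the construction those references carry out, so it is consistent with the paper's treatment (the only quibble being that flatness over $\R=\mZ[1/2]$ follows already from $\mbX$ being integral and dominating $\Spec\R$, not from constancy of fiber dimension).
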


Since any projective $G$ variety is isomorphic to a variety $G/P$ with
$P$ a parabolic subgroup containing $B$, we have already remarked that
its Picard group can be identified with a sublattice of $\grL$. Thus
composing with the homomorphism induced by the inclusion of the unique
closed orbit, we get a homomorphism $j:\Pic(X)\to \grL$. One has,

\begin{teo}[Theorem 4.2 and Theorem 4.8 in \cite{DS}]\label{teo:XI}\hfill
 
\begin{enumerate}[\indent i)]
\item The homomorphism $j$ is injective and its image is the
  sublattice $\Pi$ of $\grL$.
\item The map $D\to j(\calO(D))$ is a bijection between $\grD_X$ and 
    $\tDe$.
\end{enumerate}
\end{teo}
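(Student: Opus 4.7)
The plan is to first prove $ii)$ via the affine slice $X_S^+$, and then to combine this with Corollary \ref{cor:invV} to prove $i)$.

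For $ii)$: By Theorem \ref{teo:X} $vii)$ we have $X_S^+\isocan\mA^\ell$ as $S_\Had$-toric variety, with coordinates $a_1,\dots,a_\ell$ of respective $S_\Had$-weights $\tal_1,\dots,\tal_\ell$. The $S$-invariant prime divisors of $X_S^+$ are the coordinate hyperplanes $H_i=\{a_i=0\}$. By Theorem \ref{teo:X} $iv)$ the $G$-orbit closure $\overline{G\cdot H_i}$ is an irreducible boundary divisor $D_i\in\grD_X$, and one checks $D_i\cap X_S^+=H_i$; hence the assignment $\tal_i\mapsto D_i$ gives an injection $\tDe\hookrightarrow\grD_X$. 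For surjectivity, let $D\in\grD_X$. By Theorem \ref{teo:X} $iv)$ the closed orbit $Y_0=\bigcap_{D'\in\grD_X}D'$ is contained in $D$ and meets $X_S^+$ at the origin $z_0$; since $D$ does not contain $X_S^+$, the intersection $D\cap X_S^+$ is a nonempty $S$-invariant divisor in $X_S^+$ and thus contains some $H_i$, forcing $D=D_i$.

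For the surjectivity of $j$ onto $\Pi$ in $i)$: Given $\grl\in\Pi^+$, Corollary \ref{cor:invV} produces an $\Had$-stable line $\ell_\grl\subset V_\grl^*$ defined over $\R$. The orbit map $g\mapsto g\cdot[\ell_\grl]$ factors through an injection $G/\Had\hookrightarrow\mP(V_\grl^*)$; let $X_\grl$ be the closure of the image, a simple $G$-completion of $G/\Had$. Applying the universal property of the wonderful compactification (whose references are collected in Theorem \ref{teo:X}) extends this to a $G$-equivariant morphism $\phi_\grl:X\to X_\grl$. Setting $\calL_\grl:=\phi_\grl^*\calO(1)$ and restricting to $Y_0$, which maps onto the $G$-orbit of the highest weight line in $\mP(V_\grl^*)$, one reads off $j(\calL_\grl)=\grl$ directly from the action of $B$ on the fiber. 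Since $\Pi=\mZ\cdot\Pi^+$, this yields $\Pi\subseteq j(\Pic(X))$.

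For injectivity of $j$ and the reverse inclusion $j(\Pic(X))\subseteq\Pi$: Any $\calL\in\Pic(X)$ restricts on the open orbit $G/\Had$ to a $G$-line bundle classified by a character $\chi$ of $\Had$; comparing $\chi$ with the action of $B\cap\Had$ on the fiber at $z_0$ shows $\chi$ is determined by $j(\calL)$ and forces $j(\calL)\in\Pi$. If $j(\calL)=0$, the excision sequence $\bigoplus_i\mZ\cdot[D_i]\to\Pic(X)\to\Pic(G/\Had)\to 0$ combined with the triviality of $\calL$ on $G/\Had$ gives $\calL\isocan\calO(\sum n_iD_i)$ for some $n_i\in\mZ$. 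Computing $j(\calO(D_i))$ from the $S$-character on the normal bundle $N_{D_i/X}|_{z_0}\isocan\mk\cdot\partial/\partial a_i$ produces a lift of $\pm\tal_i$ to $\grL$; by the $\mZ$-linear independence of $\tDe$ in $\grL$ we conclude $n_i=0$ for all $i$, so $\calL$ is trivial.

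The main obstacle is the extension step $G/\Had\to X_\grl \rightsquigarrow \phi_\grl:X\to X_\grl$ in positive characteristic: it requires not only Corollary \ref{cor:invV} to guarantee that the $\Had$-stable line exists integrally, but also a careful use of the smoothness of $X$ and the orbit structure of Theorem \ref{teo:X} $iii), iv)$ to rule out indeterminacies of the rational map. All subsidiary computations (weights of coordinate functions, normal bundles at $z_0$, action on closed-orbit fibers) are routine once this extension is in place.
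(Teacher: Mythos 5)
First, note that Theorem \ref{teo:XI} is quoted verbatim from \cite{DS} (Theorems 4.2 and 4.8) and the paper contains no proof of it, so there is no internal argument to compare yours against; I can only assess your attempt on its own terms. Its overall architecture (the toric slice $X_S^+$ to control $\grD_X$, orbit closures in $\mP(V_\grl^*)$ for surjectivity, excision plus independence of the boundary classes for injectivity) is the standard one and is reasonable. However, there are genuine gaps. The most serious is the inclusion $j(\Pic(X))\subseteq \Pi$: you assert that the character $\chi$ of $\Had$ on the fibre over $x_0$ ``forces $j(\calL)\in\Pi$'', but this is precisely the nontrivial content of the statement. What must be shown is that every character of $\Had$ (equivalently, every line bundle on $G/\Had$) is of the form $\chi_\Had(\grl)$ for some quasi-spherical $\grl$; this is where the analysis of exceptional roots enters in \cite{DS}, and nothing in your one-sentence argument produces it (the usual route is to show that $\Pic(X)$ is generated by the $B$-stable divisors and to compute the classes of the colours, which land in $\Pi^+$). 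Relatedly, part ii) as stated is a claim about the map $D\mapsto j(\calO(D))$, not merely a bijection of finite sets: your argument via $X_S^+\isocan\mA^\ell$ only matches $\grD_X$ with $\{1,\dots,\ell\}$, while the actual identification $j(\calO(D_i))=\tal_i$ is relegated to a normal-bundle computation that (a) records only the $S$-weight, which determines $r(j(\calO(D_i)))\in\grL_S$ but not the element of $\grL$ itself --- for that you need the full $T$-weight on the cell $X_S^+\times U$ --- and (b) leaves an unresolved sign ``$\pm\tal_i$'', which is exactly the distinction between $\tDe$ and $-\tDe$.

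Second, the surjectivity step has two problems beyond the extension issue you flag. For non-regular $\grl\in\Pi^+$ the orbit map $g\mapsto g\cdot[\ell_\grl]$ does \emph{not} factor through an injection $G/\Had\hookrightarrow\mP(V_\grl^*)$ (the stabilizer of the line is strictly larger than $\Had$; consider $\grl$ on a wall, or $\grl=0$), so $X_\grl$ need not be a completion of $G/\Had$ and there is no ``universal property of the wonderful compactification'' available to invoke --- Theorem \ref{teo:X} is a uniqueness characterization, not a statement that $X$ dominates all equivariant images. What one actually proves is that the rational map $X\dashrightarrow\mP(V_\grl^*)$ is everywhere defined, by checking it on the open cell $X_S^+\times U$ using the nonvanishing of the lowest-weight component of the $\Had$-fixed vector (Lemma \ref{lem:pesoalto} i) is the relevant tool) and then translating by $G$; one must then still compute $j(\phi_\grl^*\calO(1))=\grl$ on the closed orbit. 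You correctly identify the extension as the main obstacle but leave it, and the closed-orbit computation, entirely undone; since these are the places where the theorem could actually fail, the proposal is a plausible outline rather than a proof.
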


Notice that combining these two results we easily see that we get a
bijection between the subsets $\Gamma\subset \tDe$ and the set of $G$
orbit closures defined by associating to $\Gamma$ the intersection
$$X_{\Gamma}:=\cap_{\{D|j(\calO(D))\in\Gamma\}}D.$$
In particular $X_{\tDe}$ is the unique closed orbit while
$X=X_\vuoto$.  Let also $X_\tal = X_{\{\tal\}}$ for $\tal \in
\tDe$.

For each $\grl \in \Pi$ we choose a line bundle $\calL_\grl$ on $X$
such that $j(\calL_\grl)=\grl$ in the following way. First we
choose a basis $\calB$ of $\Pi$ and for each $\grb \in \calB$ we
take a line bundle with the required property.  Now, for $\grl
=\sum_{\grb\in\calB} c_\grb \, \grb \in \Pi$, $c_\beta\in\mathbb Z$,
we set $\calL_\grl:=\bigotimes_{\grb \in \calB} \calL_\grb^{\otimes
  c_\grb}$. We denote the restriction of these line bundles to $\XD$
by the same symbol.

If $L\subset \grL$ is a sublattice of $\grL$, then our definition allows
us to consider the graded rings
$$
R_L(X)\defi \bigoplus_{\grl\in L} \grG(X,\calL_\grl)
\;\mand\;
R_L(\XD)\defi \bigoplus_{\grl\in L} \grG(\XD,\calL_\grl).
$$
The ring $R(X)=R_\Pi(X)$ it is called the \emph{Cox ring} of $X$ and
it was studied in the case of the variety $X$ in \cite{CM1} where it
was called the ring of all sections. The fact that $G$ is simply
connected implies that each line bundle on $X$ has a canonical $G$
linearization. It follows $G$ acts on $R_L(X)$ and $R_L(\XD)$.

The space $\grG(X,\calL_\grl)$ of sections of $ \calL_\grl$ has been
described as a $G$ module in \cite{DP1} and \cite{DS}. Let us recall
here this description.

Recall that a good filtration of a $G$ module $W$ is a filtration
$W=W_0\supset W_1\supset \cdots \supset W_m=\{0\}$ by $G$ submodules
such that for each $i=1,\ldots m,$ $W_{i-1}/W_i$ is isomorphic to
$\grG(G/B,\calL_{\grl_i})$ for a suitable dominant weight $\grl_i$.

The result in \cite{DS} implies that $\grG(X,\calL_\grl)$ has a good
filtration. To be more precise first of all one shows that for any
$\lambda\in\Pi$ the map
$$\grG(X,\calL_\grl)\to \grG(\XD,\calL_\grl)$$
is surjective. 

Now for any  $\grl,\mu\in\Pi$ set $\mu \leq_\grs \grl$ if $\grl-\mu \in
\mN[\tDe]$.

Notice that, for $\tal \in \tDe$, there is a $G$ invariant section
$s_{\tal}$ of $\calL_{\tal}$, unique up to multiplication by a non
zero scalar, whose divisor is $X_\tal$.

If $\nu = \sum_\tal n_\tal \tal \geq_\grs 0$ consider  $s^\nu \defi \prod
_\tal s_\tal^{n_\tal}$. If $\grl \geq_\grs \mu$, the
multiplication by $s^{\grl-\mu}$ defines a $G$ equivariant injective
map from $\grG(X,\calL_\mu)$ to $\grG(X,\calL_\grl)$ whose image we
denote by $s^{\grl-\mu}\grG(X,\calL_\mu)$.

For any    $\nu \geq_\grs 0$ we now set
$$ F_{\grl,\nu} = \sum_{\mu \leq_\grs \grl-\nu}s^{\grl-\mu}\grG(X,\calL_\mu).$$
The $ F_{\grl,\nu}$ form  a decreasing filtration of $\grG(X,\calL_\grl)$ by
$G$ submodules. In \cite{DP1, DS} the associated graded is computed
and we have that the division by $s^{\nu}$ and restriction of sections
to $\XD$ gives an isomorphism
$  F_{\grl,\nu}/(\sum_ {\nu' >_\grs \nu} F_{\grl,\nu'}) \isocan 
V_{\grl-\nu}^*$ so that 
\begin{equation}\label{lagrada}\gr_F \grG(X,\calL_\grl)=
\bigoplus_{\mu\in \Pi^+ \mand \mu\leq_\grs \grl} s^{\grl-\mu}
V^*_\mu.\end{equation}

Clearly the filtration $F_{*,*}$ respects multiplication. This
implies that the associated graded
$$\gr_F R(X) := \bigoplus_{\grl \in \Pi} \gr_F \grG(X,\calL_\grl)$$ 
of $R(X)$ has a ring structure. Furthermore (\ref{lagrada})
gives a ring isomorphism
\begin{equation}\label{eq:grRX}
\gr_F R(X) \isocan R_\Pi(\XD)[s_{\tal_1},\dots,s_{\tal_\ell}].   
\end{equation}

In the previous section we have studied spherical weights. We want to
prove now that $\grl$ is spherical precisely when $\calL_\grl$ is spherical.

The homomorphism $\pi_X^*:\Pic(X)\to \Pic(G/\Hz)$ can be identified with
the homomorphism $\chi:\Pi\lra \grL_\Hz$ associating to $\grl
\in \Pi$, the character $\chi(\lambda)$ by which $\Hz$ acts on the
fiber of $\calL_\grl$ on the point $x_0$.

We claim that $\chi(\grl)=\chi_\Hz(\grl)$ is the dual of the character by
which $\Hz$ acts on the line fixed by $\Had$ in $V_\grl^*$ introduced
in the previous section.  To see this we may assume $\grl \in \Pi^+$.

We fix $\grl \in \Pi^+$ and $\calL=\calL_\grl$.  In this case $\calL$
has no base points over $\XD$, so, since $\XD$ is the unique closed
orbit in $X$, by Theorem \ref{teo:X} $iv)$ it also has no base points
over $X$. Thus by the reductivity of $\Had$, there is a positive
integer $m$ and a line $L\subset \Gamma(X,\calL^m)$ stable under the
action of $\Had$ and such that if $\sigma\in L-\{0\}$, $\sigma$ does
not vanish on $x_0$.  It follows that $\Hz$ acts on $L$ by the
character $-m\chi(\lambda)$.

Take the filtration $\{F_{m\lambda,\nu}\}$ of $\Gamma(X,\calL^m)$.
There is a unique submodule $ F_{m\grl,\nu}$ such that $L\subset
F_{m\grl,\nu}-\sum_ {\nu' >_\grs \nu} F_{m\grl,\nu'}$.  So $L$ has non
zero image in $V_{m\grl-\nu}^*$ and thus coincides with the unique
$\Had$ stable line in $V^*_{m\grl-\nu}$. We deduce that
$m\chi(\lambda)=\chi_\Hz(m\grl-\nu)$. Since $\nu$ lies in
  $\mZ[\tPhi].$ we have $\chi_\Hz(m\grl-\nu)=\chi_\Hz(m\grl)$, hence
$m\chi_\Hz(\grl)=m\chi(\grl)$.  Finally since $\Hz$ is connected its
character group has no torsion and we get that $\chi_\Hz(\grl)=\chi(\grl)$
as desired. We deduce the following lemma.

\begin{lem}\label{lem:PicGH} Let $\grl \in \Pi$ then
  $\pi_X^*(\calL_\grl)$ is trivial if and only if $\grl \in \Omega$.
  Moreover  if $\pi_H : G/H \lra X$ is defined by
  $\pi_H(gH)=g\cdot x_0$ then $\pi_H^*(\calL_\grl)$ is trivial if      
  and only if $\grl\in \Omega_H$.
\end{lem}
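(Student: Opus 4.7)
The plan is to reduce both statements to a computation of the character by which the relevant group acts on the fiber of $\calL_\grl$ at the base point $x_0$, and then, for part $ii)$, to apply a direct generalization of the argument already carried out for $\Hz$ in the paragraphs preceding the lemma.

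Since the paper has already recalled that line bundles on $G/H$ are parametrized by the characters of $H$ via the action on the fiber over $eH$, writing $\chi'_H(\grl) \in \grL_H$ for the character by which $H$ acts on the fiber of $\calL_\grl$ at $x_0$, we see that $\pi_H^*(\calL_\grl)$ is trivial if and only if $\chi'_H(\grl) = 0$. For $H=\Hz$ this already yields part $i)$: the discussion preceding the statement shows $\chi'_{\Hz}(\grl) = \chi_\Hz(\grl)$, whose kernel on $\Pi$ is $\Omega$.

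For part $ii)$ I would adapt that argument essentially verbatim. Given $\grl\in\Pi^+$, choose a positive integer $m$ and an $\Had$-stable line $L\subset\grG(X,\calL_\grl^m)$ of sections not vanishing at $x_0$; such $L$ exists by the reductivity of $\Had$ together with the base-point-freeness of $\calL_\grl$ over $\XD$ (extended to $X$ via Theorem \ref{teo:X} $iv)$). Since $L$ is $\Had$-stable and $H\subset\Had$, it is $H$-stable, and $H$ acts on $L$ in two compatible ways: on the one hand by $-m\chi'_H(\grl)$, via evaluation at $x_0$; on the other hand by $-\chi_H(m\grl-\nu)$, because the image of $L$ in the graded piece $V^*_{m\grl-\nu}$ of the filtration $\{F_{m\grl,\nu}\}$ is the unique $\Had$-fixed line of that module, on which $H$ acts by $-\chi_H(m\grl-\nu)$ by the very definition of $\chi_H$. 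Since $\nu\in\mZ[\tPhi]=\Omega_{\Had}\subset\Omega_H\subset\ker\chi_H$, we get $\chi_H(m\grl-\nu)=m\chi_H(\grl)$, so $\chi'_H(\grl)=\chi_H(\grl)$ for all $\grl\in\Pi^+$, and hence for all $\grl\in\Pi$ by linearity.

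To conclude it suffices to invoke the fact recalled in the discussion preceding Proposition \ref{lem:splitta} that $\ker(\chi_H:\Pi\to\grL_H) = \Omega_H$; this immediately yields the characterization in part $ii)$. The main obstacle is the second description of the $H$-action on $L$: it relies on the good-filtration description (\ref{lagrada}) that was the nontrivial input already in the treatment of $\Hz$, but it carries over to $H$ without modification because $L$ was chosen $\Had$-stable.
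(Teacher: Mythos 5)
Your argument is, in substance, the one the paper intends: the authors' proof of the second claim is a one-line reduction to Vust's theorem that implicitly relies on exactly the identification $\chi'_H(\grl)=\chi_H(\grl)$ you spell out, obtained by rerunning for $H$ the fiber-character computation carried out for $\Hz$ in the paragraph preceding the lemma. Part i) and the overall structure of your argument match the paper.

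There is, however, one step that is not a ``verbatim'' carry-over and that, as written, is a genuine gap: the passage from $m\chi'_H(\grl)=m\chi_H(\grl)$ to $\chi'_H(\grl)=\chi_H(\grl)$. For $\Hz$ the paper cancels the factor $m$ by observing that $\Hz$ is connected, so its character group has no torsion; that justification is unavailable for a general $H$ with $\Hz\subset H\subset \Had$, since a character of $H$ trivial on $\Hz$ is a character of the finite group scheme $H/\Hz$ and hence is torsion (already for $G=SL_2$ and $H=\Had$ one has $\grL_{\Had}\cong \mZ/2\mZ$). This is not a peripheral issue: the difference $\chi'_H(\grl)-\chi_H(\grl)$ lives precisely in the finite piece that distinguishes $\Omega_H$ from $\Omega$, so the cancellation is the whole content of part ii) beyond part i). In characteristic zero there is no problem, because linear reductivity of $\Had$ lets you take $m=1$; in positive characteristic the integer $m$ supplied by geometric reductivity is a power of $p$, which may annihilate exactly the torsion you need to detect (e.g.\ when $p$ divides the order of $\Omega/\Omega_H$). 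To close the gap you need either to control $m$ modulo this torsion or to produce, for $\grl\in\Omega_H^+$, an $H$-eigenvector of $\grG(X,\calL_\grl)$ itself (not of a power) that does not vanish at $x_0$. To be fair, the paper's own proof of the second claim is silent on this point as well, so your write-up faithfully reproduces its approach while making the delicate step visible.
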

\begin{proof}
  The first claim has just been proved. As for the second it follows
  since by Theorem \ref{teo:Vust} a character $\lambda$ lies in
  $\Omega_H\cap \Pi^+$ if and only if the line in $V_\grl^*$ stable
  under $\Had$ is pointwise invariant under $H$.
\end{proof}

\subsection{Toroidal compactifications and ring of definition}
\label{ssec:toroidali} 
An embedding $Y$ of $G/H$ is called
\emph{toroidal} if there exists a basepoint preserving $G$ equivariant map $\phi :Y\lra X$.

Presently we are going to explain their construction and show that
they are defined and flat over $\R$.

Let $L_\mR= \Hom (\grL_S, \mR )$ and $L\cech_\mR = \grL_S \otimes_\mZ
\mR$ bet its dual. The $S$, or $S_H$, toric varieties are described by fans
in $L_\mR$. In particular take the cochamber $C\subset L_\mR$ of
dominant elements with respect to $\tDe$ and let $\calT_H$ be the $S_H$
toric variety associated to $C$. $\calT_H$ has a natural $\R$ form
$\mcalT_H$. In particular  in the adjoint
case $\mcalT_\Had \isocan \mA^\ell_\R$.

Choose a $\R$ form of $G$ as in Section \ref{ssec:Z}.  Consider for
any $H$ the finite field extension $\mQ(G/\Had)\subset \mQ(G/H)$.
$\mathbb Q(G/\Had)$ is the field of rational functions on $\mbX$ and
we take $\mbX_H$ equal to the normalization of $\mbX$ in $\mQ(G/H)$.
Let $\phi_H:\mbX_H\to \mbX$ denote the normalization map and let $X_H
= \mbX_H(\mk)$. 

\begin{lem} $\mbX_H$ is a projective normal and Cohen-Macaulay
  embedding of $G/H$. $\phi_H$ is a finite flat morphism. In
  particular $\mbX_H$ is proper and flat over $\R$.
\end{lem}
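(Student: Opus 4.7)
The plan is to dispatch the formal properties first, reducing everything to a single substantive claim: that $\mbX_H$ is Cohen-Macaulay. Normality holds by construction (a normalization is normal). For finiteness of $\phi_H$, I would invoke the fact that $\mbX$ is of finite type over the excellent Dedekind domain $\R=\mZ[1/2]$, hence Nagata; consequently the normalization in the finite field extension $\mQ(G/\Had)\subset \mQ(G/H)$ is a finite morphism. Projectivity of $\mbX_H$ then follows from the projectivity of $\mbX$ together with finiteness of $\phi_H$, and properness over $\R$ is immediate from this. To identify the open orbit, observe that $\phi_H^{-1}(G/\Had)$ is the normalization of $G/\Had$ in $\mQ(G/H)$; since $\Had/H$ is finite and $G/H$ is smooth, the natural map $G/H\to G/\Had$ is finite and $G/H$ is already normal with the correct function field, so $\phi_H^{-1}(G/\Had)=G/H$ as open subschemes of $\mbX_H$.

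The central step is to prove Cohen-Macaulayness, from which flatness of $\phi_H$ will follow automatically: $\mbX$ is smooth over $\R$ hence regular, $\phi_H$ is finite with $0$-dimensional fibers, so once $\mbX_H$ is CM, miracle flatness (Hironaka--Matsumura) yields that $\phi_H$ is flat. Combined with the known flatness of $\mbX$ over $\R$, this will give flatness of $\mbX_H$ over $\R$. My strategy for CM is to exploit the local toroidal structure. By Theorem \ref{teo:X}$\,vii)$, the $S_\Had$-toric slice satisfies $X_S^+\isocan \mA^\ell$ over $\R$; a standard slice/transversality argument for the wonderful compactification produces, in a neighborhood of each point of $X$, a smooth (or étale) morphism whose target involves $X_S^+$, i.e., the local geometry of $\mbX$ is controlled by $\mcalT_\Had\isocan \mA^\ell_\R$. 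Under normalization in $\mQ(G/H)$ the toric factor is replaced by the normal affine toric variety $\mcalT_H$ corresponding to the lattice extension $\Omega_\Had\subset \Omega_H$ intersected with the dominant cone $C$: this is because the function field extension, after restricting to the open orbit of the slice, is precisely $\mQ(S_\Had)\subset \mQ(S_H)$, and normalization commutes with smooth (or étale) base change. Since normal affine toric varieties are Cohen-Macaulay, $\mbX_H$ is étale-locally CM, hence CM globally.

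The main obstacle is the local model step: establishing that étale-locally $\phi_H$ looks like the toric map $\mcalT_H\to\mcalT_\Had$. In characteristic zero this is part of Luna's slice machinery, but here I need a formulation that works uniformly over $\R$. I would carry this out by using the $\R$-form of $G$, $T$, $S$, and $B$ from Lemma \ref{lem:STZ}, together with the integral description of $X_S^+$ in Theorem \ref{teo:X}$\,v)$--$vii)$ to construct the slice explicitly near the closed $G$-orbit in $\mbX$, then propagating to all of $\mbX$ using the $\mbG$-action on $\mbX$. Once the local description is in place the remaining verifications are routine: normality and CM are étale-local; finiteness and flatness of $\mcalT_H\to\mcalT_\Had$ follow from the fact that $\Omega_H/\Omega_\Had$ is finite and $\mcalT_\Had=\mA^\ell_\R$ is regular (again by miracle flatness applied in the toric setting).
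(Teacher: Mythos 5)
Your proposal is correct and follows essentially the same route as the paper: both arguments reduce Cohen--Macaulayness to the local product structure $\mcalU \isocan X_S^+\times U$ (whose preimage in $\mbX_H$ is $\mcalT_H\times \mbU$, with $\mcalT_H$ CM as a normal toric variety over $\R$) and then deduce flatness of $\phi_H$ from the fact that a finite morphism from a Cohen--Macaulay scheme to a smooth one is flat. The only difference is presentational: the paper simply quotes the known covering of $X$ by $G$-translates of $X_S^+\times U$ rather than re-deriving a slice theorem, and treats finiteness and projectivity as immediate from the definitions.
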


\begin{proof} The projectivity and normality of $\mbX_H$ are
  clear 
  from the definitions. Let us show that $\mbX_H$ is Cohen-Macaulay.
  
  To see this, let us recall $X$ is covered by the $G$ translates of
  an open set $\mathcal U$ of the form $X_S^+\times U$ where $U$ is
  the unipotent radical of the parabolic $P\subset B$ such that $\XD
  \isocan G/P$. By Theorem \ref{teo:X} we have that $X_S^+$ and $U$
  are defined over $\R$ and so is the isomorphism $X_S^+ = \calT_\Had
  \isocan \mA^\ell$.  In particular the open set $\mathcal U$ is
  defined over $\R$ and we denote by  $\boldsymbol{\mathcal U}$   the
  associated subscheme of $\mbX$ and $\mbU$ the subgroup scheme of
  $\mbG$ defining $U$.

  It easily follows that $\mbX_H$ is covered by the the $\mbG(\R)$
  translates of the preimage $\boldsymbol{\mathcal U}_H$ of $\boldsymbol{\mathcal U}$ and that
  $\boldsymbol{\mathcal U}\isocan \mcalT_H \times \mbU$.  Since $\mcalT_H$ is
  Cohen-Macaulay, also $\boldsymbol{\mathcal U}$ is Cohen-Macaulay and everything
  follows.

  Since any finite morphism between a Cohen-Macaulay scheme and a
  smooth scheme is flat we deduce that $\phi_H$ is flat and all the
  other claims are clear.
\end{proof}

We are now going to follow the method of \cite{DP2} to build all
toroidal compactifications.  For each $\tal\in \tDe$ we have already
chosen a line bundle $\mathcal L_{\tal}$ on $X$ together with a $G$
invariant section $s_{\tal}\in \Gamma(X,\mathcal L_{\tal})$. We can
then consider the vector bundle $\mathcal V:=\oplus_{\tal
  \in\tDe}\mathcal L_{\tal}$ and the $G$ invariant section $ s:=
\oplus_{\tal \in\tDe } s_{\tal}$ of $\mathcal V$. Set
$\calV^*=\{v=(v_{\tal})\in \mathcal V\st v_{\tal}\neq 0 \ \forall\
\tal\in\tDe\} $. By our previous identifications $\calV^*$ is a
principal $S_\Had$ bundle.  If $Z$ is a  $S_\Had$ variety we
can take the associate bundle $\calV^*\times_{S_\Had}Z$ on $X$ with
fiber $Z$. In particular  $\mathcal V=\calV^*\times_{S_\Had}\mathbb
A^{\ell}$, where $S_\Had$ acts on $\mathbb A^{\ell}$ via the
characters $\tal\in\tDe$.

Now take $Z$ to be a $S_\Had$ embedding over $\mathbb A^{\ell}$. The
corresponding fan $F_Z$ is a (partial) decomposition of the
fundamental Weyl cochamber $C$.  The map $Z\to \mathbb A^{\ell}$ induces
a map $\calV^*\times_{S_\Had}Z\to \mathcal V$ and we define $X_Z$ as the
fiber product
$$
\begin{CD}
  X_Z @>{s_Z}>>  \calV^*\times_{S_\Had}Z \\
  @VVV @VVV\\
  X @>{s}>>  \calV   \\
\end{CD}
$$
The $G$ action on $\calV$ preserves   $\calV^*$ and commutes with the  $S_\Had$
action. So $G$ also acts on $\calV^*\times_{S_\Had}Z$, the map
$\calV^*\times_{S_\Had}Z\to \mathcal V$ is $G$ equivariant and
$X_Z$ is a $G$ variety.

In the case of a general $G/H$ we set $X_{H,Z}$ equal to the
normalization of $X_Z$ in the field of rational function on $G/H$.
We clearly have the cartesian diagram
$$
\begin{CD}
 X_{H,Z} @>\mu_Z>> X_Z \\
@VVV @VVV\\
X_H @>\mu>> X \\
\end{CD}
$$
In particular the morphisms $\mu_Z$ and $s_{H,Z}:=s_Z\mu_Z$ are flat.
One has \cite{DP2} 

\begin{teo}\label{teo:toroidali} 
\begin{enumerate}[\indent i)]
\item Every toroidal embedding of $G/H$ is of the form $X_{H,Z}$ for
  some $S_\Had$ embedding $Z$ over $\mathbb A^{\ell}$. In particular
  it is defined and flat over $\R$.
\item $X_{H,Z}$ is complete (resp. projective) if and only if the
  projection $Z\to \mathbb A^{\ell}$ is proper (resp. projective).
\item Every $G$ orbit in $X_{H,Z}$ is of the form $\mathcal
  O_{\mathcal K}:=(s_{H,Z})^{-1}( \calV^*\times_{S_H}\mathcal K)$ for
  a unique $S_\Had$ orbit $\mathcal K$ in $Z$.
\item Let $\mathcal F_Z$ be the fan in $L_\mR$ whose cones are the
  $\tW$ translates of the cones in $F_Z$. Then $\mathcal F_Z$ is the
  fan corresponding to $S_H$ embedding $Z_H:=({X_{H,Z}})_{S_H}$.
  Furthermore each $G$ orbit in $X_{H,Z}$ intersects $Z_H$ in a unique
  $N_\Hz(S)$ orbit (notice that in accord with $iii)$ these orbits are
  in canonical bijection with $S_\Had$ orbits in $Z$).
\item The divisors in $\Delta_{X_{H,Z}}$ are defined over $\R$.
\end{enumerate}
\end{teo}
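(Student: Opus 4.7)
The plan is to reduce everything to statements about the adjoint case $H = \Had$ handled in \cite{DP2}, and then pull results along the finite cover $X_{H,Z} \to X_Z$ by normalization in $\mk(G/H)$. First I would treat the case $H = \Had$: the universal toroidal construction $X_Z$ is explicitly a fibre product with the total space of an $S_\Had$-bundle of toric varieties, and the section $s$ is $G$-invariant, so $G$ acts on $X_Z$ and on $X_{H,Z}$ compatibly. The general case then follows because normalization is functorial, commutes with the $G$-action, and preserves the fan-theoretic combinatorics thanks to part v) of Theorem \ref{teo:X}.

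For part i), given a toroidal embedding $Y$ of $G/H$ with equivariant map $Y\to X$, the open affine $\mathcal U=X_S^+\times U\isocan \mA^\ell\times U$ of $X$ pulls back to a toric fibration over $U$; intersecting $Y$ with its preimage over $\mathcal U$ produces an $S_\Had$-equivariant variety $Z$ over $\mA^\ell$, and patching over the translates of $\mathcal U$ by $G$ realizes $Y=X_{H,Z}$. Flatness and the $\R$-structure are inherited from those of $\mbX$ and of $Z$ (toric varieties are defined over $\mZ$, hence over $\R$). For part ii), the map $X_{H,Z}\to X_H$ is, by construction, the base change along $s:X_H\to \calV$ of $\calV^*\times_{S_\Had}Z\to \calV$, and the latter is proper (respectively projective) precisely when $Z\to \mA^\ell$ is; since $X_H$ itself is projective, properness/projectivity of $X_{H,Z}$ is equivalent to the corresponding property of $Z\to\mA^\ell$.

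For parts iii) and iv), the key input is that $s$ is $G$-invariant, so the $G$-action on $X_Z$ is entirely carried by the $X$-factor while the $S_\Had$-toric fiber direction is $G$-fixed. Consequently a $G$-orbit in $X_Z$ lives over a unique $G$-orbit $\calO_\Gamma$ in $X$ (described by a subset $\Gamma\subset \tDe$ via Theorem \ref{teo:XI}), and over that orbit its fiber is determined by a single $S_\Had$-orbit $\mathcal K$ in $Z$ mapping into the associated stratum of $\mA^\ell$. Normalizing in $\mk(G/H)$ does not introduce or merge orbits (since $\phi_H$ is finite and equivariant, and $Z_H=(X_{H,Z})_{S_H}$ differs from $Z$ only by the $\tW$-cover giving the fan $\mathcal F_Z$). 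This identifies the $S_H$-fan of $Z_H$ and gives the bijection between $G$-orbits in $X_{H,Z}$ and $N_\Hz(S)$-orbits in $Z_H$, as claimed.

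Finally, for part v) one observes that each divisor in $\Delta_{X_{H,Z}}$ is either the strict transform of a divisor of $X_H$ (which is defined over $\R$ by Theorem \ref{teo:X} ii) together with the fact that $\phi_H$ is defined over $\R$) or comes from an invariant divisor of the toric variety $Z$ (which is automatically defined over $\R$ since toric divisors correspond to one-dimensional cones in the fan). The main technical obstacle is the verification in iii)--iv) that normalization behaves cleanly on orbit closures and preserves the toric fan of $Z_H$; this is where one needs to combine the smoothness of $\mbX$, the Cohen--Macaulay property of $\mbX_H$ established in the preceding lemma, and the fact that the constructions localize to the $G$-translates of the open affine $\boldsymbol{\mathcal U}\isocan \mcalT_H\times\mbU$ defined over $\R$.
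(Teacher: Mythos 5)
Your overall plan --- reduce to the adjoint case treated in \cite{DP2} and transport the statements along the finite morphism $X_{H,Z}\to X_Z$ --- is exactly the paper's, and your sketches of parts ii)--v) are consistent with what the authors intend (they explicitly leave those parts to the reader). The difficulty is in part i), the only part the paper actually argues, and the place where your execution departs from your own stated plan. You extract $Z$ from an arbitrary toroidal embedding $Y$ of $G/H$ by asserting that the preimage in $Y$ of the open cell $\mathcal U\isocan X_S^+\times U$ splits as a toric fibration over $U$, and then ``patching over the $G$-translates''. That splitting is a local structure theorem for embeddings of $G/H$ with $H$ an arbitrary, possibly non-reduced, group between $\Hz$ and $\Had$, in arbitrary odd characteristic; it is not supplied by \cite{DP2}, which only covers $G/\Had$, and it is essentially the content of what part i) asks you to prove, so it cannot simply be invoked. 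There is also a lattice discrepancy you never address: the toric slice of $Y$ is naturally an $S_H$-variety, whereas the $Z$ in the statement must be an $S_\Had$-embedding over $\mA^\ell$; the two fans live in the same space $L_\mR$ but with respect to different lattices, and sorting this out is precisely the point of introducing $Z_H$ in part iv).

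The paper's argument for i) avoids both issues. Given $Y$, one forms the quotient by the finite group scheme $\Had/H$; this is a toroidal embedding of $G/\Had$, hence of the form $X_Z$ for some $S_\Had$-embedding $Z$ over $\mA^\ell$ by \cite{DP2}. This is how $Z$ is produced, with no local analysis of $Y$ required. The induced morphism $Y\to X_Z$ is finite and induces the extension $\mk(G/\Had)\subset\mk(G/H)$ on function fields, so by the universal property of normalization it factors through a $G$-equivariant, finite, birational morphism $Y\to X_{H,Z}$; since both varieties are normal, this is an isomorphism. I would rewrite your part i) along these lines; your treatment of ii)--v), which the paper does not spell out either, can then stand essentially as written.
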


\begin{proof} All these statements are proved in \cite{DP2} in the
  case of an embedding of $G/\Had$.

  To see $i)$ in the general case take a toroidal embedding $Y$ of
  $G/H$. Let us take the quotient by the finite group scheme
  $\Had/H$. We get an embedding of $G/\Had$ which is obviously
  toroidal and hence of the form $X_Z$ for a suitable $S_\Had$
  embedding $Z$. If we now consider $X_{H,Z}$ we get a morphism $Y\to
  X_{H,Z}$ which is $G$ equivariant birational and finite. Since both
  $Y$ and $X_{H,Z}$ are normal it follows that the above morphism is a
  $G$ equivariant isomorphism.

  The proof of the remaining statements is now easy and we leave it to
  the reader.
\end{proof}

\begin{oss}\label{oss:123} 
  1) Let us point out that our result in particular implies that the
  $G$ orbits in $X_{H,Z}$ are exactly the preimages of $G$ orbits in
  $X_Z$.

  2) It is not hard to see that $X_{H,Z}$ is smooth if and only if
  $Z_H$ is smooth. Equivalently if and only if the $S_H$ embedding
  whose fan is $F_Z$ is smooth. This depends very much on the lattice
  $\Hom (\grL_{S_H}, \mathbb Z)\subset L_\mR$.

  3) There exists an open affine covering $\{U_i=\Spec R_i\}$ of the
  $\R$ form of $X_{H,Z}$ such that $R_i$ are $\R$ free modules and
  $U_i\cap U_j = \Spec R_{ij}$ where $R_{ij}$ is also a $\R$ free
  module.
\end{oss}

\subsection{Line bundles on a toroidal embedding} In this section we
assume $Y$ to be a smooth toroidal compactification of $G/H$ with the
$\R$ structure described in the previous section.

We have the following Lemma about the structure of the Picard group of
$Y$.

\begin{lem}
  \label{lem:PicY}Let $Y$ be a equivariant smooth toroidal
  compactification of $G/H$ then
  \begin{enumerate}[\indent i)]
  \item We have the following sequence describing the Picard group of
    $Y$:
    \begin{equation*}\label{ilpicardo}
      \begin{CD}
        0 @>>> \bigoplus_{D\in \grD_Y} \mZ \calO(D) @>>> \Pic(Y)
        @>{\mj_Y^*}>> \grL_H @>>> 0.
      \end{CD}
    \end{equation*}
  \item $\SPic_0(Y) = \bigoplus_{D\in \grD_Y} \mZ \calO(D)$.
  \item For each closed $G$ orbit $O$ of $Y$ consider the restriction
    $\mi_O^* : \SPic_0(Y) \lra \Pic(O)$ of line bundles to $O$. Then
    the product of these restriction maps $\mi^*: \SPic_0(Y) \lra
    \prod \Pic(O)$ is injective.
  \item All line bundles on $Y$ are defined and flat over $\R$.
\end{enumerate}
\end{lem}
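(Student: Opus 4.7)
The plan is to treat the four parts in turn. Parts \emph{(i)} and \emph{(ii)} reduce to the standard excision sequence for a smooth variety with open orbit; part \emph{(iii)} is a normal-bundle computation at a closed $G$-orbit; part \emph{(iv)} combines \emph{(i)} with Theorem~\ref{teo:toroidali}\emph{(v)} and the $\R$-structure from Section~\ref{ssec:Z}.

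\textbf{For \emph{(i)}.} I would apply the excision sequence for the smooth variety $Y$ with open complement $Y_0 = G/H$ and divisorial boundary $\bigcup_{D\in\grD_Y}D$, which yields
\[ K \lra \bigoplus_{D\in\grD_Y}\mZ D \lra \Pic(Y) \xrightarrow{\mj_Y^*} \Pic(G/H) \lra 0, \]
where $K\subset \mk(Y)^*/\mk^*$ consists of the classes of rational functions whose divisor is supported on $\partial Y$. Two classical inputs identify the outer terms. First, $\Pic(G/H)\isocan \grL_H$: since $G$ is semisimple simply connected one has $\Pic(G)=0$, so line bundles on $G/H$ are classified by one-dimensional characters of $H$. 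Second, $\calO(G/H)^*=\mk^*$: since $G$ is semisimple, $\calO(G)^*=\mk^*$, and taking $H$-invariants forces $K=0$, giving injectivity on the left.

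\textbf{For \emph{(ii)}.} This is immediate from \emph{(i)}: $\calL\in\SPic_0(Y)$ iff $\mj_Y^*\calL$ is the trivial character in $\grL_H$, iff by exactness $\calL\in\bigoplus_D\mZ\calO(D)$.

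\textbf{For \emph{(iii)}.} Fix a closed $G$-orbit $O\subset Y$. For $D\in\grD_Y$ with $D\not\supset O$, the intersection $D\cap O$ is a proper $G$-stable closed subset of the orbit $O$, hence empty, so $\calO(D)|_O$ is trivial. For $D\supset O$, Theorem~\ref{teo:toroidali} identifies $O$ with a maximal cone $\sigma_O$ of the fan $\mathcal F_Y$, and the divisors through $O$ biject with the rays of $\sigma_O$. Smoothness of $Y$ makes $\sigma_O$ simplicial, and the normal bundle splits $G$-equivariantly as $N_{O/Y}\isocan \bigoplus_{D\supset O}\calO(D)|_O$; each summand is a nontrivial $G$-linearized line bundle on $O\isocan G/P$ whose associated character of $P$ is read off from the corresponding ray via the restricted root data, and distinct rays yield distinct characters. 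Thus $\mi_O^*$ detects the coefficients of all divisors containing $O$; since $Y$ is complete every $D$ is contained in some closed orbit, so $\mi^*$ is injective.

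\textbf{For \emph{(iv)}.} Using the exact sequence of \emph{(i)}, it is enough to lift each generator to an $\R$-defined class. The boundary divisors $D\in\grD_Y$ are $\R$-defined by Theorem~\ref{teo:toroidali}\emph{(v)}, hence so are the $\calO(D)$. For the characters: each $\chi\in\grL_H$ is $\R$-defined by Section~\ref{ssec:Z}, giving an $\R$-defined line bundle $\mbG\times_{\mbH}\R_\chi$ on $\mbG/\mbH$. To produce an $\R$-defined extension to $\mbY$ I would invoke the $\R$-affine cover $\{U_i\}$ from Remark~\ref{oss:123}\emph{(3)}: on each $U_i$ the bundle extends because $U_i$ is normal with divisorial complement of $U_i\cap(G/H)$, and the gluing data live in the $\R$-free units of $R_{ij}$; any two such extensions differ by an element of $\bigoplus_D\mZ\calO(D)$, already $\R$-defined. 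Flatness over $\R$ is automatic since $\mbY$ is flat over $\R$ and line bundles are locally free. The main obstacle is precisely this last step: constructing the $\R$-extension of a character line bundle across $\partial\mbY$; one may alternatively realize many characters via pullback $\phi^*\calL_\grl$ from the wonderful compactification (which is $\R$-defined by Theorem~\ref{teo:X}\emph{(v)}) through $\chi_H:\Pi\to\grL_H$, and handle the residual cokernel using the $\R$-affine cover.
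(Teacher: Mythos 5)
Parts \emph{(i)} and \emph{(ii)} are correct; your excision argument (with $\calO(G/H)^*=\mk^*$ supplying left exactness) is a legitimate alternative to the paper's route, which instead proves injectivity by identifying $\Pic(Y)$ with $\Pic_G(Y)$ and restricting to the toric slice $Y_S^+$ via Theorem \ref{teo:toroidali}. In \emph{(iii)} your conclusion is right but the stated justification, ``distinct rays yield distinct characters,'' is too weak: to recover the coefficients $a_D$ for $D\supset O$ from $\sum_{D\supset O} a_D\,\calO(D)\ristretto_O$ you need the corresponding characters to be \emph{linearly independent} in $\Pic(O)$, not merely pairwise distinct. What actually saves the argument is smoothness: the classes $\gra_{D,\tau}$ for $v_D\in\tau$ form a basis of $\grL_{S_H}\isocan\Omega_H$, and $\Omega_H\subset\Pi$ injects into the Picard group of the closed orbit by Theorem \ref{teo:XI}. (Also, every $D$ \emph{contains} some closed orbit, not the reverse.) This is essentially the paper's argument, which phrases it as: an $S_H$-equivariant line bundle on $Y_S^+$ is determined by its restrictions to the $S_H$-fixed points.

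The genuine gap is in \emph{(iv)}, and you flag it yourself. You have no working argument for extending an $\R$-defined character line bundle from the $\R$-form of $G/H$ across $\partial\mbY$: the local-extension-and-gluing suggestion begs the question of why the transition data can be taken in $R_{ij}^*$, and the fallback via $\phi^*\calL_\grl$ only reaches the image of $\chi_H:\Pi\to\grL_H$, which need not be all of $\grL_H$, with the ``residual cokernel'' left entirely unhandled. The missing idea, which is how the paper closes this, is to change generators: $\Pic(G/H)=\grL_H$ is generated by the classes of the codimension-one irreducible $B$-orbits, and these are defined over $\R$ by Lemma 2.7 in \cite{DS} together with Lemma \ref{lem:STZ}. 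Their closures in $Y$, together with the boundary divisors $D\in\grD_Y$ (defined over $\R$ by Theorem \ref{teo:toroidali} v)), are $\R$-defined divisors whose classes generate $\Pic(Y)$ by the exact sequence of \emph{(i)}; since $\calO$ of an $\R$-defined divisor on the flat scheme $\mbY$ is defined and flat over $\R$, \emph{(iv)} follows with no extension problem to solve.
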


\begin{proof}
  The only thing we need to show to prove $i)$ is the injectivity of
  the map from $\bigoplus_{D\in \grD_Y} \mZ \calO(D)$ to $\Pic(Y)$.
  Notice that, since $G$ is semisimple and simply connected and
  $\Pic(Y)$ discrete, every line bundle has a unique $G$
  linearization. Thus $\Pic(Y)\isocan \Pic_G(Y)$. It follows that it
  is enough to prove the injectivity of the map from $\bigoplus_{D\in
    \grD_Y} \mZ \calO(D)$ to $\Pic_G(Y)$. Consider the restriction map
  $\Pic_G(Y)\to\Pic_{S_H}(Y^+_S)$. $\Pic_{S_H}(Y^+_S)$ is isomorphic
  to $\bigoplus \mZ\calO(D')$ where the sum is take over all $S_H$
  equivariant divisors $D'$. So the claim follows from Theorem
  \ref{teo:toroidali}.  Since $\SPic_0 $ is the kernel of $\mj_Y^*$
  this also proves $ii)$.
  
  $iii)$ follows from the previous considerations and the fact that,
  up to isomorphism a $S_H$ equivariant line bundle on $Y^+_S$ is
  completely determined by its restriction to the closed orbits, that
  is the $S_H$ fixpoints in $Y^+_S$.

  Finally let $D \in \Delta _Y$.   
  By Theorem \ref{teo:toroidali} v) it is defined over
  $\R$. We know that $\grL_H=\Pic(G/H)$ is generated
  by the codimension $1$ irreducible $B$ orbits in $G/H$ and that
  these orbits are defined over $\R$ by Lemma 2.7 in \cite{DS} and
  Lemma \ref{lem:STZ}. Thus $iv)$ follows from $i)$.
\end{proof}

Let $F_Y$ be the fan associated to the toric variety $Y^+_S$ and let
$F_Y(i)$ be the set of faces of $F_Y$ of dimension $i$. In particular
the closed orbits of $Y$ are parametrized by $F_Y(\ell)$ while
$F_Y(1)$ can be identified with $\Delta_Y$ the set of $G$ invariant
divisors.  For each $\rho \in F_Y$ we set $y_\rho: = y_\eta$ for
$\eta$ a generic element in $\rho$ and denote by $O_\rho = G\,y_\rho$
the associated $G$ orbit.

By Theorem \ref{teo:toroidali} and the description of the equivariant
Picard group of a toric variety we have the following description of
the strictly spherical line bundles on $Y$:
\begin{equation}\label{ilpicdiT}
  \SPic_0(Y)=\{\bgrl = (\grl_\tau) \in \prod_{\tau \in F_Y(\ell)} \Omega_H
  \st \grl_\tau = \grl_{\tau'} \text{ on } \tau \cap \tau'\}.
\end{equation}

We can think of $\bgrl$ as a real valued function on the Weyl
cochamber $C$ which coincides with the linear form $\grl_\tau$ on the
face $\tau$.  We denote by $\calL_\bgrl$ a line bundle whose class is
given by $\bgrl$.  In particular we can describe in this way the line
bundles $\calO(D)$ for each divisor $D\in \grD_Y$.  Indeed let $v_D
\in \grL\cech_{S_H}$ be a non divisible element of $\grL_{S_H}$ in the
$1$ dimensional face of $F_Y$ associated to $D$. For each $\tau \in
F_Y(\ell)$ notice that, since $Y$ is smooth, the set $\{v_{D'} \st D'
\in \grD_Y\} \cap \tau$ is a basis of $\grL\cech_{S_H}$.

So we can define $\gra_{D,\tau} \in \grL_{S_H}$ to be the weight which
is equal to zero if $v_D \notin \tau$ while if $v_D \in \tau$ it is
$1$ on $v_D$ and zero on each $v_{D'}\in \tau$ with $D'\neq D$. It is
then easy to see that $\bgra_D = (\gra_{D,\tau})_{\tau \in F_Y(\ell)}$
is the class of $\calO(D)$ in $ \SPic_0(Y)$.  \medskip

Now we want to describe the sections of a strictly spherical line
bundle on $Y$ in the case of characteristic $0$. The proofs are very
similar to the one given in \cite{Bifet}. A description of the section
of a line bundle on a general spherical variety is given in
\cite{Brion} and we could have used that result as well. However the
description we are going to give is more suited to our purpose.

For $D \in \grD_Y$ let $s_D$ be a $G$ invariant section of
$\grG(Y,\calO(D))$ vanishing on $D$. If $\bgrl = \sum_D a_D \bgra_D$
we set $s^\bgrl = \prod_D s_D^{a_D}$. Also for a given $\mu \in
\Omega^+_H$ consider the line bundle $\phi^*(\calL_\mu)$ where
$\phi:Y\lra X$ is the $G$ equivariant projection from $Y$ to $X$.
This line bundle corresponds to the element $\bmu \in \SPic_0(Y)$ with
$\mu_\tau = \mu$ for all $\tau\in F_Y(\ell)$ under the identification
of $\Omega_H$ with $\grL_{S_H}$ given by Lemma \ref{lem:sferici} v).
In particular $V_\mu^*$ is a submodule of $\grG(Y,\phi^*(\calL_\mu))$.

For $\bgrl \in \SPic_0(Y)$ set
\begin{align*}
\calA(\bgrl) &= \{\mu \in \Omega_H^+ \st \forall \tau \in
F_Y(\ell),\  \grl_\tau - \mu =  \sum _{v_D
  \in \tau} a_D \bgra_D \text{ with } a_D \geq 0 \} \\
    & = \{ \mu \in \Omega_H^+ \st \mu \leq \bgrl \text{ on } C\}
\end{align*}

We then have the following Theorem whose proof is completely analogous  to the one
given in \cite{Bifet}.
\begin{teo}\label{teo:sezioniY}Assume $Y$ to be a smooth toroidal
  compactification of $G/H$ and assume the field to be of
  characteristic zero and let $\bgrl \in \SPic_0(Y)$ then
$$
\grG(Y,\calL_\bgrl) = \bigoplus_{\mu \in \calA(\bgrl)} s^{\bgrl - \bmu}V_\mu^*.
$$
\end{teo}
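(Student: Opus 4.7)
The proof parallels that of Bifet \cite{Bifet}, reducing the problem to a combinatorial condition on the toric slice $Y_S$. For the inclusion $\supset$, fix $\mu\in\calA(\bgrl)$. By definition $\bgrl-\bmu=\sum_{D}a_D\bgra_D$ with all $a_D\ge 0$, so $s^{\bgrl-\bmu}=\prod_D s_D^{a_D}$ is a regular $G$-invariant section of $\calL_{\bgrl-\bmu}$. Since $\Omega_H^+\subset\Omega^+\subset\Pi^+$, the inclusion $V_\mu^*\subset\grG(X,\calL_\mu)$ coming from formula (\ref{lagrada}) pulls back along $\phi:Y\to X$ to place $V_\mu^*$ inside $\grG(Y,\calL_\bmu)$. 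Multiplication by $s^{\bgrl-\bmu}$ then embeds $s^{\bgrl-\bmu}V_\mu^*$ as a $G$-submodule of $\grG(Y,\calL_\bgrl)$, and the sum over $\mu\in\calA(\bgrl)$ is direct because distinct $\mu$ give rise to distinct $G$-isotypic components in $\mk[G/H]$.

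For the inclusion $\subset$, the fact that $\calL_\bgrl$ is trivial on the dense open set $G/H$ (since $\bgrl\in\SPic_0(Y)$) means that restriction to $G/H$ followed by division by the nowhere-zero rational section $s^\bgrl$ identifies $\grG(Y,\calL_\bgrl)$ with a $G$-submodule of $\mk[G/H]$. In characteristic zero, complete reducibility combined with the Peter--Weyl decomposition for the $H$-spherical homogeneous space gives a $G$-isomorphism $\mk[G/H]=\bigoplus_{\mu\in\Omega_H^+}V_\mu^*$, each isotypic component of multiplicity one. Hence $\grG(Y,\calL_\bgrl)=\bigoplus_{\mu\in\calS}s^{\bgrl-\bmu}V_\mu^*$ for some subset $\calS\subset\Omega_H^+$, and the previous step already gives $\calA(\bgrl)\subset\calS$.

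It remains to show $\calS\subset\calA(\bgrl)$: if $s^{\bgrl-\bmu}V_\mu^*$ lies in $\grG(Y,\calL_\bgrl)$, then the coefficients $a_D$ in $\bgrl-\bmu=\sum a_D\bgra_D$ must all be nonnegative. By Theorem \ref{teo:toroidali}, regularity along each $G$-invariant divisor $D\in\grD_Y$ can be tested on the toric slice $Y_S^+$, where $\calL_\bmu$ becomes the $S_H$-linearized toric line bundle determined by $\mu$ and where the spherical vector of $V_\mu^*$ restricts to the corresponding extremal $S_H$-character. A standard toric calculation (cf.~\cite{Bifet}) then shows that the regularity of $s^{\bgrl-\bmu}V_\mu^*$ along $D$ is equivalent to $a_D\ge 0$, and intersecting these conditions over all $D\in\grD_Y$ yields precisely $\mu\in\calA(\bgrl)$. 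The main obstacle is this toric order-of-vanishing computation: one must verify that the minimum vanishing order of a nonzero element of $V_\mu^*\subset\grG(Y,\calL_\bmu)$ along each $D$ is exactly the one dictated by the toric class of $\calL_\bmu$, so that no accidental cancellation can rescue regularity when some $a_D<0$.
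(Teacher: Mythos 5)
Your argument is correct and is essentially the proof the paper intends: the paper gives no details here, asserting only that the proof is ``completely analogous to the one given in \cite{Bifet}'', and your scheme --- embedding $\grG(Y,\calL_\bgrl)$ into the multiplicity-free $G$-module $\mk[G/H]=\bigoplus_{\mu\in\Omega_H^+}V_\mu^*$ via the trivialization by $s^{\bgrl}$ and then testing regularity of each candidate summand $s^{\bgrl-\bmu}V_\mu^*$ divisor by divisor on the toric slice --- is exactly Bifet's. The one step you flag but do not carry out (that the minimal order of vanishing along each $D\in\grD_Y$ of elements of $V_\mu^*\subset\grG(Y,\calL_{\bmu})$ is zero, so that no cancellation saves regularity when some $a_D<0$) does hold: the copy of $V_\mu^*$ in $\grG(X,\calL_\mu)$ restricts isomorphically onto $\grG(\XD,\calL_\mu)$ on the closed orbit, hence is base-point free on $X$ and, after pulling back along $\phi$, on $Y$, so it has no divisorial base locus.
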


From the above result we can deduce, as in \cite{Bifet} \S 4.2 and
\cite{Ruzzi} the following Corollary.  Let $\Omega_H^{++}$ be the set
of elements of $\Omega_H$ that are in the interior of the Weyl cochamber
$C$.

\begin{cor}\label{cor:sezioniY}Let $Y$ be a smooth toroidal compactification of
  $G/H$, let $\bgrl \in \SPic_0(Y)$ and assume the field to be of
  characteristic zero. Then
  \begin{enumerate}[\indent i)]
  \item For every $\mu \in \grL^+$  $V_\mu^*$ is an irreducible summand
    of $\grG(Y,\calL)$ if and only if
    $\grG(Y_S,\calL\ristretto_{Y_S})$ has a section of $S_H$ weight
    equal to $\mu$.
  \item For every line bundle $\calL$ generated by global sections,
    the restriction map $$\grG(Y,\calL)\to
    \grG(Y_S,\calL\ristretto_{Y_S})$$ is surjective.
  \item $\calL_\bgrl$ is an ample line bundle if and only if it is
    very ample.
  \item $\calL_\bgrl$ is an ample line bundle if and only if
    $\grl_\tau \in \Omega_H^{++}$ and $\grl_\tau < \grl_{\tau'}$ on
    $\tau'\senza \tau$ for all faces $\tau$ and $\tau'$ of $F_Y$ of
    maximal dimension.
  \end{enumerate}
\end{cor}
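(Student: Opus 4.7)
The plan is to deduce all four statements from Theorem~\ref{teo:sezioniY}, combined with the description of $Y_S$ as a toric variety for $S_H$ and with standard facts about equivariant ampleness on $G$-varieties.

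For (i), I would restrict the decomposition $\grG(Y,\calL_\bgrl) = \bigoplus_{\mu \in \calA(\bgrl)} s^{\bgrl - \bmu}V_\mu^*$ to $Y_S$. By Corollary~\ref{cor:invV} each summand $V_\mu^*$ contains an $H$-fixed spherical vector whose restriction to $Y_S$ is a nonzero section of $S_H$-weight $\mu$. Conversely, $\calL\ristretto_{Y_S}$ is an $S_H$-linearized line bundle on the toric variety $Y_S$, so its sections decompose as a direct sum of one-dimensional $S_H$-weight spaces indexed by the lattice points of the polyhedron $P_\calL$ cut out by the piecewise linear function $\bgrl$ on the support $\tW\cdot C$ of the fan of $Y_S$. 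Because $\bgrl$ is $\tW$-invariant (as $\calL$ is $G$-equivariant) and because $\Omega_H^+$ corresponds, under the identification $r:\Omega_H\isocan \grL_{S_H}$ of Lemma~\ref{lem:sferici}~v), to the lattice points of $\Omega_H$ lying in $C$, the dominant weights in $P_\calL$ are exactly the elements of $\calA(\bgrl)$, giving the equivalence.

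For (ii), the restriction map is $\tW$-equivariant, so by (i) its image contains every $\tW$-translate of the weight-$\mu$ line for $\mu\in\calA(\bgrl)$. When $\calL$ is globally generated, the polyhedron $P_\calL$ equals the convex hull of the $\grl_\tau$ together with its $\tW$-orbit, and every lattice point of $P_\calL$ is $\tW$-conjugate to an element of $\calA(\bgrl)$; surjectivity follows.

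For (iii) and (iv), I would use the criterion that a $G$-linearized line bundle on a complete $G$-variety with finitely many orbits is very ample iff it is generated by global sections and its restriction to every closed $G$-orbit is very ample. Each closed $G$-orbit $O_\tau$ of $Y$ lies above the unique closed orbit of $X$ and is a flag variety on which $\calL_\bgrl$ restricts to the line bundle associated to the character $\grl_\tau$; this is ample iff $\grl_\tau$ is regular, i.e.\ $\grl_\tau\in\Omega_H^{++}$. Ampleness on the toric slice $Y_S$ corresponds, by the standard criterion, to strict convexity of the $\tW$-invariant piecewise linear function $\bgrl$, which is precisely the condition $\grl_\tau<\grl_{\tau'}$ on $\tau'\setminus \tau$. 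Conversely, when both conditions hold, global generation of $\calL_\bgrl$ holds by (ii), and the resulting morphism embeds $Y_S$ and each closed $G$-orbit; by $G$-equivariance of the open covering described in Theorem~\ref{teo:toroidali}, this is enough to yield a closed immersion of $Y$, proving (iii) and (iv) simultaneously.

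The main obstacle will be the surjectivity step in (ii) and the promotion in (iii)/(iv) of the pointwise separation on $Y_S$ and on closed orbits to a closed immersion of all of $Y$. The key technical input is that multiplication by the $G$-invariant boundary sections $s_D$ corresponds to translation by $\bgra_D$ in $\SPic_0(Y)$, so that the filtration underlying Theorem~\ref{teo:sezioniY} is compatible with the toric grading on $Y_S$ and with the $\tW$-action, which is what allows the toric polytope picture on $Y_S$ to control sections globally on $Y$.
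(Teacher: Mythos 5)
Your overall route is the right one, and it is in fact the one the paper itself relies on: the text gives no independent proof of this corollary, deferring to Bifet \S 4.2 and to Ruzzi, and those arguments proceed exactly as you do, by matching the summands $s^{\bgrl-\bmu}V_\mu^*$ of Theorem \ref{teo:sezioniY} against the weight polytope of the $\tW$-invariant piecewise linear function determined by $\bgrl$ on the fan of $Y_S$. Still, two points in (i)--(ii) need tightening. First, the $H$-fixed vector of $V_\mu^*$ does \emph{not} restrict to a section of pure $S_H$-weight $\mu$: its restriction to $Y_S$ is a sum of weight components, and what is true (and what you actually need) is that its extreme weight component is nonzero --- this is Lemma \ref{lem:pesoalto} together with the argument in the proof of Theorem \ref{teo:Rich} that a lowest weight vector does not vanish on the closure of $S_H$. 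Since the restriction map is $S$-equivariant, its image is a sum of weight lines, so a nonzero extreme component suffices; but your sentence as written is incorrect. Second, in (ii) you invoke (i) to get surjectivity, but (i) only asserts that the weight-$\mu$ line of $\grG(Y_S,\calL\ristretto_{Y_S})$ is nonzero, not that it lies in the image of restriction; the fix is again the extreme-weight argument combined with the $\tW$- and $S$-stability of the image.

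The genuine gap is in (iii)--(iv). The criterion you quote --- very ample iff globally generated and very ample on every closed $G$-orbit --- is false: on a complete toric variety the closed orbits are points, so this criterion would make every globally generated line bundle very ample, which already fails for the pullback of an ample bundle under a toric blow-down. This matters here because the strict convexity condition $\grl_\tau<\grl_{\tau'}$ on $\tau'\senza\tau$ is exactly what such a criterion cannot detect. You implicitly repair this by also requiring ampleness on $Y_S$, but then the resulting implication (global generation, plus separation on $Y_S$ and on the closed orbits, yields a closed immersion of all of $Y$) is precisely what has to be proved, and ``$G$-equivariance of the open covering'' does not do it by itself: the actual argument uses the local structure theorem, namely that $Y$ is covered by $G$-translates of an open set $\calU\isocan Y_S^+\times U$ with $U$ the unipotent radical of the opposite parabolic, to show that sections of $\calL_\bgrl^n$ separate points and tangent vectors on $\calU$, and then properness and equivariance to globalize. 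That step is the substantive content of the cited arguments of Bifet and Ruzzi, and your sketch leaves it unproved.
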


\begin{oss}
We have limited our discussion to strictly spherical line bundle and
to characteristic 0. Using Frobenius splitting methods it is easy  to  generalize
the previous results as in \cite{DS}. However the stated result are
enough for our purpose here. 
\end{oss}

\section{The quotient of a symmetric variety}\label{sec:GIT}
Let $Y$ be an embedding of $G/H$ and let $K$ be a subgroup such that
$\Hz\subset K \subset H$. Any line bundle on $Y$ has a $G$
linearization, so in particular it has a $K$ linearization.  Recall
that if $\calL$ is an ample line bundle on $Y$ a point $y$ on $Y$ is
said to be $\calL$ semistable (with respect to the action of $K$) if for some 
 $n>0$ there exists $f\in \Gamma (Y,\calL^n)^K$ such that $f(y)\neq
0$.  We denote by $Y^{ss}(\calL)$ the set of $\calL$ semistable
points, or in case $\calL$ is chosen just semistable points.
$Y^{ss}(\calL)$ is a possibly empty open subset of $Y$.  By
\cite{Newstead} Theorem 3.21 there exists a good quotient of the set
of $\calL$ semistable points which we shall denote by $K\lGIT_\calL
Y$.

In this section we are going to prove the following Theorem

\begin{teo}\label{teo:GIT}
  Let $Y$ be an equivariant projective embedding of $G/H$ and let $\calL$ be an
  ample and spherical line bundle on $Y$ then the inclusion
  $Y_S\subset Y$ induces an isomorphism of algebraic varieties between
  $\tW\backslash Y_S$ and $K\lGIT_\calL Y$.\end{teo}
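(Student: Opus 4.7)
My plan is to realize both sides as $\Proj$ of naturally isomorphic graded algebras of invariant sections, comparing them through restriction along $Y_S\hookrightarrow Y$. Since $G$ is semisimple and simply connected, $\calL$ admits a unique $G$-linearization and hence canonical $K$- and (via $N_{\Hz}(S)$) $\tW$-linearizations; the latter descends from $N_{\Hz}(S)$ to $\tW$ because $\calL$ is spherical, so $Z_{\Hz}(S)\subset \Hz$ acts trivially on fibers of $\calL$ over the open $S_H$-orbit and hence, by equivariance and continuity, on all of $\calL|_{Y_S}$. Ampleness of both $\calL$ and $\calL|_{Y_S}$ yields
\begin{align*}
K\lGIT_{\calL}Y &= \Proj\Bigl(\bigoplus_{n\geq 0}\Gamma(Y,\calL^n)^K\Bigr),\\
\tW\backslash Y_S &= \Proj\Bigl(\bigoplus_{n\geq 0}\Gamma(Y_S,\calL^n|_{Y_S})^{\tW}\Bigr).
\end{align*}
It therefore suffices to show that the restriction maps $\rho_n:\Gamma(Y,\calL^n)\to\Gamma(Y_S,\calL^n|_{Y_S})$ induce a graded ring isomorphism on invariants. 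The image of $\rho_n^K$ lies in the $\tW$-invariants since $K\supset \Hz\supset N_{\Hz}(S)$, and multiplicativity is automatic.

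In characteristic zero I would replace $\calL$ by a sufficiently divisible power so that $\calL=\calL_{\bgrl}$ with $\bgrl\in \SPic_0(Y)$, and invoke Theorem~\ref{teo:sezioniY} to decompose
$$
\Gamma(Y,\calL_{\bgrl}) \;=\; \bigoplus_{\mu\in\calA(\bgrl)} s^{\bgrl-\bmu}V_\mu^*.
$$
Since $K$ is reductive, taking $K$-invariants is exact and, by Corollary~\ref{cor:invV} together with Lemma~\ref{lem:PicGH}, picks out the one-dimensional summands indexed by $\mu\in\calA(\bgrl)\cap \Omega_K^+$. On the toric side, the $S_H$-weights of $\Gamma(Y_S,\calL|_{Y_S})$ coincide with $\calA(\bgrl)$ by Corollary~\ref{cor:sezioniY}~i), and $\tW$-invariants amount to summing the weight spaces over each $\tW$-orbit. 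I would then check that the spherical (i.e.\ $\Hz$-invariant) vector in each $V_\mu^*$, viewed inside $\Gamma(Y,\calL_{\bgrl})$, restricts on $Y_S$ to (a non-zero multiple of) the $\tW$-orbit sum of weight-$\mu$ sections — this relies on $V_\mu^*$ having $S$-weights precisely in the convex hull of the Weyl orbit of $\mu$ with a single line of weight $w\mu$ for each Weyl translate of a regular weight. Dimension counting then gives bijectivity of $\rho_n^K$.

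For positive characteristic, I would use the $\R$-forms constructed in Sections~\ref{ssec:Z} and~\ref{ssec:toroidali}: $\mbY$, $(\mbY)_{\mbS}$, the line bundles $\calL^n$, and the spherical vectors are all defined and flat over $\R$ (Corollary~\ref{cor:invV} plus Remark~\ref{oss:123}); the good filtration of $\Gamma(Y,\calL^n)$ available in all characteristics (an extension of the argument of \cite{DS} to toroidal embeddings, cf.\ the remark at the end of Section~\ref{ssec:toroidali}) ensures that $\Gamma(\mbY,\calL^n)$ is $\R$-flat. Since $K$ is reductive its invariants commute with flat base change, so the characteristic-zero isomorphism $\rho_n^K$ specializes to an isomorphism over $\mk$ via Lemma~\ref{lem:invsuZ}; passing to $\Proj$ finishes the proof.

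The main obstacle is the positive characteristic case, and more precisely establishing compatibility of the decomposition in Theorem~\ref{teo:sezioniY} with reduction modulo an odd prime. This requires either Frobenius splitting arguments in the spirit of \cite{DS}, or directly exhibiting enough $\R$-sections (via Corollary~\ref{cor:invV}) to match the Hilbert function fibrewise and conclude by flatness. Once this point is settled, the identification $K\lGIT_{\calL}Y\cong\tW\backslash Y_S$ becomes purely formal.
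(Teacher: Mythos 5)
Your overall strategy --- realizing both quotients as $\Proj$ of graded rings of invariant sections and comparing them via restriction along $Y_S\subset Y$ --- is exactly the paper's, and your characteristic-zero outline for a \emph{smooth toroidal} $Y$ is essentially the paper's Lemma \ref{lem:GIT}. But the theorem is stated for an \emph{arbitrary} equivariant projective embedding $Y$, and your argument silently assumes $Y$ smooth toroidal the moment you invoke Theorem \ref{teo:sezioniY}. The reduction is not automatic and is a genuine missing step: the paper passes to an equivariant resolution $\tilde Y$ of the closure of $G/H$ in $Y\times X$, pulls $\calL$ back to $\calM=\phi^*\calL$, uses normality of $Y$ to get $\grG(Y,\calL^n)=\grG(\tilde Y,\calM^n)$ (note $\calM$ is only globally generated, not ample, which is why Lemma \ref{lem:GIT} is proved for globally generated spherical bundles), and uses surjectivity of $\tilde Y_S\to Y_S$ to transport the isomorphism back. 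A second, smaller inaccuracy in your characteristic-zero step: the spherical vector of $V_\mu^*$ does \emph{not} restrict to a multiple of the single orbit sum $\psi_\mu$ on $S_H$; its restriction involves lower terms $\psi_\nu$ with $\nu<_\grs\mu$. What you actually need is that the extreme-weight component is nonzero (Lemma \ref{lem:pesoalto}), which yields a triangular, hence invertible, change of basis; this nonvanishing requires an argument (density of $B\cdot\Hz$ in $G$) and is not just a statement about weight multiplicities of $V_\mu^*$.

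The positive-characteristic step is where your argument actually breaks: ``since $K$ is reductive its invariants commute with flat base change'' is false. Reductive group schemes are not linearly reductive in characteristic $p$, and $(-)^K$ does not commute with reduction mod $p$; Lemma \ref{lem:invsuZ} gives only that the reduction of the $\R$-lattice of invariants \emph{injects} into the invariants of the reduction, i.e.\ $\dim\grG(Y_\mk,\calL_\mk^n)^K$ can only be $\geq$ its characteristic-zero value. The paper closes the resulting gap by a squeeze that you do not set up: (a) $\dim\grG(Y_S,\calL^n\ristretto_{Y_S})^{\tW}$ is characteristic-independent because $\tW$ permutes a weight basis; (b) restriction of $K$-invariants from $Y$ to $Y_S$ is \emph{injective}, which is deduced from Richardson's theorem \ref{teo:Rich} via the commutative square through $\mk[G/K]$ and $\mk[S_K]$ --- an input your proposal never establishes; (c) flatness gives the reverse inequality. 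Your alternative suggestion of ``matching the Hilbert function fibrewise'' is morally this argument, but without (a) and (b) it does not close, and as written the main assertion carrying the reduction to characteristic $p$ is incorrect.
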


We will prove this Theorem by computing the invariant sections. We
will analyze first the case of the wonderful compactification and the
case of the quotient of the open affine part $G/H$.

\subsection{Invariants and semiinvariants of the Cox ring of the
  wonderful compactification} In this section we compute the $H$
invariants of the ring $R(X)$. We use the notations introduced in
section \ref{ssec:X}.

\begin{lem}\label{lem:grGammaX}
  Let $\grl \in \Pi$. Then $(\gr_F(\grG(X,\calL_\grl)))^{H}
  =\gr_F(\grG(X,\calL_\grl)^{H})$. In particular the dimension of
  the space of invariants $\grG(X,\calL_\grl)^{H}$ equals the
  cardinality of the set $K_\lambda:=\{\mu \in \Omega^+ \st \mu
  \leq_\grs \grl\}$ if $\grl \in \Omega_H$ and is zero otherwise.
\end{lem}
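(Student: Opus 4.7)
The plan is to bound $\dim\grG(X,\calL_\grl)^H$ from above via the filtration (\ref{lagrada}) and to match it from below by descending from characteristic zero through the $\R$-form.

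The filtration $F_{\grl,\nu}$ is $G$-stable, because each $s^{\grl-\mu}$ is $G$-invariant, and hence it is $H$-stable. Taking $H$-invariants induces an injection
$$\gr_F(\grG(X,\calL_\grl)^H)\hookrightarrow(\gr_F\grG(X,\calL_\grl))^H = \bigoplus_{\mu\in\Pi^+,\ \mu\leq_\grs\grl}s^{\grl-\mu}(V_\mu^*)^H.$$
Since $\Hz\subset H$ still has an open orbit on $G/B$, the space $(V_\mu^*)^H=\grG(G/B,\calL_\mu)^H$ is at most one dimensional and is nonzero precisely when $\mu\in\Omega_H^+$. Using Lemma \ref{lem:sferici}, namely $\mN[\tDe]\subset\mZ[\tPhi]=\Omega_\Had\subset\Omega_H$ and $\Omega_H\cap\grL^+=\Omega_H^+$: if $\grl\in\Omega_H$, every $\mu\in K_\grl$ satisfies $\mu=\grl-(\grl-\mu)\in\Omega_H$, hence $\mu\in\Omega_H^+$, and conversely every $\mu\in\Omega_H^+$ with $\mu\leq_\grs\grl$ lies in $K_\grl$; if $\grl\notin\Omega_H$, no $\mu\in K_\grl$ can lie in $\Omega_H$, for otherwise $\grl=\mu+(\grl-\mu)\in\Omega_H$. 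This yields the upper bound $\dim\grG(X,\calL_\grl)^H\leq|K_\grl|$ if $\grl\in\Omega_H$ and $0$ otherwise.

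For the matching lower bound I work over $\R$. The description (\ref{lagrada}) together with the $\R$-flatness of each $V_\mu^*$ (see \cite{Jantzen}) shows that $\grG(\mbX,\calL_\grl)$ is a flat representation of $\mbH$ on a free $\R$-module. In characteristic zero $H$ is reductive, so the $H$-invariants functor is exact on the filtration and the upper bound is attained, giving $\dim_\mC\grG(\mbX_\mC,\calL_\grl)^{\mbH(\mC)}=|K_\grl|$ or $0$. The proof of Lemma \ref{lem:invsuZ} shows that the full $\mbH$-invariant subspace over $\mC$ is defined over $\R$ as a direct summand of $\grG(\mbX,\calL_\grl)$, whose specialization at $\mk$ injects into $\grG(X,\calL_\grl)^H$. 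This furnishes the required lower bound, forcing equality both in the dimension count and in the inclusion $\gr_F(\grG(X,\calL_\grl)^H)\hookrightarrow(\gr_F\grG(X,\calL_\grl))^H$.

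The main obstacle is precisely this lower bound in positive characteristic: since $H$ need not be linearly reductive, the filtration cannot be split $H$-equivariantly by averaging. The $\R$-descent, applied to the entire invariant subspace via the direct-summand form of Lemma \ref{lem:invsuZ}, is what bypasses the difficulty and allows one to transport all invariants from $\mC$ to $\mk$.
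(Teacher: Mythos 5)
Your proof is correct and follows essentially the same route as the paper's: the upper bound comes from the inclusion $\gr_F(\grG(X,\calL_\grl)^{H})\subset(\gr_F\grG(X,\calL_\grl))^{H}$ combined with Vust's theorem and Lemma \ref{lem:sferici}, and the lower bound comes from linear reductivity in characteristic zero together with the semicontinuity of invariants over $\R$ supplied by Lemma \ref{lem:invsuZ}. Your write-up merely makes explicit the combinatorial identification of $K_\lambda$ and the direct-summand form of the $\R$-descent, both of which the paper leaves implicit.
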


\begin{proof}
  In characteristic zero the equality
  $$(\gr_F(\grG(X,\calL_\grl)))^{H} =\gr_F(\grG(X,\calL_\grl)^{H})$$ 
  is an immediate consequence of the linear reductivity of $H$.

 Also (in arbitrary characteristic) by equation
  \eqref{lagrada} we have that $$(\gr_F(\grG(X,\calL_\grl)))^{H} =
  \bigoplus_{\mu\in \Pi^+ \mand \mu\leq_\grs \grl} s^{\grl-\mu}
  (V^*_\mu)^H.$$ By Vust criterion (Theorem \ref{teo:Vust}) $(V_\mu^*)^H$ is
  one dimensional if $\mu \in \Omega_H^+$ and it is zero otherwise. So,
  since by Lemma \ref{lem:sferici} $\mZ[\tPhi]\subset \Omega_H$ we
  have that $(\gr_F(\grG(X,\calL_\grl)))^{H}$ has dimension equal to
  $|K_\lambda|$ if $\grl \in \Omega_H$ and it is zero otherwise.
  
  In general $(\gr_F(\grG(X,\calL_\grl)))^{H} \supset
  \gr_F(\grG(X,\calL_\grl)^{H})$ so
  $$\dim\grG(X,\calL_\grl)^{H}\leq\dim (\gr_F(\grG(X,\calL_\grl)))^{H}.$$

  On the other hand by Theorem \ref{teo:X} and Lemma \ref{lem:PicY}
  the variety $X$ and the spaces $\grG(X,\calL_\grl)$ are all defined
  over $\R$. Lemma \ref{lem:invsuZ} then clearly implies that in
  positive characteristic $\dim\grG(X,\calL_\grl)^{H}$ can only
  increase.  This together with the previous inequality implies our
  claim.
\end{proof}

We compute now the ring $R(X)^{H}$. By Lemma \ref{lem:grGammaX}, for
each $\tal \in \tDe$ we can choose $p_\tal \in
\grG(X,\calL_{\tom_\tal})$ an $\Hz$ invariant section which does not
vanish on $\XD$. So, if $\grl=\sum a_\tal \tom_\tal\in \Omega^+$, we
can define
\begin{equation}\label{linvario}p^{\grl}= \prod_{\tal \in
    \tDe}p_\tal^{a_\tal}.\end{equation}

\begin{prp} \label{prp:invX} The set $\{s^{\mu}p^{\grl} \st \mu \in
  \Pi \mand \mu \geq_\grs 0 \mand \grl \in \Omega_H^+\}$ is a $\mk$
  basis of $R(X)^{H}$. In particular the ring $R(X)^{\Hz}$ is a
  polynomial ring in the variables $s_\tal, p_\tal$ with $\tal \in
  \tDe$.
\end{prp}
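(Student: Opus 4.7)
My plan is three-pronged: verify $H$ invariance of each candidate $s^\mu p^\grl$, count them weight by weight and match against $\dim \grG(X,\calL_\nu)^H$, and use the filtration $F_{\nu,*}$ to extract their principal symbols and deduce linear independence. Invariance is the easy part: each $s_\tal$ is $G$ invariant, so $s^\mu$ is too, and the chosen $p_\tal$ is $\Hz$ invariant by construction, hence so is $p^\grl$. To upgrade $\Hz$ invariance to $H$ invariance for $\grl \in \Omega_H^+$ (which sits inside $\Omega^+$ since any $H$ invariant is a fortiori $\Hz$ invariant), I would exploit \eqref{lagrada} with $\nu=0$, which identifies the restriction $p_\tal\ristretto_{\XD}$ with a generator of the unique $\Had$ stable line in $V_{\tom_\tal}^*$. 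The character by which $H$ acts on this line is $-\chi_H(\tom_\tal)$, as introduced just before Proposition \ref{lem:splitta}, so $H$ acts on $p^\grl$ through $-\chi_H(\grl)$, which is trivial on $\Omega_H = \ker \chi_H$.

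For the dimension count, fix $\nu \in \Pi$. A pair $(\mu,\grl)$ of our form with $\mu+\grl=\nu$ forces $\nu \in \mN[\tDe]+\Omega_H^+\subset \Omega_H$, and when $\nu \in \Omega_H$ the pairs biject with $\{\grl\in\Omega_H^+ : \grl\leq_\grs \nu\}$. The condition $\grl \leq_\grs \nu$ together with $\nu\in\Omega_H$ automatically keeps $\grl$ inside $\Omega_H$ (because $\mN[\tDe]\subset \Omega_\Had\subset \Omega_H$), and $\Omega_H^+=\Omega^+\cap\Omega_H$ by Lemma \ref{lem:sferici} i), so this cardinality coincides with $|K_\nu|$, which by Lemma \ref{lem:grGammaX} is $\dim\grG(X,\calL_\nu)^H$. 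For linear independence, each $s^\mu p^\grl$ lies in $F_{\nu,\mu}$ by the very definition of that subspace, and under the isomorphism
\[
F_{\nu,\mu}\Big/\sum_{\nu'>_\grs \mu}F_{\nu,\nu'}\isocan V_\grl^*
\]
from \eqref{lagrada} (division by $s^\mu$ followed by restriction to $\XD$), the principal symbol of $s^\mu p^\grl$ is the nonzero vector $p^\grl\ristretto_{\XD}$, nonzero because each $p_\tal$ was chosen not to vanish on $\XD$. Distinct values of $\mu$ give principal symbols in distinct graded pieces of $\gr_F\grG(X,\calL_\nu)$, and each $\mu$ uniquely determines $\grl=\nu-\mu$, so the principal symbols are linearly independent and hence so are the $s^\mu p^\grl$ themselves. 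Combined with the dimension count, this proves the basis statement.

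For the polynomial ring claim on $R(X)^{\Hz}$, specialize to $H=\Hz$: then $\Omega_H=\Omega$, and in the simply connected case $\Omega^+$ is the free monoid on $\{\tom_\tal\}_{\tal\in\tDe}$ by the remark following Lemma \ref{lem:sferici}. The basis thus takes the form $\{\prod_\tal s_\tal^{a_\tal}p_\tal^{b_\tal} : a,b \in \mN^{\tDe}\}$, identifying $R(X)^{\Hz}\isocan \mk[s_\tal, p_\tal : \tal \in \tDe]$. The main technical hurdle in the whole argument is the linear independence step: one must keep the filtration $F$ and the description of its associated graded under tight enough control to place $s^\mu p^\grl$ exactly in the stratum $F_{\nu,\mu}$ and identify its principal symbol as $p^\grl\ristretto_{\XD}$. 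Once that bookkeeping is in place everything reduces to the dimension formula of Lemma \ref{lem:grGammaX} and the non-vanishing of the $p_\tal$'s on the closed orbit.
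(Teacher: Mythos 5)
Your overall strategy coincides with the paper's: both arguments rest on the filtration $F_{*,*}$, the identification of principal symbols in $\gr_F\grG(X,\calL_\nu)$ via \eqref{lagrada}, linear independence read off from the associated graded, and the dimension count $|K_\nu|=\dim\grG(X,\calL_\nu)^H$ from Lemma \ref{lem:grGammaX}. The counting, the independence argument, and the specialization to $H=\Hz$ for the polynomial-ring statement are all sound and match the paper.

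The one step that does not hold up as written is the upgrade from $\Hz$-invariance to $H$-invariance of $p^\grl$. You argue that $p_\tal\ristretto_{\XD}$ spans the $\Had$-stable line in $V^*_{\tom_\tal}$, on which $H$ acts by $-\chi_H(\tom_\tal)$, and conclude that $H$ acts on $p^\grl$ itself by $-\chi_H(\grl)$. But this only controls the action of $H$ on the \emph{principal symbol} of $p^\grl$ in the top graded piece; a priori $h\cdot p^\grl-p^\grl$ could be a nonzero element of strictly smaller filtration degree, so the section itself need not be an eigenvector just because its symbol is. The paper sidesteps this entirely: by Lemma \ref{lem:grGammaX}, for $\grl\in\Omega_H$ one has $\dim\grG(X,\calL_\grl)^{H}=|K_\grl|=\dim\grG(X,\calL_\grl)^{\Hz}$, and since the first space sits inside the second they coincide, so \emph{every} $\Hz$-invariant section is automatically $H$-invariant. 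Note that this repair is already implicit in the inputs you use: your count shows the $s^\mu p^\grl$ form a basis of $\grG(X,\calL_\nu)^{\Hz}$, and the equality of dimensions then forces $\grG(X,\calL_\nu)^{H}=\grG(X,\calL_\nu)^{\Hz}$, giving the $H$-invariance for free. (Alternatively one could complete your character argument by observing that $H/\Hz$ is a finite diagonalizable group scheme acting on $\grG(X,\calL_\nu)^{\Hz}$ preserving the filtration, so triviality of all characters on the associated graded implies triviality of the action; but the dimension argument is cleaner.)
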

\begin{proof}
  Notice first that by Lemma \ref{lem:grGammaX}, if $\grl \in
  \Omega_H$, $\grG(X,\calL_\grl)^{H} = \grG(X,\calL_\grl)^{\Hz}$ so
  it is enough to prove the claim in the case of $\Hz$.

  The image of $p_\tal$ in the graded ring $\gr_F(R(X))$ defines an
  $\Hz$ invariant element $\bar p_\tal$ of $V^*_{\tom_\tal}$.  So by the
  description of the $\Hz$ invariants of $\gr_F(R(X))$ the image of
  the elements $s^\mu p^\grl$ in the graded ring $\gr_F(R(X))$ is a
  $\mk$ basis of the space of $\Hz$ invariants. This implies that the
  elements $s^\mu p^\grl$ are linearly independent.

  By construction, the elements $s^\mu p^\grl$ are $\Hz$ invariants.
  So, again by Lemma \ref{lem:grGammaX} they are a $\mk$ basis of
  $R(X)^{\Hz}$.
\end{proof}

The computation of semi invariants is similar. If $V$ is a
representation of $\Had$ we denote by $V_{si}^{\Had}$ the
subspace spanned by the set of semi invariant vectors i.e. vectors
spanning lines fixed by $\Had$.

By Theorem \ref{teo:DS}, we know that there are semi invariants which
are not $\Hz$ invariants only if there exists an exceptional simple root.
Set $\grD_e =\{\gra \in \grD_1 \st \gra$ is exceptional$\}$ and
$\tDe_{ne}=\{\tal \in \tDe \st \alpha$ is not exceptional$\}$. By
Theorem \ref{teo:DS} the set $\{\om_\gra\st \gra \in
\grD_e\}\cup\{\tom_\tal \st \tal \in \tDe_{ne}\}$ is a basis of $\Pi$.
Let $\bar q _\gra \in V_{\om_\gra}^*$ be a non zero $\Had$ semi
invariant. $\bar q _\gra$ is unique up to multiplication by a non zero
scalar. So the ring $\big(R_\Pi(\XD)\big)_{si}^{ \Had}$ of semi
invariants is a polynomial ring in the generators $\bar q_\gra$ with
$\gra \in \grD_e$ and $\bar p_\tal$ (the restriction of $p_\tal$ to
$\XD$) with $\tal \in \tDe_{ne}$.  Using Corollary \ref{cor:invV} and
arguing as in Lemma \ref{lem:grGammaX} we deduce that there exists
$q_\gra\in \grG(X,\calL_{\omega_\gra})_{si}^{ \Had}$ such that its
restriction to $\XD$ is equal to $\bar q_\gra$. If $\grl =
\sum_{\gra\in\grD_e} c_\gra \om_\gra + \sum_{\tal\in \tDe_{ne}} c_\tal
\tom_\tal \in \Pi^+$, we define $q^\grl=\prod_{\gra\in \grD_e}
q_\gra^{c_\gra} \,\cdot \prod_{\tal \in \tDe_{ne}}p_\tal^{c_\tal}$.
The arguments given in the case of invariants can now be easily
adapted implying
\begin{prp}\label{prp:semiinvX}
  Let $\grl \in \Pi$. Then $(\gr_F(\grG(X,\calL_\grl)))_{si}^{ \Had}
  =\gr_F(\grG(X,\calL_\grl)_{si}^{ \Had})$.  Moreover the set
  $\{s^{\mu}q^{\grl} \st \mu \in \Pi \mand \mu \geq_\grs 0 \mand \grl \in
  \Pi^+\}$ is a $\mk$ basis of $R(X)_{si}^{ \Had}$. In particular
  the ring $R(X)_{si}^{ \Had}$ is a polynomial ring in the variables
  $s_\tal$ with $\tal \in \tDe$, $p_\tal$ with $\tal \in \tDe_{ne}$
  and $q_\gra$ with $\gra \in \grD_e$.
\end{prp}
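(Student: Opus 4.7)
The plan is to mirror the proof of Proposition \ref{prp:invX} step by step, replacing the appeal to Theorem \ref{teo:Vust} by the appeal to Theorem \ref{teo:DS}(i). The heart of the matter is the semi-invariant analogue of Lemma \ref{lem:grGammaX}: the identity
$$(\gr_F\grG(X,\calL_\grl))_{si}^{\Had} = \gr_F(\grG(X,\calL_\grl)_{si}^{\Had})$$
and the dimension formula $\dim\grG(X,\calL_\grl)_{si}^{\Had}=|K_\grl^{\Had}|$, where $K_\grl^{\Had}\defi\{\mu\in\Pi^+\st\mu\leq_\grs\grl\}$.

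For this I first compute the left-hand side. By \eqref{lagrada} combined with Theorem \ref{teo:DS}(i), for each $\mu\in\Pi^+$ the summand $V^*_\mu$ contains a unique $\Had$-stable line, so
$$(\gr_F\grG(X,\calL_\grl))_{si}^{\Had}=\bigoplus_{\mu\in\Pi^+,\ \mu\leq_\grs\grl} s^{\grl-\mu}(V^*_\mu)_{si}^{\Had}$$
has dimension exactly $|K_\grl^{\Had}|$. The tautological inclusion $\gr_F(\grG(X,\calL_\grl)_{si}^{\Had})\subset (\gr_F\grG(X,\calL_\grl))_{si}^{\Had}$ gives $\dim\grG(X,\calL_\grl)_{si}^{\Had}\leq|K_\grl^{\Had}|$. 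For the opposite inequality I proceed as in Lemma \ref{lem:grGammaX}: in characteristic zero $\Had$ is reduced and is an extension of the finite group $\Had/\Hz$ by the connected reductive $\Hz$, hence linearly reductive, and equality follows directly; in arbitrary characteristic, the $\R$-structures of Theorem \ref{teo:X}(v--vi) and Lemma \ref{lem:PicY}(iv) allow me to apply Lemma \ref{lem:invsuZ} character-by-character to the twists $\grG(X,\calL_\grl)\otimes\chi_\Had(\mu)^{-1}$, showing that each $\chi_\Had(\mu)$-isotypic component has at least its characteristic-zero dimension. Summing over the finitely many characters involved then yields $\dim\grG(X,\calL_\grl)_{si}^{\Had}\geq|K_\grl^{\Had}|$, so both the dimension equality and the graded identification hold.

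With this in hand, the construction of $q_\gra$ and the basis statement proceed exactly as in Proposition \ref{prp:invX}. Corollary \ref{cor:invV} produces $\bar q_\gra\in V^*_{\omega_\gra}$, an $\Had$-semi-invariant defined over $\R$ whose reduction is nonzero in every characteristic different from $2$; since $\XD$ is the unique closed $G$-orbit, the surjectivity of $\grG(X,\calL_{\omega_\gra})\to \grG(\XD,\calL_{\omega_\gra})$ recalled just before \eqref{lagrada}, combined once more with Lemma \ref{lem:invsuZ}, lets me lift $\bar q_\gra$ to $q_\gra\in\grG(X,\calL_{\omega_\gra})_{si}^{\Had}$ (and over $\R$). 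The products $s^\mu q^\grl$ are then manifestly $\Had$-semi-invariants whose associated graded images sit in pairwise distinct summands $s^{\grl-\mu}V^*_\mu$ of $\gr_F R(X)$ and exhaust $(\gr_F R(X))_{si}^{\Had}$; they are therefore linearly independent and, by the dimension count, a basis of $R(X)_{si}^{\Had}$. The polynomial ring statement is immediate since this basis consists precisely of the monomials in the generators $s_\tal$ ($\tal\in\tDe$), $p_\tal$ ($\tal\in\tDe_{ne}$), $q_\gra$ ($\gra\in\grD_e$).

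The main obstacle is the positive-characteristic step of the dimension lower bound: because $\Had$ need not be reduced, the word \emph{semi-invariant} must be interpreted through the comodule map $V\to V\otimes\mk[\Had]$ rather than set-theoretically, and one has to decompose the semi-invariant space into its character-isotypic components before applying Lemma \ref{lem:invsuZ} to the appropriate twists separately. Once this bookkeeping is in place, the analogy with Lemma \ref{lem:grGammaX} and Proposition \ref{prp:invX} is complete and the statement follows.
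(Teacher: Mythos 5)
Your proposal is correct and follows essentially the same route as the paper, which itself only says that the arguments of Lemma \ref{lem:grGammaX} and Proposition \ref{prp:invX} "can be easily adapted": you carry out that adaptation, replacing Vust's theorem by the uniqueness of the $\Had$-stable line from Theorem \ref{teo:DS}(i) and handling positive characteristic via Lemma \ref{lem:invsuZ} applied to character twists, exactly as in the proof of Corollary \ref{cor:invV}. The one point you rightly flag --- decomposing the semi-invariants into character-isotypic pieces before invoking Lemma \ref{lem:invsuZ} --- is precisely the bookkeeping the paper leaves implicit.
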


\subsection{Filtration of the coordinate ring of $G/H$ and Richardson Theorem}
\label{ssec:inv} We want now to use the wonderful variety
$X$ to define a filtration of the coordinate ring of $G/H$. In the
case in which $H$ is the diagonal subgroup in $G=H\times H$, these
ideas already appear in \cite{Strick}.  This will be used to describe
the $H$ invariants of this ring. This description of the invariants
has already been given by Richardson \cite{Rich} but a proof in our
setting seems natural.

First we make explicit the relation between the coordinate ring of
$G/H$ and the ring $R_{\Omega_H}(X)$.

For each $\tal \in \tDe$ we  choose a trivialization
$\grf_{\tom_\tal}:\pi^*(\calL_{\tom_\tal})\lra \calO_{G/\Hz}$. Given
$\grl=\sum_{\tal \in \tDe} c_\tal \tom_\tal \in \Omega$ we obtain the
trivialization of $\pi^*(\calL_\grl)$ given by $\otimes_{\tal \in
  \tDe} \grf_{\tom_\tal}^{\otimes c_\tal}$. With these choices the pull
back of sections defines a ring homomorphism:
$$\pi_H^*:R_{\Omega_H}(X)\lra \mk[G/H].$$
Notice also that since $s_\tal$ is $G$ invariant the functions
$\pi_H^*(s_\tal)$ are constant and we can normalize them to be equal to
$1$.

For each $\lambda\in\Omega_H$ we consider the $G$ submodule $$\mF_\grl
:= \pi_H^*(\grG(X,\calL_\grl)) $$ of $\mk[G/H]$. Notice that since
the $s_{\alpha}$ are all equal to $1$, we clearly have that if $\mu
<_\grs \grl$, $\mF_\mu\subset \mF_\grl $.  Also, since the image of
$\pi_H$ is dense in $X$ we have that $\pi_H^*$ restricted to
$\grG(X,\calL_\grl)$ is an isomorphism onto $\mF_\grl$. Furthermore if
we set $ \mF'_\grl=\sum_{\mu <_\grs \grl} F_\mu, $ we have
$\mF_\grl/\mF'_\grl \isocan V_\grl^*$.

\begin{prp}
  \label{prp:XGH} The map $\pi_H^*$ induces an isomorphism of rings
$$\grf:\frac{R_{\Omega_H}(X)}{(s_\tal-1 \st \tal \in \tDe)} \lra \mk[G/H].$$
\end{prp}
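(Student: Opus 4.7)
Since each $s_\tal$ is $G$-invariant, its image $\pi_H^*(s_\tal)\in\mk[G/H]^G=\mk$ is constant; by our normalization it equals $1$, so $\pi_H^*$ kills every generator of $I=(s_\tal-1\st\tal\in\tDe)$ and factors through a ring homomorphism $\grf$. The plan is to prove bijectivity of $\grf$ using an auxiliary grading present on both sides. The natural $\Omega_H$-grading on $R_{\Omega_H}(X)$ coarsens to $\Omega_H/\Omega_\Had$; since $\tDe\subset\Omega_\Had$, the ideal $I$ is homogeneous of degree $\bar 0$ for this coarser grading, so $R_{\Omega_H}(X)/I$ inherits an $\Omega_H/\Omega_\Had$-grading. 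On $\mk[G/H]$, the $\Had/H$-isotypic decomposition provides the same grading, using Lemma~\ref{lem:HeS} to identify $\widehat{\Had/H}\cong\Omega_H/\Omega_\Had$. The map $\grf$ respects these gradings, so it suffices to prove bijectivity of each graded component $\grf_{\bar\grl}$.

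\textbf{Injectivity.} The key elementary observation is that $1-s^\nu\in I$ for every $\nu=\sum n_\tal\tal\in\mZ_{\geq 0}[\tDe]$; this follows by a telescoping expansion from $1-s_\tal^n=(1-s_\tal)(1+s_\tal+\cdots+s_\tal^{n-1})$. Given a representative $\sum_\grl f_\grl$ of a class in $(R_{\Omega_H}(X)/I)_{\bar\grl}$ with $f_\grl\in\grG(X,\calL_\grl)$ and $\grf$-image zero, I choose $\grl_\infty\in\bar\grl$ with $\grl_\infty\geq_\grs\grl$ for every $\grl$ occurring, and replace the representative modulo $I$ by the single section $g:=\sum_\grl s^{\grl_\infty-\grl}f_\grl\in\grG(X,\calL_{\grl_\infty})$; this is legitimate because $s^{\grl_\infty-\grl}\equiv 1\pmod I$. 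Then $\pi_H^*(g)=\sum_\grl\pi_H^*(f_\grl)=0$, and since $\pi_H(G/H)$ is dense in $X$, restriction of sections of the invertible sheaf $\calL_{\grl_\infty}$ to this subset is injective, so $g=0$ and the original class vanishes.

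\textbf{Surjectivity and main obstacle.} For surjectivity I would invoke Proposition~\ref{lem:splitta} to write $\mk[G/H]=\grG(G/\Had,(q_H)_*\calO_{G/H})=\bigoplus_{\xi\in\Omega_H/\Omega_\Had}\grG(G/\Had,\calL_\xi)$, each summand being the $\xi$-isotypic piece for $\Had/H$. Fixing a lift $\grl_0\in\Omega_H$ of $\xi$, the line bundle $\calL_\xi$ on $G/\Had$ is the restriction of $\calL_{\grl_0}$ from $X$, and since $G/\Had=X\senza\divi(\prod_\tal s_\tal)$ its sections form the ascending union $\bigcup_{\nu\in\mZ_{\geq 0}[\tDe]}s^{-\nu}\grG(X,\calL_{\grl_0+\nu})$. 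Under $\pi_H^*$ each $s^{-\nu}$ factor disappears, so the $\bar\grl$-isotypic piece of $\mk[G/H]$ is precisely $\bigcup_\nu\mF_{\grl_0+\nu}$, which is the image of $\grf_{\bar\grl}$. The most delicate step, and the main obstacle I anticipate, is matching the abstract line bundle $\calL_\xi$ of Proposition~\ref{lem:splitta} with the restriction of $\calL_{\grl_0}$ under the specific trivializations $\grf_{\tom_\tal}$ defining $\pi_H^*$, so that the $\xi$-isotypic piece on the function side corresponds to the $\bar\grl$-graded piece on the algebraic side; this amounts to a character bookkeeping for $H$ and $\Had$ on the fibres, resolved by Lemma~\ref{lem:PicGH} and the description of $\chi_H$, after which the argument goes through uniformly in all characteristics different from $2$.
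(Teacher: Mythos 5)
Your proposal is correct and follows essentially the same route as the paper: the same $\Omega_H/\Omega_\Had$-grading of both sides (via Proposition~\ref{lem:splitta} and Lemma~\ref{lem:HeS}), the same telescoping trick $1-s^\nu\in I$ to collapse a degree-$\bar\grl$ representative into a single section of some $\calL_{\grl_\infty}$, and density of $\pi_H(G/H)$ in $X$ for injectivity. The only difference is cosmetic: you spell out the surjectivity step (identifying $\calL_\xi$ with the restriction of $\calL_{\grl_0}$ and writing sections over $G/\Had$ as $\bigcup_\nu s^{-\nu}\grG(X,\calL_{\grl_0+\nu})$), which the paper compresses into ``follows immediately from Proposition~\ref{lem:splitta}''.
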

\begin{proof} $\grf$ is clearly well defined and its surjectivity
  follows immediately from Proposition \ref{lem:splitta}.
    
  Let us now show that $\grf$ is injective. As above set $\Xi_H =
  \Omega_H/\Omega_\Had$ and for all cosets $\xi \in \Xi_H$ define $R_\xi =
  \bigoplus_{\grl \in \xi} \grG(X,\calL_\grl)$ so that
  $R_{\Omega_H}(X)=\bigoplus_{\xi\in \Xi_H}R_\xi$ is a $\Xi_H$ grading
  of the ring $R_{\Omega_H}(X)$.
  
  On the other hand by Proposition \ref {lem:splitta}, the coordinate ring
  $\mk[G/H]$ decomposes as the direct sum $\oplus_{\xi\in \Xi_H}
  \grG(G/\Had,\calL_\xi)$ and the restriction of $\pi^*_H$ decomposes
  as the direct sum $\oplus_{\xi\in \Xi_H} \mj^*_\xi$ where
  $\mj^*_{\xi}:R_\xi\to \grG(G/\Had,\calL_\xi)$ is induced by the
  inclusion $\mj_X$ of $G/\Had$ in $X$.
  
  Also, since the elements $\{s_\tal-1\st \tal\in \tDe\}$ lie in $R_0$
  the ideal $I$ that they generate decomposes as the direct sum
  $I=\oplus_{\xi\in \Xi_H} I_\xi$ with $I_\xi=I\cap R_\xi$, for each
  $\xi\in \Xi_H$.  Thus $\mj^*_\xi$ induces a map
  $$\grf_\xi:R_\xi/I_\xi\to \grG(G/\Had,\calL_\xi)$$ and it is enough
  to see that $\grf_\xi$ is injective for each $\xi\in \Xi_q$.

  Fix $\xi\in \Xi_H$.  Let $g=\sum_{\grl \in A} g_\grl \in R_\xi$ with
  $g_\grl \in \grG(X,\calL_\grl)$ and $A$  a finite subset of the
  coset $\xi$.  Assume $\pi_H^*(g)=0$.  By assumption there exists
  $\mu \in \xi$ such that $\mu \geq_\grs \grl$ for all $\grl\in A$.
  Set $g' = \sum _{\grl\in A} s^{\mu-\grl}g_\grl$ and notice that
  $g'\coinc g \mod I_\xi$ and that $g'\in \grG(X,\calL_\mu)$. We have 
  $\pi_H^*(g')=\pi_H^*(g)=0$ and since $\pi_H^*$ restricted to
  $\grG(X,\calL_\mu)$ is injective, $g'=0$ and $g \in I_\xi$ as
  desired.
\end{proof}

\begin{cor} The $G$ submodules $\mF_\grl$, $\lambda\in\Omega_H$, induce 
  a good (increasing) filtration of the coordinate ring $\mk[G/H]$.
\end{cor}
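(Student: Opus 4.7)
The ingredients assembled just before the statement give the inclusions $\mF_\mu\subset\mF_\grl$ for $\mu\leq_\grs\grl$ and the isomorphism $\mF_\grl/\mF'_\grl\isocan V_\grl^*$ for $\grl$ dominant; for $\grl\in\Omega_H$ non-dominant, equation \eqref{lagrada} forces $\mF_\grl=\mF'_\grl$, so $\mF_\grl$ is already the sum of $\mF_\mu$ with $\mu<_\grs\grl$, and iterating this relation reduces everything to dominant indices. What remains to do is to verify exhaustiveness of $\{\mF_\grl\}_{\grl\in\Omega_H^+}$ in $\mk[G/H]$ and then to linearize this poset-indexed family into an ascending chain whose graded pieces are Weyl modules.

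For exhaustiveness, Proposition \ref{prp:XGH} gives that every $f\in\mk[G/H]$ is the image under $\grf$ of some $\sum_i g_{\grl_i}\in R_{\Omega_H}(X)$ with $g_{\grl_i}\in\grG(X,\calL_{\grl_i})$, so $f=\sum_i\pi_H^*(g_{\grl_i})\in\sum_i\mF_{\grl_i}$, and the non-dominant reduction above places this in $\sum_j\mF_{\grl_j}$ with $\grl_j\in\Omega_H^+$. To linearize, I would enumerate $\Omega_H^+$ as $\grl_1,\grl_2,\ldots$ in an order refining $\leq_\grs$ (possible because $\{\mu\in\Omega_H^+\st\mu\leq_\grs\grl\}\subset\grL^+\cap(\grl-\mN[\tDe])$ is finite for each $\grl$) and set $W_n:=\sum_{i\leq n}\mF_{\grl_i}$.

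The main obstacle will be verifying $\mF_{\grl_n}\cap W_{n-1}=\mF'_{\grl_n}$, which then yields $W_n/W_{n-1}\isocan V_{\grl_n}^*$. One inclusion is clear. For the other, I would split via the decomposition $\mk[G/H]=\bigoplus_{\xi\in\Xi_H}\grG(G/\Had,\calL_\xi)$ from Proposition \ref{lem:splitta}: contributions from $\grl_i$ in a coset of $\Omega_\Had$ different from that of $\grl_n$ lie in distinct summands, and so meet $\mF_{\grl_n}$ trivially. Within the coset of $\grl_n$, for $\grl_i$ incomparable to $\grl_n$ under $\leq_\grs$, choose $\mu\in\Omega_H$ with $\mu\geq_\grs\grl_n,\grl_i$ (possible since within a coset the poset is directed: differences lie in $\mZ[\tDe]$, so adding a large positive multiple of $\sum_{\tal\in\tDe}\tal$ produces a common upper bound). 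Under the injection $s^{\mu-\grl}\colon\grG(X,\calL_\grl)\hookrightarrow\grG(X,\calL_\mu)$, the submodule $\mF_\grl$ corresponds to $F_{\mu,\mu-\grl}=s^{\mu-\grl}\grG(X,\calL_\grl)$, i.e. the sections of $\calL_\mu$ vanishing to order $\geq(\mu-\grl)_\tal$ along $X_\tal$ for each $\tal$. The intersection of such vanishing-order conditions is their coefficient-wise maximum, so $F_{\mu,\mu-\grl_n}\cap F_{\mu,\mu-\grl_i}=F_{\mu,\mu-\min(\grl_n,\grl_i)}$, which corresponds under $\pi_H^*$ to $\mF_{\min(\grl_n,\grl_i)}\subset\mF'_{\grl_n}$ since incomparability forces $\min(\grl_n,\grl_i)<_\grs\grl_n$ strictly.
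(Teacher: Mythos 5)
Your route is the one the paper intends — it states this corollary without proof, as an immediate consequence of Proposition \ref{prp:XGH} together with the facts $\mF_\mu\subset\mF_\grl$ for $\mu\leq_\grs\grl$ and $\mF_\grl/\mF'_\grl\isocan V_\grl^*$ — and most of the detail you supply is correct: the elimination of non-dominant indices via \eqref{lagrada}, exhaustiveness, the finiteness of the lower intervals, the splitting along cosets of $\Omega_\Had$, and the identity $\mF_{\grl_i}\cap\mF_{\grl_n}=\mF_{\min(\grl_n,\grl_i)}$ obtained by comparing vanishing orders along the $X_\tal$ inside a common $\grG(X,\calL_\mu)$.

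There is, however, a genuine gap at the decisive step. What you need is $\bigl(\sum_{i<n}\mF_{\grl_i}\bigr)\cap\mF_{\grl_n}\subset\mF'_{\grl_n}$, but you only verify $\mF_{\grl_i}\cap\mF_{\grl_n}\subset\mF'_{\grl_n}$ for each $i$ separately, and in general $(A+B)\cap C$ strictly contains $(A\cap C)+(B\cap C)$; the modular law does not help because the various $\mF_{\grl_i}$ do not contain one another. Concretely, after transporting everything into $\grG(X,\calL_\mu)$, an element $w=\sum_i w_i$ with $w_i\in F_{\mu,\mu-\grl_i}$ can vanish to order $\geq(\mu-\grl_n)_\tal$ along each $X_\tal$ without any individual $w_i$, or any regrouping obtained from pairwise comparisons, doing so — already three pairwise incomparable exponents defeat the ``upgrade one summand at a time'' device. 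What is actually needed is that the subspaces $F_{\mu,\nu}=s^\nu\grG(X,\calL_{\mu-\nu})$ generate a \emph{distributive} lattice of submodules. This is true but requires an extra input: using \eqref{lagrada} and the finiteness of the filtration, lift for each $\grl'\in\Pi^+$ a basis of $V_{\grl'}^*\isocan\grG(X,\calL_{\grl'})/\sum_{\nu>_\grs 0}F_{\grl',\nu}$ to $\grG(X,\calL_{\grl'})$; these lifts make $\bigoplus_\nu\grG(X,\calL_{\mu-\nu})$ a free module over $\mk[s_\tal\st\tal\in\tDe]$ in which every $F_{\mu,\nu}$ is a monomial submodule, and monomial submodules of a free module do form a distributive lattice. (In characteristic zero one may instead invoke complete reducibility plus the multiplicity-one consequence of \eqref{lagrada}.) Alternatively, the whole issue can be sidestepped by exhausting each coset by a cofinal totally ordered chain $\mu_0<_\grs\mu_0+\nu_0<_\grs\mu_0+2\nu_0<_\grs\cdots$ with $\nu_0=\sum_{\tal\in\tDe}\tal$, where only pairwise intersections and the modular law are needed to identify the graded pieces; your argument should either take that route or supply the distributivity statement explicitly.
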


We are now going to use this filtration to study the ring of
invariants $\mk[G/H]^K$. We  first need a well known lemma.

\begin{lem}\label{lem:pesoalto} Fix a dominant weight
  $\lambda\in\Omega^+$.

  \begin{enumerate}[\indent i)]
  \item Let $\phi \in V_\grl^*$ be a nonzero $\Hz$ invariant and let
    us consider the decomposition of $V_\grl^*$ with respect to the
    action of $T$. Then the lowest weight component of $\phi$ is not
    zero;
  \item Let $p^\grl \in \grG(X,\calL_\grl)$ denote the $\Hz$ invariant
    defined in formula (\ref{linvario}) and consider the decomposition
    of $p^\grl \in \grG(X,\calL_\grl)$ with respect to the action of
    $T$.  Then the lowest weight component of $p^\grl$ is not zero.
  \end{enumerate}
\end{lem}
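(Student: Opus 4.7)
The plan is to prove part (i) by an open-orbit argument on $G/B$ and to deduce (ii) from (i) via the filtration on $\grG(X,\calL_\grl)$ from Section \ref{ssec:X}.

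For (i), I identify $V_\grl^*$ with $\grG(G/B,\calL_\grl)$. The paper's choice of Borel $B$ (minimizing $\dim(B\cap\grs(B))$) ensures that $\goh+\gob=\gog$, equivalently that the orbit $\Hz\cdot eB$ is open in $G/B$. Since $\phi$ is a nonzero $\Hz$-invariant, its zero locus is a proper $\Hz$-stable closed subvariety of $G/B$ and cannot contain this open orbit; hence $\phi(eB)\neq 0$. On the other hand, evaluation at $eB$ is $T$-equivariant with target the one-dimensional fiber $\calL_\grl|_{eB}\cong \mk_{-\grl}$ of $T$-weight $-\grl$, and therefore annihilates every $T$-weight component of $\phi$ except the one of weight $-\grl$. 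Since $V_\grl$ has highest weight $\grl$ in the paper's convention, $-\grl$ is the lowest weight of $V_\grl^*$, and the claim follows.

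For (ii), I use the decreasing filtration $\{F_{\grl,\nu}\}$ on $\grG(X,\calL_\grl)$ together with the isomorphism $F_{\grl,0}/\sum_{\nu'>_\grs 0}F_{\grl,\nu'}\cong V_\grl^*$ given by restriction to $\XD$. The image of $p^\grl$ under this projection is $\bar p^\grl=\prod_{\tal} \bar p_\tal^{a_\tal}$ with $\bar p_\tal=p_\tal|_{\XD}$; this is a nonzero $\Hz$-invariant in $V_\grl^*$, since each $\bar p_\tal\neq 0$ by construction and $\XD$ is irreducible. Part (i) then yields $(\bar p^\grl)_{-\grl}\neq 0$. Moreover, for every $\mu\in\Pi^+$ with $\mu<_\grs\grl$ one has $\mu<\grl$ in the usual dominance order (as $\tDe\subset\mN[\grD]$); hence the weights of $V_\mu^*$ are all $\geq -\mu > -\grl$, so $-\grl$ appears only in the top graded piece. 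Consequently the projection from the $(-\grl)$-weight space of $\grG(X,\calL_\grl)$ onto $(V_\grl^*)_{-\grl}$ is an isomorphism of one-dimensional spaces, and the nonvanishing of $(\bar p^\grl)_{-\grl}$ forces $(p^\grl)_{-\grl}\neq 0$.

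The main obstacle is establishing the openness of $\Hz\cdot eB$ in $G/B$. This is a standard fact on symmetric varieties and reduces to a direct verification of the identity $\gob+\goh=\gog$ from the minimality of $B\cap\grs(B)$, using the description of $\goh$ as the $\grs$-fixed subalgebra of $\gog$.
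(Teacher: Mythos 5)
Your proof is correct. Part (ii) is essentially the paper's argument: you pass to the top quotient $\grG(X,\calL_\grl)/\sum_{\nu>_\grs 0}F_{\grl,\nu}\isocan V_\grl^*$, note that the image $\bar p^\grl=\prod_\tal\bar p_\tal^{a_\tal}$ is a nonzero $\Hz$-invariant because each $\bar p_\tal$ is nonzero on the irreducible $\XD$, and invoke (i); your observation that $\mu<_\grs\grl$ forces $\mu<\grl$ in the dominance order, so that the weight $-\grl$ lives only in the top graded piece, is precisely the point the paper leaves implicit when it says ``it suffices to consider the image $\bar p^\grl$.'' Part (i) is where you genuinely diverge. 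The paper decomposes $\phi=w+av$ with $v$ the lowest weight vector and $w$ in the $B$-stable complement $V'$, argues that the $G$-submodule generated by $\phi$ must contain the unique (up to scalar) $U^-$-fixed vector $v$, and uses the density of $B\cdot\Hz$ in $G$ to identify that submodule with the linear span of $B\cdot\phi$, which would be trapped in $V'$ if $a=0$. You instead evaluate the section $\phi\in\grG(G/B,\calL_\grl)$ at the base point of the open $\Hz$-orbit: invariance of $\phi$ and irreducibility of $G/B$ force $\phi(eB)\neq 0$, while $T$-equivariance of evaluation into the one-dimensional fiber of weight $-\grl$ kills every weight component except the lowest one. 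Both arguments hinge on exactly the same input --- the density of $\Hz B$ in $G$ for the Borel minimizing $\dim(B\cap\grs(B))$, equivalently $\goh+\gob=\gog$, which the paper also uses without further comment and which is established in the references \cite{DP1}, \cite{DS} --- so deferring it is not a gap. What your version buys is directness and characteristic independence for free: you avoid the representation-theoretic facts about the unique $U^-$-fixed line in the dual Weyl module and the $B$-stability of the complement $V'$, replacing them with the single geometric remark that a nonzero invariant section cannot vanish on a dense orbit.
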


\begin{proof}   In $V_\grl^*$ there is a non zero vector $v$ fixed
  by the maximal unipotent subgroup $U^{-}$ opposite to $B$. This
  vector is unique up to a non zero scalar and has weight $-\grl$
  which is the lowest weight of $V_\grl^*$. Write $\phi= w+av$ with
  $w$ lying in the unique $T$ stable complement $V'$ of the one
  dimensional space spanned by $v$ and $a\in \mk$. $V'$ is $B$ stable. We need to prove
  $a\neq 0$.

Consider the $G$ submodule $W$ generated by $\phi$. Since $W$
  must contain a non zero vector fixed by $U^-$, it has to contain
  $v$.

  On the other hand $B \cdot \Hz$ is dense in $G$ so the subspace $W$
  is equal to the space spanned by the vectors $b\cdot\phi$ with $b\in
  B$.  If $a=0$ then $W$ would be contained in $V'$ giving a
  contradiction. This proves $i)$.
  
 To see  $ii)$  it suffices to  consider  the image $\bar p ^\grl$ in
  $\grG(X,\calL_\grl)/F'_{\grl,\grl}\isocan V_\grl^*$ which is non
  zero by the very definition of $p^\grl$.
\end{proof}

We can now prove Richardson Theorem (see \cite{Rich} Corollary 11.5).
Notice that, since $N_\Hz(S)\subset \Hz \subset K$ the inclusion of $S_H$ in
$G/H$ induces a map from $\tW\backslash S_H$ to $K
\backslash\!\backslash G/H$.

\begin{teo}\label{teo:Rich} Let $\Hz \subset K \subset H$ be a 
  subgroup of $H$. Then the inclusion $S_H\subset G/H$ induces an
  isomorphism $\tW\backslash S_H \isocan K \backslash\!\backslash
  G/H$.
\end{teo}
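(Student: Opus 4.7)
The plan is to show that the restriction $\rho\colon \mk[G/H]^K\to \mk[S_H]^{\tW}$ induced by $S_H\subset G/H$ is an isomorphism of rings; taking $\Spec$ then gives the theorem. The map is well defined because $N_\Hz(S)\subset \Hz\subset K$ and $N_\Hz(S)$ acts on $S_H$ through its quotient $\tW$.

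I would parametrize both sides by $\Omega_H^+$. On the target, $\mk[S_H]=\mk[\Omega_H]$ by Lemma~\ref{lem:sferici}~(v); since $-\Omega_H^+$ is a fundamental domain for the $\tW$-action on $\Omega_H$, the orbit sums
$$m_\grl:=\sum_{\chi\in\tW\cdot(-\grl)}e^\chi,\qquad \grl\in\Omega_H^+,$$
form a $\mk$-basis of $\mk[S_H]^{\tW}$. On the source, Proposition~\ref{prp:XGH} supplies the increasing filtration $\{\mF_\grl\}_{\grl\in\Omega_H^+}$ of $\mk[G/H]$ with $\mF_\grl/\mF'_\grl\isocan V^*_\grl$. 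Intersecting with $K$-invariants, the graded pieces $(V^*_\grl)^K$ are all one-dimensional: they are sandwiched between the unique $H$-fixed line (nonzero because $\grl\in\Omega_H^+$, by Corollary~\ref{cor:invV}) and the unique $\Hz$-fixed line (Vust), hence coincide with both. In particular $\mk[G/H]^K$ is exhausted by an $\Omega_H^+$-indexed increasing filtration with one-dimensional quotients.

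The core of the argument is to compute $\rho$ on the elements $\pi_H^*(p^\grl)$. By Proposition~\ref{prp:invX} the pullback $\pi_H^*(p^\grl)$ lies in $\mk[G/H]^H\subset \mk[G/H]^K$, and its class in $\mF_\grl/\mF'_\grl\isocan V^*_\grl$ is a nonzero scalar multiple of the $\Hz$-fixed vector. By Lemma~\ref{lem:pesoalto}~(ii) this class has nonzero component on the lowest $T$-weight $-\grl$; since $-\grl$ does not appear in any $V^*_{\grl-\nu}$ with $\nu>_\grs 0$, the $(-\grl)$-weight subspace of $\mF_\grl$ is one-dimensional and $\pi_H^*(p^\grl)$ has nonzero component in it. Moreover this weight space is identified via the quotient with the $U^-$-fixed line of $V^*_\grl$, which comes from a section of $\calL_\grl$ nonvanishing on the open $B^-$-orbit of $X$ (in particular at $x_\Had$), so restriction to $S_H$ is injective on it. All other $S$-weights arising in $\pi_H^*(p^\grl)\ristretto_{S_H}$ come from weights of $V^*_{\grl-\nu}$ with $\nu\geq_\grs 0$; the antidominant representatives of the corresponding $\tW$-orbits are $-\mu$ with $\mu\in\Omega_H^+$ and $\mu\leq_\grs\grl$. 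This yields the triangular expression
$$\pi_H^*(p^\grl)\ristretto_{S_H}=c_\grl\, m_\grl+\sum_{\mu\in\Omega_H^+,\ \mu<_\grs\grl}c_\mu\, m_\mu,\qquad c_\grl\neq 0,$$
and, together with the one-dimensional graded pieces of $\mk[G/H]^K$, forces $\rho$ to be an isomorphism.

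The main obstacle I expect is the nonvanishing of $c_\grl$. The weight-combinatorial triangularity is routine, but verifying that the lowest-weight line of $V^*_\grl$ is detected by restriction to $S_H$ requires identifying the one-dimensional $(-\grl)$-weight subspace of $\mF_\grl$ with a $U^-$-fixed (i.e.\ lowest-weight) section of $\calL_\grl$ that does not vanish at the base point $x_\Had$; this in turn rests on the fact that $x_\Had$ lies in the open $B^-$-orbit of $X$. Once this is checked, the remaining steps (the $\leq_\grs$-triangularity and the dimension-matching on graded pieces) are bookkeeping.
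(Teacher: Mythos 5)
Your argument is correct and follows essentially the same route as the paper's proof: restriction of functions to $S_H$, the basis of $\tW$-orbit sums of $\mk[S_H]^{\tW}$ indexed by $\Omega_H^+$, the filtration $\mF_\grl$ with one-dimensional spaces of $K$-invariants in each graded piece spanned by $\pi_H^*(p^\grl)$, and the $\leq_\grs$-triangularity with nonzero leading coefficient obtained from Lemma \ref{lem:pesoalto}. The only (equally valid) divergence is the final nonvanishing step: you evaluate the lowest-weight section at the base point $x_\Had$, which lies in the open $B^-$-orbit, whereas the paper evaluates it at the unique $B$-fixed point of the closed orbit $\XD$, which lies in the closure of $S_H$.
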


\begin{proof}
  By the definition of $\tW$, the restriction of functions from $G/H$
  to $S_H$ induces a homomorphism
 $$d:\mk[G/H]^K\to \mk[S_H]^{\tW}.$$
 We claim that $d$ is an isomorphism.

 To see this we first make some remarks on the $K$ invariants of
 $\mk[G/H]$.  For $\grl \in \Omega_H^+$ let $f^\grl :=
 \pi_H^*(p^\grl)$. Arguing as in Proposition \ref{prp:invX} it is
 easy to see that the elements $f^\grl$ with $\grl \in \Omega_H^+$ are
 a basis of $\mk[G/H]^K$ as a vector space. In particular for each
 $\lambda\in \Omega_H^+$, $\mF_\grl^{K}/ {\mF'}_\grl^{K}$ is one
 dimensional and spanned by the class of $f^\grl$ (notice that
 $\Omega_H\subset \Omega_K$).

 The computation of the $\tW$ invariants of the ring $\mk[S_H]$ is
 also very simple. Let $\mk[S_H]=\bigoplus_{\grl \in \grL_{S_H}} \mk
 \grf_\grl$ where $\grf_\grl$ is a function of weight $\grl$. We know
 by Theorem \ref{teo:Vust} and Lemma \ref{lem:sferici} that the
 restriction $r$ of character from $T$ to $S$ induces an isomorphism
 between $\grL_{S_H}$ and $\Omega_H$ so we identify the two lattices.
 Also a weight $\lambda\in \Omega_H$ is dominant with respect to
 $\tDe$ if and only if it is dominant with respect to $\Delta$.  If
 $\grl \in \grL_{S_H}^+$ we set
  $$\psi_\grl=\sum_{\psi \in \tW \cdot \grf_{-\grl}} \psi.$$
 The elements $\psi_\grl$ with $\grl\in \Omega_H^+$ are clearly a
 basis of $\mk[S_H]^\tW$.
    
 Given $\grl\in \Omega_H^+$, let $U_\grl$ denote the span of elements
 $\psi_\mu$ with $\mu \in \Omega_H^+$ and $\mu \leq_\grs \grl$.

 Notice that $f^\grl_S:=d(f^\grl)$ lies in $U_\grl$. Indeed $f^\grl_S$
 is a $\tW$ invariant and its weights are in a subset of the weights
 appearing in $V_\grl^*$. Thus for each $\lambda\in \Omega_H^+$,
 $d(\mF_\grl)^K\subseteq U_\grl$. We claim that $d$ maps isomorphically
 $\mF_\grl^{K}$ onto $U_\grl$. This will imply our claim.
  
 By an easy induction we need  to show that $f^\grl_S \notin
 \sum_{\mu<_\grs\grl \mand \mu \in \Omega_H^+} U_\mu$.  Using Lemma
 \ref{lem:pesoalto} it suffices to prove that the restriction to $S_H$
 of a lowest weight vector $h$ in $\mF_\grl$ is non zero.  The closure
 of $S_H$ in $X$ contains the unique point of the closed orbit $\XD$
 fixed by $B$.  $h$ does not vanish at this point. Since $h$ is non
 zero at a point in the closure of $S_H$ it cannot vanish on $S_H$
 proving that $f^\grl_S$ does not lie in $\sum_{\mu<_\grs\grl \mand
   \mu \in \Omega_H^+} U_\mu$.
\end{proof}

\begin{oss}\label{laliber}
  Notice that in particular $K\lGIT G/H$ does not depend on the choice
  of the subgroup $K$ between $\Hz$ and $H$. However it is not true in
  general that the $K$ orbits in $G/H$ are the same of the $H$ orbits
  in $G/K$. To see this it is enough to take $G=SL(2,\mC)$ and $\grs$
  the conjugation by $\left( \begin{smallmatrix} i & 0 \\ 0 & -i
    \end{smallmatrix} \right)$. Then $\Hz$ is the diagonal torus and
  it is easy to check that $\Hz \left( \begin{smallmatrix} 1 & 1 \\ 0
      & 1 \end{smallmatrix} \right)\Had \neq \Had \left(
    \begin{smallmatrix} 1 & 1 \\ 0 & 1 \end{smallmatrix} \right)\Had$.
\end{oss}

To complete our picture we show, with a different proof,  another result of Richardson which tells us   the orbits of the elements in $S_H$ are precisely the closed orbits in $G/H$. 

\begin{prp}\label{prp:closed}Let $\Hz \subset H,K \subset \Had$.
  Let $s \in S_H$ then $K s$ is closed in $G/H$.
\end{prp}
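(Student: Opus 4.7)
My plan is to apply Matsushima's criterion: for a reductive group acting on an affine variety, an orbit is closed if and only if its stabilizer is reductive. Since $G/H$ is affine (because $H$ is reductive, being a finite extension of the connected reductive group $\Hz$) and $K$ is reductive for the same reason, the entire problem reduces to checking reductivity of the stabilizer $K_{sH}=K\cap sHs^{-1}$.

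The key computation of the stabilizer exploits the defining relation $\grs(s)=s^{-1}$ of anisotropic tori. For $k\in K\subset\Had$ one has $\grs(k)=k$ (up to the central kernel coming from the non-reduced structure), so the condition $s^{-1}ks\in H\subset\Had$ is equivalent, after applying $\grs$ and using $\grs(s)=s^{-1}$, to $\mathrm{Ad}(s^{2})k=k$. Passing to Lie algebras I would show
$$
\Lie(K_{sH})=\gok\cap\Lie(Z_G(s^{2}))=\Lie(Z_{\Hz}(s^{2})),
$$
so that $K_{sH}^{0}=Z_{\Hz}(s^{2})^{0}$.

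The remaining step is to verify that $Z_{\Hz}(s^{2})$ is reductive. Since $s^{2}$ is semisimple and $G$ is semisimple and simply connected, Steinberg's theorem gives that $Z_G(s^{2})$ is connected and reductive. The involution $\grs$ preserves $Z_G(s^{2})$ because $\grs(s^{2})=s^{-2}$ has the same centralizer as $s^{2}$; hence $Z_{\Hz}(s^{2})=Z_G(s^{2})^{\grs}$ is the fixed-point subgroup of an involution of a connected reductive group, and so it is reductive. Since the component group $K_{sH}/K_{sH}^{0}$ is finite, $K_{sH}$ is reductive, and Matsushima's criterion yields closedness of $K\cdot sH$ in $G/H$.

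The main technical obstacle I expect is the possibly non-reduced structure of $\Had$, $H$, and hence of $K_{sH}$, which in arbitrary characteristic $\neq 2$ I would handle by matching Lie algebras and identity components and by appealing to the version of Matsushima's criterion valid for geometrically reductive group schemes. As a fallback using the machinery already built, I could argue from Theorem \ref{teo:Rich}: since $N_{\Hz}(S)\subset K$, all $\tW$-translates of $s$ collapse into a single $K$-orbit $K\cdot s$, and a dimension count combined with the unique-closed-orbit property of the GIT fiber over $[s]\in\tW\backslash S_H$ should identify $K\cdot s$ as that closed orbit.
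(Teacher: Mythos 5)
Your argument hinges on the claim that for a reductive group acting on an affine variety an orbit is closed \emph{if} its stabilizer is reductive. That is not Matsushima's criterion, and it is false: $\mG_m$ acting on $\mA^1$ by scaling has trivial (hence reductive) stabilizer at $1$, yet the orbit $\mA^1\senza\{0\}$ is not closed. Matsushima only gives the implication you do not need (closed $\Rightarrow$ affine $\Rightarrow$ reductive stabilizer). Your computation of the stabilizer is essentially right --- using $\grs(s)=s^{-1}$ one does find $\Hz\cap s\Hz s^{-1}=Z_{\Hz}(s^2)$, which is reductive as the fixed points of an involution of the reductive group $Z_G(s^2)$ --- but this does not yield closedness of the orbit. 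What would rescue the argument in characteristic zero is Luna's theorem that the product of two reductive subgroups of a reductive group is closed, applied to $K$ and $sHs^{-1}$ (and then the stabilizer computation is not even needed); but the paper works over any field of characteristic $\neq 2$, where that theorem is not available in this form, so a different argument is required. The paper's proof is exactly such an argument: it embeds $G/\Hz$ into $GL(V)$ via $g\mapsto g\grs(g)^{-1}$, sending $s$ to the semisimple element $x=s^2$ on which $\Hz$ acts by conjugation, and then produces an explicit closed $\Hz$-stable subset $M\ni x$ (cut out by the minimal polynomial of $x$, the characteristic polynomial of $Ad_x$, and lower bounds on $\dim(\goh\cap W_\mu(y))$) on which every $\Hz$-orbit has the same dimension and hence is closed. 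This is a characteristic-free dimension argument with no analogue in your proposal.

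Your fallback via Theorem \ref{teo:Rich} is also not a proof as stated. Knowing that the fibre of the quotient map over $[s]\in\tW\backslash S_H$ contains a unique closed orbit does not identify that orbit with $K\cdot s$: if $K\cdot s$ were not closed, the unique closed orbit would simply be a strictly smaller-dimensional orbit sitting in $\overline{K\cdot s}\senza K\cdot s$, and you offer no dimension count or minimality argument to exclude this. Indeed the paper's logic runs the other way: it first proves Proposition \ref{prp:closed} directly and only then uses it, together with the quotient description, to conclude in Theorem \ref{teo:closed} that the closed orbits are exactly those meeting $Y_S$.
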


\begin{proof}
  Since $\Hz$ has finite index both in $H$ and in $K$, it is enough to study the
  case $H=K=\Hz$. 

  Fix $V$ to be a finite dimensional faithful representation of $G$.
  Consider the map $\chi : G/\Hz \lra G \subset GL(V)$ given
  by $\chi(g\Hz)=g\grs(g)^{-1}$. By \cite{Springer} Theorem 5.4.4 this
  is a closed immersion. Notice that $\chi(h\cdot x) = h\chi(x)h^{-1}$
  for all $h\in \Hz$ and $x\in G/\Hz$.

For $s\in S_H$ set $x = \chi(s)=s^2$. The element $s$   is   semisimple   in$GL(V)$. We
  want to prove that the orbit $\{hxh^{-1}\st h\in \Hz\}$ is closed in
  $GL(V)$. 
  
  Let $p(t)$ be the minimal polynomial of $x$. Since $x$ is
  semisimple $p(t)$ does not have  multiple roots. For all $\grl \in \mk^*$
  and $y \in G$ set $V_\grl(y)=\{v \in \gog \st Ad_y(v)=\grl v\}$.
  Notice that for $y\in T$
  $$V_\grl(y)=\begin{cases}
    \bigoplus_{\gra\in \Phi \st \alpha(y)=\grl} \gog_\gra \ \ \ \ \text{if\ } \grl \neq 1\\ 
   \got \oplus \bigoplus_{\gra\in \Phi \st
    \alpha(y)=1} \gog_\gra \ \ \ \ \text{if\ } \grl=1.
    \end{cases}
    $$
   Given  $\grl \in \mk^*$ and $\mu =
  \tfrac{1}{2}(\grl +\grl^{-1})$ set 
  
 $$W_\mu (y)=\begin{cases}V_\grl(y)\oplus
  V_{\grl^{-1}}(y)\ \ \ \ \text{if\ }  \grl \neq \pm 1\\ V_\grl(y)
\ \ \ \ \text{if\ } \grl = \pm 1.\end{cases}$$
 Notice that we have
  \begin{equation}
    \label{eq:hW}
\goh = \bigoplus_\mu (\goh \cap W_\mu(x))    
  \end{equation}
  since $\goh = \got^\grs \oplus \bigoplus _{\gra \in \Phi_0}\gog_\gra
  \oplus \bigoplus _{\gra \in \Phi_1^+}\mk(x_\gra+\grs(x_\gra))$ and
  $\grs(\gra) (x)= \gra(x)^{-1}$ so that $x_\gra+\grs(x_\gra) \in
  W_{\gra(x)+\gra(x)^{-1}}$. Let $n_\mu = \dim (\goh \cap W_\mu(x))$.  We define the closed subset
  $M\subset G$ as follows. An element $y\in G$ lies in $M$ if
  \begin{enumerate}[\indent i)]
  \item $p(y) = 0$
  \item  the characteristic polynomial of $Ad_y$
                   is 
                  equal to the characteristic polynomial of
            $Ad_x $
                  \item$ \dim (\goh \cap W_\mu(y))\geq n_\mu$ for all 
         $ \mu.$
\end{enumerate}
By condition iii) and relation \eqref{eq:hW} it follows that if $y\in M$,
  $\dim (\goh \cap W_\mu(y) )=  n_\mu$ for all $\mu$. In
particular $\dim \goh \cap V_1(y) = \dim \goh \cap V_1(x)$ so that
$\dim \goh \cap Z_\gog(y)= \dim \goh \cap Z_\gog(x)$. 

Now by condition
i) if $y\in M$, $y$ is semisimple and so by 
\cite{Springer} Theorem 5.4.4 we have that $Z_\gog(y)$ is the Lie
algebra of $Z_G(y)$. It follows that   $\dim \goh \cap Z_\gog(y) = \dim H \cap
Z_G(y)= n_1$ and finally $\dim \Hz y = \dim H - n_1$ so that
 every $\Hz$ orbit in   $M$ has the same dimension.
 
  In
particular every $\Hz$ orbit in $M$ is closed. Since $x \in M$  the Proposition follows.
\end{proof}
\begin{oss}\label{ilridutt} Notice that our proof works also under the slightly more general assumption that $G$ is reductive. This will be useful later on.
\end{oss}

\begin{oss}We notice that if $\Hz \subset K \subset H$ and $s\in S$
  then $Ks = Hs$ in $G/H$. Indeed by Remark \ref{laliber} we have that
  $K\backslash\backslash G/H = H\backslash\backslash G/H$ so the natural map $Kx\mapsto Hx$ from the
  set of $K$ orbits into the set of $H$ orbits is a bijection at the
  level of closed orbits. At this point  everything follows from the fact that
  $Hs$ is a union of closed $K$ orbits.
\end{oss}

\subsection{The quotient of a smooth projective toroidal compactification}\label{ssec:GITtoroidale}
Let us now assume that $\calL$ is a line bundle generated by global
sections but not necessarily ample on a given smooth, projective
toroidal compactification $Y$ of $G/H$. This is going to be useful in
order to prove Theorem \ref{teo:GIT} in its full generality.

  Since $\calL$ is spherical, $\pi_Y^*(\calL)$ is
trivial.  Notice now that the $G$ linearization of $\calL$ restricts
to a $N_\Hz(S)$ linearization of $\calL\ristretto_{Y_S}$ and that
$Z_\Hz(S)$ acts trivially on $\calL\ristretto_{Y_S}$. To see this first
notice that $Z_\Hz(S)$ acts trivially on the fiber of $\calL$ over
$y_0$. Indeed $\pi_Y^*(\calL)$ is trivial, hence $\Hz$ acts trivially on
the fiber of $\calL$ over $y_0$ and $Z_\Hz(S)\subset \Hz$.  Now,
$Z_\Hz(S)$ commutes with $S$ and $S\cdot y_0$ is dense in $Y_S$ so
necessarily $Z_\Hz(S)$ acts trivially on $\calL\ristretto_{Y_S}$.

Notice that $Y^{ss}(\calL)= Y^{ss}(\calL^n)$ for any $n>0$, so
$K\lGIT_{\calL^n} Y\isocan K\lGIT_\calL Y$. Since
$\Omega/\Omega_K$ is finite, up to taking a power of $\calL$ we can
always assume that $\pi_{Y,K}^*(\calL)\isocan \calO_{G/K}$ where
$\pi_{Y,K}:G/K\lra Y$ is defined by $\pi_{Y,K}(gK)=g\cdot y_0$. We call
these line bundles $K$ spherical.  We have
\begin{teo}\label{glinvarianti}  Let $\calL$ be a $K$ spherical line
  bundle on $Y$. Then 
$$\grG(Y,\calL)^K \isocan \grG(Y_S,\calL\ristretto_{Y_S})^\tW.$$
\end{teo}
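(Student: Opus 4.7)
The plan is to analyze the restriction map $r : \grG(Y,\calL)^K \to \grG(Y_S,\calL\ristretto_{Y_S})^\tW$ in three steps: well-definedness, dimension matching in characteristic zero, and injectivity via a triangularity argument modeled on the proof of Theorem \ref{teo:Rich}; the positive characteristic case then follows from the $\R$-structure developed in Section \ref{ssec:toroidali}. For well-definedness, since $\Hz\subset K$ one has $N_\Hz(S)\subset K$ stabilizing $Y_S$, and by the discussion preceding the statement $Z_\Hz(S)$ acts trivially on $\calL\ristretto_{Y_S}$; thus $f\ristretto_{Y_S}$ descends to a $\tW$-invariant section for any $K$-invariant $f$.

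In characteristic zero, Theorem \ref{teo:sezioniY} yields the decomposition $\grG(Y,\calL_\bgrl)=\bigoplus_{\mu\in\calA(\bgrl)} s^{\bgrl-\bmu}V_\mu^*$. Since $K$ is reductive (as $\Hz$ is reductive and $K/\Hz$ is finite), taking $K$-invariants commutes with the decomposition; each $(V_\mu^*)^K$ is one-dimensional because $\mu\in\calA(\bgrl)\subset\Omega_H^+\subset\Omega_K^+$ and $K$, being a spherical subgroup, has an open orbit on $G/B$. Thus $\dim\grG(Y,\calL)^K=|\calA(\bgrl)|$. On the toric side, $Y_S$ has the $\tW$-stable fan $\mathcal F_Y$ (Theorem \ref{teo:toroidali} iv)), and a standard toric computation together with inspection of the piecewise linear function $\bgrl$ on $C$ shows that $\dim\grG(Y_S,\calL\ristretto_{Y_S})^\tW=|\calA(\bgrl)|$: the $\tW$-orbits of weights appearing in $\grG(Y_S,\calL\ristretto_{Y_S})$ are indexed by their dominant representatives, which are exactly the elements of $\calA(\bgrl)$.

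For injectivity, set $f_\mu := s^{\bgrl-\bmu}\cdot p^\mu$ for each $\mu\in\calA(\bgrl)$, where $p^\mu\in V_\mu^*$ is the $K$-invariant from \eqref{linvario}; the $f_\mu$ form a basis of $\grG(Y,\calL)^K$. Order $\calA(\bgrl)$ by $\leq_\grs$. The claim is that $r(f_\mu)$ has a nonzero $\tW$-orbit sum contribution at weight $\mu$ and all its other $S_H$-weights are strictly smaller in this ordering, so that $\{r(f_\mu)\}$ is triangular with respect to the $\tW$-orbit-sum basis of $\grG(Y_S,\calL\ristretto_{Y_S})^\tW$, hence linearly independent; combined with the dimension equality, $r$ is a bijection. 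The nonvanishing of the leading term uses Lemma \ref{lem:pesoalto} ii): the lowest $T$-weight component of $p^\mu$ is non-zero, and as in the proof of Theorem \ref{teo:Rich} restricts non-trivially to $S_H$ because the $B$-fixed point of the closed $G$-orbit lies in the closure of $S_H$.

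For positive characteristic, Theorem \ref{teo:toroidali} and Lemma \ref{lem:PicY} iv) ensure $Y,Y_S,\calL,K,\tW$ are all defined over $\R$, and Lemma \ref{lem:invsuZ} lifts the characteristic zero $K$-invariants $f_\mu$ to $\R$-sections whose reductions modulo any odd prime are nonzero. Hence $\dim\grG(Y,\calL)^K\geq|\calA(\bgrl)|$ in every admissible characteristic; the target dimension is characteristic-independent; the triangularity argument uses only geometric facts valid in any characteristic. Therefore $r$ is still a bijection. The main obstacle I anticipate is the precise weight tracking in the third step: one must verify that multiplying $p^\mu$ by the $G$-invariant section $s^{\bgrl-\bmu}$ and then restricting to $Y_S$ produces a leading $S_H$-weight equal to $\mu$ in the form of the required $\tW$-orbit sum, so that the family $\{r(f_\mu)\}$ is genuinely triangular with respect to $\leq_\grs$.
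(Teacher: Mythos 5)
Your proposal is correct and follows essentially the same route as the paper's own proof (Lemma \ref{lem:GIT}): both rest on the Bifet-type description of sections (Theorem \ref{teo:sezioniY} and Corollary \ref{cor:sezioniY}), on the triangularity with respect to $\leq_\grs$ coming from Lemma \ref{lem:pesoalto} as in Theorem \ref{teo:Rich}, and on the flatness over $\R$ (Lemmas \ref{lem:invsuZ} and \ref{lem:piattezza}) to pass to positive characteristic. The only cosmetic difference is that the paper obtains injectivity by restricting to the open orbits and quoting Theorem \ref{teo:Rich} directly, and gets characteristic-zero surjectivity from Corollary \ref{cor:sezioniY} $ii)$ rather than from an explicit count of both dimensions; your triangular basis $\{f_\mu\}$ repackages the same computation.
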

 
We need first a well known general fact on flat schemes over $\R$ whose proof we give for completeness.

\begin{lem}\label{lem:piattezza}
  Let $U$ be a projective flat scheme over $\R$ such that there exists
  an open affine covering $\{U_1,\dots,U_n\}$ of $U$ with $U_i=\Spec
  R_i$ and $U_i\cap U_j = \Spec R_{ij}$ where $R_i$ and $R_{ij}$ are
  free $\R$ modules. Let $\calL$ be a locally free sheaf over $U$.  For each ring extension $\R\lra B$ let $U_B = U
  \times_{\Spec(\R)} \Spec(B)$ and $\calL_B$ the pull back of $\calL$ to $U_B$.
  Then:
  \begin{enumerate}[\indent i)]
  \item $\grG(U,\calL)$ is a finitely generated  free $\R$ module;
  \item the map $B\otimes_\R \grG(U,\calL)\lra \grG(U_B,\calL_B)$ is injective;
  \item moreover if $B$ is a field of characteristic $0$ then we have
    an isomorphism   $B\otimes_\R \grG(U,\calL)\isocan
    \grG(U_B,\calL_B)$.
  \end{enumerate}
\end{lem}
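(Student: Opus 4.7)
The plan is to use \v{C}ech cohomology with respect to the given cover $\{U_i\}$, combined with the fact that $\R=\mZ[1/2]$ is a principal ideal domain, so that every submodule of a free $\R$-module is itself free. Writing $C^p=\bigoplus_{i_0<\cdots<i_p}\grG(U_{i_0\cdots i_p},\calL)$ for the \v{C}ech complex of $\calL$ with respect to this cover, the first task is to check that each $C^p$ is a free $\R$-module. Since $\calL$ is locally free, $\grG(U_{i_0\cdots i_p},\calL)$ is a finitely generated projective module over the coordinate ring $R_{i_0\cdots i_p}$, hence a direct summand of a free $R_{i_0\cdots i_p}$-module; combined with the hypothesis that each $R_{i_0\cdots i_p}$ is itself a free $\R$-module, this realises $\grG(U_{i_0\cdots i_p},\calL)$ as a submodule of a free $\R$-module, and freeness over the PID $\R$ follows.

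For $i)$, Grothendieck's coherence theorem applied to the proper flat structure morphism $U\to \Spec\R$ shows that $\grG(U,\calL)$ is finitely generated over $\R$, while the left exact sequence $0\to \grG(U,\calL)\to C^0\to C^1$ exhibits it as a submodule of the free (hence torsion-free) $\R$-module $C^0$. A finitely generated torsion-free module over a PID is free, giving $i)$.

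For $ii)$, let $B^1\subset C^1$ denote the image of the \v{C}ech differential $d^0:C^0\to C^1$. As a submodule of the free $\R$-module $C^1$ over the PID $\R$, the module $B^1$ is itself free, and in particular flat. Therefore the short exact sequence $0\to \grG(U,\calL)\to C^0\to B^1\to 0$ remains exact after tensoring with $B$, and we obtain an injection $B\otimes_\R\grG(U,\calL)\hookrightarrow B\otimes_\R C^0$. One identifies $C^p\otimes_\R B$ with $C^p_B$ for the \v{C}ech complex of $\calL_B$ on $U_B$ with respect to the cover $\{(U_i)_B\}$, using that base change of a locally free sheaf commutes with taking sections on each affine piece; the injection above therefore factors through the kernel $\grG(U_B,\calL_B)$ of $C^0_B\to C^1_B$, proving $ii)$.

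For $iii)$, any field $B$ of characteristic zero contains $\mQ$, which is a localization of $\R$ and hence flat over $\R$; since $B$ is free (in particular flat) over $\mQ$, it is flat over $\R$. Thus the functor $B\otimes_\R-$ is exact, and applying it to $0\to \grG(U,\calL)\to C^0\to C^1$ commutes with the kernel on the left, yielding $B\otimes_\R\grG(U,\calL)\isocan \grG(U_B,\calL_B)$. The only substantive step in the argument is the initial freeness of each \v{C}ech group $C^p$ as an $\R$-module; everything else is a formal consequence of flat base change and the PID property of $\R$.
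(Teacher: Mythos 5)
Your proof is correct and follows essentially the same route as the paper: the left-exact \v{C}ech sequence $0\to\grG(U,\calL)\to C^0\to C^1$, freeness of submodules of free modules over the PID $\R$, exactness of $0\to\grG(U,\calL)\to C^0\to B^1\to 0$ after tensoring because the image of the differential is free (the paper calls it $M$), and flatness of characteristic-zero fields for $iii)$; the only cosmetic difference is that you invoke projectivity of $\grG(U_{ij},\calL)$ over $R_{ij}$ where the paper refines the cover to trivialize $\calL$. One small caution: the hypothesis only guarantees that $R_i$ and $R_{ij}$ are free over $\R$, so your claim that \emph{every} $C^p$ is free is not justified for $p\geq 2$ --- harmless here, since only $C^0$ and $C^1$ enter the argument.
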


\begin{proof}
  The fact that $\grG(U,\calL)$ is finitely generated is the content
  of Theorem III.5.2 point a) in \cite{Hart}.

  We can refine the covering $U_i$ in such a way it has the same
  properties and moreover it is such that $\calL\ristretto_{U_i}$ is
  defined by a free $R_i$ module of rank $1$. We have the exact
  sequence:
  $$
  \begin{CD}
    0 \to \grG(U,\calL) @>{r_1}>> \prod \grG(U_i,\calL) @>{r_2}>>
    \prod \grG(U_i\cap U_j,\calL)
  \end{CD}
  $$
  where $r_1$ and $r_2$ are given by restriction of sections. In
  particular $\grG(U,\calL)$ is a submodule of $\prod \grG(U_i,\calL)$
  which is a free $\R$ module, hence, since $\R$ is a PID,
  $\grG(U,\calL)$ is a free $\R$ module.

  Let $ M$ denote the image of $r_2$. $M$ is a submodule of $\prod
  \grG(U_i\cap U_j,\calL)$, so also $M$ is a free $\R$ module. Write
  $r_2=\mi\circ r'$ with $r': \prod \grG(U_i,\calL)\to M$ and
  $\mi:M\to \prod \grG(U_i\cap U_j,\calL) $.  For any $\R$ algebra $B$
  we can tensor by $B$ and, since by definition
  $\grG(U_i\times_{\Spec(\R)}\Spec(B),\calL_B) = B \otimes_\R
  \grG(U_i,\calL)$, we get the exact sequence
  \begin{equation}\label{eq:M}
    \begin{CD}
      0 \to \grG(U,\calL)\otimes_\R B @>{r_1\otimes \id_B}>> \prod
      \grG(U_i\times_{\Spec(\R)}\Spec(B),\calL_B) @>{r' \otimes
        \id_B}>> M\otimes_\R B \to 0
    \end{CD}
  \end{equation}
  from which deduce that the map from $\grG(U,\calL)\otimes_\R B$ to
  $\grG(U_B,\calL_B)$ is injective. This proves $i)$ and $ii)$.

  Now assume that $B$ is a field of characteristic zero.  $\mi$ is an
  inclusion between free $\R$ modules, so since $B$ has characteristic
  zero we have that $\mi \otimes \id_B: M\otimes_\R B \lra
  \prod\grG(U_i\cap U_j,\calL)\otimes_\R B = \prod\grG((U_i\cap
  U_j)\times_{\Spec(\R)} \Spec(B) ,\calL_B)$ is injective. It follows
  that $M\otimes_\R B$ is the image of $r_2 \otimes \id_B$ and by the
  exact sequence \eqref{eq:M} we get that $\grG(U,\calL)\otimes_\R B$
  is equal to the space of sections of $\calL_B$ on
  $U_B$.
\end{proof}

Notice that by Remark \ref{oss:123}   3) we can apply this Lemma
to a toroidal compactification.  We obtain

\begin{lem}\label{lem:GIT}
  Let $Y$ be a smooth toroidal compactification of $G/H$ and let
  $\calL$ be a $K$ spherical line bundle on $Y$ generated by global
  sections. Then the restriction of sections from $Y$ to $Y_S$ induces
  an isomorphism $\grG(X,\calL)^{K} \isocan
  \grG(Y_S,\calL\ristretto_{Y_S})^{\tW}$.
\end{lem}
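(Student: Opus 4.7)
The plan is to compute both sides in characteristic zero using the explicit decomposition provided by Theorem~\ref{teo:sezioniY}, and then to extend the conclusion to arbitrary characteristic via the $\R$-integral framework of Section~\ref{ssec:toroidali} together with Lemmas~\ref{lem:piattezza} and~\ref{lem:invsuZ}.

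First I treat characteristic zero. Theorem~\ref{teo:sezioniY} gives $\grG(Y,\calL_\bgrl)=\bigoplus_{\mu\in\calA(\bgrl)}s^{\bgrl-\bmu}V_\mu^*$. Since $\Hz$ is reductive and $K/\Hz$ is finite, taking $K$-invariants preserves this decomposition. From $K\subset H$ one obtains $\Omega_H^+\subset\Omega_K^+$, so $\calA(\bgrl)\subset\Omega_K^+$; by Theorem~\ref{teo:Vust}, each $(V_\mu^*)^K$ is one-dimensional, coinciding with $(V_\mu^*)^{\Hz}$ and generated by (the image of) the vector $p^\mu$ of Proposition~\ref{prp:invX}. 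Call this generator $\varphi_\mu$. Hence $\dim\grG(Y,\calL)^K=|\calA(\bgrl)|$. On the toric side, $\grG(Y_S,\calL\ristretto_{Y_S})$ is a sum of one-dimensional $S_H$-weight spaces at the lattice points of a polytope $P_\bgrl\subset\grL_{S_H}\otimes\mR$. The $K$-sphericality of $\calL$ ensures $Z_\Hz(S)$ acts trivially on $\calL\ristretto_{Y_S}$, so the $N_\Hz(S)$-linearization descends to an action of $\tW$ permuting the weight spaces without scalar twists. The $\tW$-invariants are therefore spanned by orbit sums; identifying $\Omega_H$ with $\grL_{S_H}$ via Lemma~\ref{lem:sferici}~v) and using that $\Omega_H^+$ is a fundamental domain for $\tW$ on $\Omega_H$, one finds $P_\bgrl\cap\Omega_H^+=\calA(\bgrl)$, so $\dim\grG(Y_S,\calL\ristretto_{Y_S})^\tW=|\calA(\bgrl)|$ as well.

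Next I show that the restriction map $\rho:\grG(Y,\calL)\to\grG(Y_S,\calL\ristretto_{Y_S})$ (surjective in characteristic zero by Corollary~\ref{cor:sezioniY}~ii) and $\tW$-equivariant) sends $K$-invariants injectively into $\tW$-invariants. Mimicking the strategy of Theorem~\ref{teo:Rich}: by Lemma~\ref{lem:pesoalto} the lowest-$T$-weight component of $\varphi_\mu$ is non-zero, and since the closure of $S_H$ inside $X$ contains the unique $B$-fixed point of the closed orbit $\XD$, the restriction of $s^{\bgrl-\bmu}\varphi_\mu$ to $Y_S$ is a non-zero $\tW$-invariant whose lowest $S_H$-weight depends on $\mu$. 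As these lowest weights are distinct for distinct $\mu\in\calA(\bgrl)$, the images $\{\rho(s^{\bgrl-\bmu}\varphi_\mu)\}_{\mu\in\calA(\bgrl)}$ are linearly independent. Combined with the matching dimensions above, this yields the isomorphism in characteristic zero.

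Finally, to pass to characteristic $\neq 2$, Remark~\ref{oss:123}~3) and Lemma~\ref{lem:piattezza} show that $\grG(Y,\calL)$ and $\grG(Y_S,\calL\ristretto_{Y_S})$ are free $\R$-modules of ranks equal to their characteristic-zero dimensions, and $\rho$ and the $\tW$-action are defined over $\R$. The orbit-sum description of $\grG(Y_S,\calL\ristretto_{Y_S})^\tW$ is purely combinatorial, so its rank remains $|\calA(\bgrl)|$ after specialization. By Corollary~\ref{cor:invV} and Proposition~\ref{prp:invX} the $\varphi_\mu$ can be chosen $\R$-integral with non-zero reduction at every odd prime, so the $s^{\bgrl-\bmu}\varphi_\mu$ give $|\calA(\bgrl)|$ elements of $\grG(Y_\mk,\calL_\mk)^K$; the lowest-weight injectivity argument above is characteristic-independent and shows these map injectively into $\grG(Y_{S,\mk},\calL_\mk\ristretto_{Y_{S,\mk}})^\tW$. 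A dimension match forces the induced map on invariants to be an isomorphism, concluding the proof. The main obstacle is precisely this positive-characteristic step: Corollary~\ref{cor:sezioniY} is unavailable, so one cannot directly compute $\dim\grG(Y_\mk,\calL_\mk)^K$; the delicate point is to combine injectivity via $\rho$ with the $\R$-integral lifting of the $\varphi_\mu$ to prevent a jump in invariants, which implicitly invokes the good-filtration structure over $\R$ inherited from the filtration $F_{\bgrl,\nu}$ via $\phi:Y\to X$.
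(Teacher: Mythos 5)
Your characteristic-zero argument is essentially correct, though it takes a more computational route than the paper: you count invariants on both sides via Theorem \ref{teo:sezioniY} and the lattice points of the moment polytope of $Y_S$, whereas the paper reduces everything to the open orbit and quotes Theorem \ref{teo:Rich} to transfer the identification $U^K\isocan U_S^{\tW}$ to the submodule $U=\grG(Y,\calL)\subset\mk[G/K]$, using Corollary \ref{cor:sezioniY} ii) only for surjectivity. Either works in characteristic zero.

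The gap is in the positive-characteristic step. What your argument actually delivers there is: $|\calA(\bgrl)|$ elements of $\grG(Y_\mk,\calL_\mk)^{K}$ whose restrictions to $Y_S$ are linearly independent in a $\tW$-invariant space of dimension exactly $|\calA(\bgrl)|$. That proves \emph{surjectivity} of the restriction on invariants, but not injectivity: your lowest-weight argument only separates the specific lifted sections $s^{\bgrl-\bmu}\varphi_\mu$, and ``a dimension match forces an isomorphism'' presupposes the upper bound $\dim\grG(Y_\mk,\calL_\mk)^{K}\leq|\calA(\bgrl)|$, which is exactly what can fail a priori --- invariants may jump under reduction mod $p$ (Lemma \ref{lem:invsuZ} gives an inequality in one direction only). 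Your proposed patch, invoking ``the good-filtration structure over $\R$ inherited from $F_{\grl,\nu}$ via $\phi:Y\to X$'', is not available: that filtration lives on $\grG(X,\calL_\grl)$, and the paper deliberately does not establish Theorem \ref{teo:sezioniY} or a good filtration for $\grG(Y,\calL_\bgrl)$ in positive characteristic (see the Remark following Corollary \ref{cor:sezioniY}). The paper closes the gap with a characteristic-free injectivity argument you are missing: trivialize $\pi_{Y,K}^*(\calL)$ to embed $\grG(Y,\calL)^K\hookrightarrow\mk[G/K]^K$ and $\grG(Y_S,\calL\ristretto_{Y_S})^{\tW}\hookrightarrow\mk[S_K]^{\tW}$, and note that the map $\mk[G/K]^K\to\mk[S_K]^{\tW}$ is injective by Theorem \ref{teo:Rich}, which is proved in all characteristics. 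With that upper bound in hand, your lower bound (or the paper's, via Lemma \ref{lem:piattezza} ii)) finishes the proof.
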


\begin{proof}
  We prove first that this map is injective.  Consider the pull back
  $\pi_{Y,K}^*(\calL)$. By hypothesis this is isomorphic to the trivial
  line bundle on $G/K$. So a trivialization of it induces inclusions
  $\Gamma(Y,\calL) \subset \mk[G/K]$ and
  $\Gamma(Y_S,\calL\ristretto_{Y_S}) \subset \mk[S_K]$. We get a
  commutative diagram
  $$
  \begin{CD}
    \Gamma(Y,\calL)^K @>>> \mk[G/K]^K \\
    @VVV @VVV \\
    \Gamma(Y_S,\calL\ristretto_{Y_S})^\tW @>>> \mk[S_K]^\tW
  \end{CD}
  $$ 
  where the horizontal maps are the induced by the pull back of
  sections and vertical maps are given by restriction of sections.
  Since the inclusions $G/H\subset Y$ and $S_H\subset Y_S$ are open the two
  horizontal maps are injective and by Theorem \ref{teo:Rich} also the
  right vertical map is injective. It follows that also the vertical
  map on the left is injective.

  In order to prove surjectivity it is enough to prove that we have
  enough invariants. First we prove this result in characteristic
  zero. Let $U$ be a $G$ submodule of $\mC[G/K]$ and $U_S$ be its image in
  $\mC[S_K]$. Observe that by Theorem \ref{teo:Rich} we have an
  isomorphism between $U^K$ and $U_S^\tW$.
  
  Set $U$ equal to the image of $\grG(Y,\calL)$ in $\mk[G/K]$. By
  Corollary \ref{cor:sezioniY} $ii)$, the restriction map
  $\grG(Y,\calL)\lra\grG(Y_S,\calL\ristretto_{Y_S})$ is surjective for
  any spherical line bundle generated by global sections. Thus $U_S$
  equals the image of $\grG(Y_S,\calL\ristretto_{Y_S})$ in $\mk[S_K]$
  and this implies our claim.

  Assume now that the base field $\mk$ is of arbitrary characteristic.
  The description of $\grG(Y_S,\calL\ristretto_{Y_S})$ as a $S$ module
  does not depend on the characteristic. It follows that there is a
  basis of $\grG(Y_S,\calL\ristretto_{Y_S})$ on which $\tW$ acts by
  permutations. Thus also the description of
  $\grG(Y_S,\calL\ristretto_{Y_S})^\tW$ and hence its dimension $d$
  does not depend on the characteristic. On the other hand by
  \ref{lem:piattezza}\, ii) we have that $d\leq \text{dim\ }
  \grG(Y_\mk,\calL_\mk)^K$.  Since $\grG(Y_\mk,\calL_\mk)^K$
  injects into $ \Gamma(Y_S,\calL\ristretto_{Y_S})^\tW$ everything
  follows.
\end{proof}

If we now set $$A_\calL:=\oplus_n\grG(Y,\calL^n)\hskip0.5cm
\text{and}\hskip0.5cm
B_\calL:=\oplus_n\grG(Y_S,\calL^n\ristretto_{Y_S})$$ we deduce from
the above Lemma that $\Proj(A^K_\calL)= \Proj(B^{\tW}_\calL)$ for any
spherical line bundle $\calL$ generated by global sections on a smooth
toroidal projective embedding $Y$ of $G/H$. In particular Theorem
\ref{teo:GIT} follows for such a compactification.

\subsection{Proof of Theorem \ref{teo:GIT}}\label{ssec:proofteoGIT}
We now prove Theorem \ref{teo:GIT} for any projective embedding $Y$ of
$G/H$. Consider an equivariant resolution $\tilde Y$ of the closure of
the image of $G/H$ in $Y\times X$. By construction this is a toroidal
compactification and we have a $G$ equivariant birational projective
morphism $\phi:\tilde Y \lra Y$. Clearly $\phi(\tilde Y_S) = Y_S$.

As already noticed at the beginning of section \ref{ssec:GITtoroidale}
we can assume $\calL$ to be $K$ spherical.  Let $\calM =\phi^*(\calL)$
and notice that this is a $K$ spherical line bundle on $\tilde Y$
generated by global sections. Notice also that since $Y$ is normal we
have $\grG(\tilde Y, \calM) = \grG(Y,\calL)$ and $A_\calM = A_\calL$.
We have the following commutative diagram
where the horizontal map are given by pull back of sections, and the
vertical maps are given by the restriction of sections:
$$
  \begin{CD}
    \Gamma(Y,\calL)^K @>{\simeq}>> \Gamma(\tilde Y,\calM)^K \\
    @VVV @VVV \\
    \Gamma(Y_S,\calL\ristretto_{Y_S})^\tW @>>> \Gamma(\tilde
    Y_S,\calM\ristretto_{\tilde Y_S})^\tW
  \end{CD}
$$ 
Now the vertical map on the right is an isomorphism by the result
obtained for a smooth toroidal compactification and the bottom map is
injective, since $\tilde Y_S \lra Y_S$ is surjective. So also the
vertical map on the left is an isomorphism. So $B_\calL^\tW \isocan
A_\calL^K$ and
$$
K\lGIT_\calL Y = \Proj(A^K_\calL) \isocan \Proj(B^{\tW}_\calL) = \tW\backslash Y_S
$$
as claimed. \hfill $\Box$

\section{Semistable points}
In this section we want to give a more geometric description of the
set of semistable points. We analyze first the case of flag varieties.

\subsection{Divisors of invariants in flag varieties}\label{ssec:ssGP}
We give first some definitions. If $I\subset \tDe$ we set $\Delta_I =
\Delta_0 \cup \{\gra \in \grD_1 \st \tal \in I\}$ and define $\Phi_I$
to be the subroot system of $\Phi$ spanned by $\Delta_I$. We let $P_I$
denote the corresponding parabolic subgroup of $G$ and $\grL_I =
\grL_{P_I}$ the set of characters of $P_I$.  We also set $\Pi_I =
\Pi\cap \Lambda_I$, $\Omega_{I,H} = \Omega_H \cap \Lambda_I$ and
$\Omega_I = \Omega_{I,\Hz}$.  We have
$$
\Omega_I = \bigoplus_{\tal\in \tDe \senza I}
\mZ\,\tom_\tal \qquad \Pi_I= \Omega_I + \bigoplus _{\gra \in \grD_e
  \senza \grD_I} \mZ\,\omega_\gra.
$$ 

Let us describe the set of invariant and semiinvariant sections with
respect to the action of $H$ of a line bundle $\calL_\lambda$ on
$G/P_I$.  Since if $\grl \in \grL_I^+$ we have that
$\grG(G/P_I,\calL_\grl) \isocan V_\grl^*$ (and is zero if $\grl$ is
not dominant). In this case we can apply directly the result of Vust
without introducing any further filtration.

If $\tal \in \tilde\Delta \senza I$ let $\bar p_\tal$ be a non zero
section of $\calL_{\tom_\tal}$ on $G/P_I$ invariant under the action
of $\Hz$.  Similarly if $\gra \in \grD_e \senza I$ let $\bar q_\gra$
be a non zero semiinvariant section of $\calL_{\om_\gra}$ on $G/P_I$.
If $\grl = \sum a_\tal \tom_\tal$ is dominant so that $a_\tal\geq 0$
for all $\tal$, we define $\bar p ^\grl = \prod_\tal \bar
p_\tal^{a_\tal}$ and similarly if $\grl = \sum_{\gra\in\grD_e} c_\gra
\om_\gra + \sum_{\tal\in \tDe_{ne}} c_\tal \tom_\tal$ is dominant in $
\Pi_I$ we define $\bar q^\grl=\prod_{\gra\in \grD_e} \bar
q_\gra^{c_\gra} \,\cdot \prod_{\tal \in \tDe_{ne}}\bar
p_\tal^{c_\tal}$.

We notice that up to a non zero scalar, $\bar p^\grl$ is the unique
$\Hz$ invariant section of $\calL_\grl$ and that it is $H$ invariant
if and only $\grl \in \Omega_H$.  Similarly $\bar q^\grl$ is the
unique $\Had$ semiinvariant section of $\calL_\grl$.

In particular if we set $R(G/P_I)=\bigoplus _{\grl \in \grL_I}
\Gamma(G/P_I,\calL_\grl)$, we have that the ring of invariants
$R(G/P_I)^\Hz$ is a polynomial ring in $\bar p_\tal$ for $\tal \notin
I$ and the ring of semiinvariants $R(G/P_I)^\Had_{si}$ is a polynomial
ring in $\bar p_\tal, \bar q_\beta$ for $\tal \in \tDe_{ne}\senza I$
and $\beta \in \Delta_e \senza \Delta_I$.

If $\grl=\sum c_\tal \tom_\tal \in \Omega$ we define the support of
$\grl$ as $\ssupp \grl = \{\tal\st c_\tal\neq 0 \}$. Also if $J\subset
\tDe \senza I$ we define $\bar p_J = \prod _{\tal \in J}\bar p_\tal$.

\begin{prp}\label{prp:ssGP}
  Let $J\subset \tDe \senza I$ then the equation $\bar p_J=0$ is
  reduced.

  Furthermore the divisor of the section $\bar p_{I^c}$ is the
  complement of the unique open $\Hz$ orbit in $G/P_I$, and we have
  $\Hz P_I = H P_I = \Had P_I$.
\end{prp}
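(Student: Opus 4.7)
We prove the three assertions in the order (3), (2), (1).

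Assertion (3) is formal. The projection $G/B\lra G/P_I$ sends the open $\Hz$-orbit in $G/B$ (which exists by sphericity) to an open, hence unique, $\Hz$-orbit $U=\Hz\cdot x_0$, where $x_0=eP_I$. Because the reduced subgroup underlying $\Had$ is $N_G(\Hz)$, the group $\Had$ normalizes $\Hz$ set-theoretically and therefore permutes the $\Hz$-orbits on $G/P_I$, necessarily fixing the unique open one. Thus $\Had\cdot x_0\subseteq U$, and combined with $\Hz\subseteq H\subseteq \Had$ this gives $\Hz P_I=H P_I=\Had P_I$.

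For assertion (2), one inclusion is straightforward: each $\bar p_\tal$ is a nonzero $\Hz$-invariant section, so its non-vanishing locus is $\Hz$-stable and open, hence cannot be disjoint from the dense orbit $U$; thus $\bar p_\tal(x_0)\neq 0$ for all $\tal\in I^c$, and $U\subseteq\{\bar p_{I^c}\neq 0\}$. For the reverse inclusion we must show that every point outside $U$ lies in some $D_\tal:=\{\bar p_\tal=0\}$. We proceed by classifying the irreducible $\Hz$-stable closed divisors: for each such $E\subset G/P_I$, the canonical section $s_E$ of $\calO(E)$ spans an $\Hz$-eigenline in some $\grG(G/P_I,\calL_\mu)=V_\mu^*$ with $\mu\in\grL_I^+$. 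Sphericity of $\Hz$ ensures that each $\Hz$-character appears at most once among the $\Hz$-semi-invariant weights of $V_\mu^*$; combining Vust's theorem \ref{teo:Vust} with the polynomial description of the ring of $\Hz$-invariants on $R(G/P_I)$ (the analogue of Proposition \ref{prp:invX} for $G/P_I$), we can identify $s_E$, up to an invertible multiple, with a product $\bar p^\mu=\prod_\tal \bar p_\tal^{a_\tal}$. Irreducibility of $E$ then forces $\mu=\tom_\tal$ for a single $\tal\in I^c$, so $E=D_\tal$. Combined with the fact that the boundary of $U$ has pure codimension one in $G/P_I$ (a general feature for open orbits of spherical subgroups on partial flag varieties, obtainable from local structure), we conclude $G/P_I\senza U=\bigcup_{\tal\in I^c}D_\tal$.

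Assertion (1) then follows at once: the above identifies the $D_\tal$ for $\tal\in J\subseteq I^c$ with pairwise distinct prime divisors, each of multiplicity one in the divisor of $\bar p_\tal$, so $\{\bar p_J=0\}=\bigcup_{\tal\in J}D_\tal$ is a reduced union.

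\textbf{Main obstacle.} The critical step is the classification of $\Hz$-stable prime divisors via the sections $\bar p_\tal$. Two subtleties must be handled: first, the presence of $\Hz$-semi-invariants with nontrivial characters, which requires verifying that after twisting by a character these still reduce to products of the $\bar p_\tal$'s rather than producing exotic divisors; second, the purity of codimension of $G/P_I\senza U$, which is not immediate from sphericity alone and is usually confirmed via a local structure argument for spherical actions on flag varieties.
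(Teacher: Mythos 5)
Your reduction of assertions (1) and (2) to the classification ``every irreducible $\Hz$-stable divisor $E\subset G/P_I$ is some $D_\tal$'' is where the argument breaks, and the obstacle you flag at the end is not a verification that can be carried out: it genuinely fails whenever exceptional simple roots are present. If $\tal\in\Ic$ is exceptional, with $\gra\in\grD_e$ and $\grb=\bar\grs(\gra)$ the associated simple roots, then $\tom_\tal=\om_\gra+\om_\grb$ and the product $\bar q_\gra\bar q_\grb$ of the two $\Had$-semi-invariants is an $\Hz$-\emph{invariant} section of $\calL_{\tom_\tal}$ on $G/P_I$ (the two nontrivial $\Hz$-characters cancel because $\tom_\tal\in\Omega=\ker\chi_\Hz$), hence is proportional to $\bar p_\tal$ by multiplicity one. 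So $D_\tal=\{\bar q_\gra=0\}\cup\{\bar q_\grb=0\}$ is \emph{not} irreducible, and the components of $\{\bar q_\gra=0\}$ are irreducible $\Hz$-stable divisors whose canonical sections carry a nontrivial $\Hz$-character and are not monomials in the $\bar p_\tal$: these are precisely the ``exotic divisors'' you hoped to exclude, and Vust's theorem together with the ring of $\Hz$-\emph{invariants} cannot see them. Your deduction of assertion (1) collapses with this, since the multiplicity-one claim you invoke is exactly the content of the identification that fails. The paper's way around it is to work with the ring of $\Had$-semi-invariants $R(G/P_I)^{\Had}_{si}$, a polynomial ring on the $\bar p_\tal$ for $\tal\in\tDe_{ne}\senza I$ and the $\bar q_\gra$ for $\gra\in\grD_e\senza\grD_I$, together with the quasi-spherical lattice $\Pi_I$: writing $\grf$ for the canonical section of the reduced divisor underlying $\{\bar p_J=0\}$, one has $\grf\mid\bar p_J$ and $\bar p_J\mid\grf^n$, the divisor is $\Had$-stable so $\grf$ is an $\Had$-eigenvector, and the two-sided divisibility forces all coefficients of its class in the basis $\{\tom_\tal\}\cup\{\om_\gra\}$ to equal $1$ on $J$ and $0$ off $J$; this gives reducedness directly.

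The second gap is the purity in codimension one of $G/P_I\senza U$, which you defer to an unspecified ``local structure argument.'' The paper closes this cheaply: $\Hz\cap P_I=\Hz\cap L_I$ has reductive Lie algebra $\Lie(L_I^\grs)$, so the open orbit $U$ is affine and its complement has pure codimension one by \cite{Hart2}, Proposition 3.1. The canonical section of that complement is then an $\Had$-eigenvector divisible by $\bar p_{\Ic}$ --- this step \emph{uses} the already-established reducedness of $\bar p_{\Ic}=0$, so the paper's order (1) then (2) is not cosmetic and your reversed order would be circular --- and the semi-invariant polynomial ring forces it to equal $\bar p_{\Ic}$. Your assertion (3) is correct and is essentially the paper's argument.
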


\begin{proof}
  We start with the first assertion.  Let $D_J$ denote the divisor of
  $\bar p_J=0$ with reduced structure. Take $\grl \in \grL_I^+$ with
  the property that $\calL_\grl\isocan \calO(D_J)$ and let $\grf \in
  \grG(G/P_I,\calL_\grl)$ such that $\divi \grf = D_J$. We claim that
  $\grl=\tom_J:=\sum_{\tilde \alpha\in J}\tom_\tal$.
  
  Notice that by definition $\grf$ divides $\bar p_J$ and for big
  enough $n$ the section $\bar p_J$ divides $\grf^n$. So
  $\tom_J=\grl+\mu$ and $n\grl=\tom_J +\nu$ with $\mu$ and $\nu$
  dominant.  Moreover $D_J$ is an $\Had$ invariant so $\grf$ is an
  eigenvector under the action of $\Had$. In particular $\grl$ and
  also $\mu$ and $\nu$ are quasi spherical. Recall that $\tDe_{ne}
  =\{\tal \in \grD_1 : \gra $ is not exceptional$\}$. We can write
  $$
  \grl = \sum_{\tal \in \tDe_{ne}\senza I} c_\tal \tom_\tal
  +\sum_{\gra \in \grD_e : \tal \notin I}
  c_\gra \omega_\gra.
  $$
  Since $\grf$ divides $\bar p_J$ we obtain $c_\tal, c_\gra \leq 1$
  and $c_\tal= c_\gra =0$ for $\tal \notin J$. Alsosince $\bar p_J$
  divides $\grf^n$ we obtain $c_\tal, c_\gra \geq 1$ for $\tal \in J$.
  So $c_\gra=c_\tal=1$ if $\tal \in J$ and $\grl=\tom_J$ as claimed.

  Now let $U=\Hz\cdot [P_I]$ denote the open $\Hz$ orbit in $G/P_I$
  and $U'$ the complement of the divisor $D_{I^c}$.  Since $U'$ is
  $\Hz$ stable $U\subset U'$.
  
  We  claim that $U$ is affine. To see this is enough to prove that
  the Lie algebra of $\Hz\cap P_I $ is reductive.
  Indeed we have that this Lie algebra is equal to the Lie algebra of
  $L_I^\grs$ where $L_I$ is the Levi factor of $P_I$ containing $T$.
  
  Since $U$ is affine, by \cite{Hart2} Proposition 3.1 pg.~66, $D=G/P_I \senza U$ has pure 
  codimension one. Let
  $\calL_\grl \isocan \calO(D)$ and let $\grf \in
  \grG(G/P_I,\calL_\grl)$ be such that $\divi \grf =D$.
  
  Since $U\subset U'$ and $\bar p_{I^c}=0$ is reduced we have that
  $\bar p_{I^c}$ divides $\grf$. So $\calL_\grl$ is ample. Moreover
  the section $\grf$ must be an eigenvector under the action of $\Had$
  so $\grl$ is quasi spherical. This together with the fact that
  $\grl\in \grL_I$ easily implies that, up to a non zero constant,
  $$\grf= \prod_{\tal \in {I^c}\cap \tDe_{ne}} \bar p_\tal^{c_\tal} \quad\cdot
  \prod_{\gra \in \grD_e \st \tal \in {I^c}} \bar q_\gra^{c_\gra}$$
  with the exponents $c_\tal$ and $c_\gra$ positive.  On the other
  hand since $\grf$ is reduced we must have $c_\tal=c_\gra \leq 1$ for
  all $\tal, \gra$. So $\grl=\tom_{I^c}$ and $\grf=\bar p_{I^c}$ as
  claimed.

  Finally since the complement of the section $\bar p_J$  is
  stable by the action of $H$, or $\Had$ and  is a single
  $\Hz$ orbit, it is also a single $H$ or $\Had$ orbit.
\end{proof}

\subsection{Semistable points in a smooth toroidal compactification}
In this section we prove that the set of semistable points in a smooth
toroidal compactification does not depend on the choice of an ample
line bundle. 

We need to make few remarks on weights and convex functions.  We start
with a simple and well known Lemma on root systems.

\begin{lem}\label{lem:ssXradiciepesi}
  Let $\{\alpha_1,\ldots ,\alpha_r\}$ be a set of simple roots in a
  root system $R$, and $\{\omega_1,\dots ,\omega_r\}$ the
  corresponding set of fundamental weights. If $K\subset \{1,\ldots
  ,r\}$ every $\omega_j$ can be expressed as
  $$\omega_j=\sum_{h\in K}a_h\alpha_h+\sum_{k\notin K}b_k\omega_k$$
  with $a_h,b_k$  non negative rational numbers.
  
  Furthermore if $K=\Delta$, and $R$ is irreducible the $a_h$'s are
  strictly positive.
  \end{lem}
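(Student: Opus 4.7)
The plan is to split $\omega_j$ as $\omega_j = V+W$ where $V$ is a non-negative combination of $\{\alpha_h\}_{h\in K}$ and $W$ is killed by the coroots $\alpha_i^\vee$ for $i\in K$; then writing $W$ in the basis of fundamental weights automatically yields $W=\sum_{k\notin K}b_k\omega_k$, and positivity will follow from the classical fact that inside an irreducible root system every fundamental weight is a non-negative combination of simple roots.

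Concretely, let $C_{ij}=\langle \alpha_i,\alpha_j^\vee\rangle$ denote the Cartan matrix of $R$, and let $C_K=(C_{ij})_{i,j\in K}$ be the submatrix indexed by $K$; this is the Cartan matrix of the sub-root system $R_K\subset R$ generated by $\{\alpha_h\}_{h\in K}$, hence is invertible. I would define rational numbers $a_h$ $(h\in K)$ as the unique solution of
$$\sum_{h\in K} a_h C_{hi}=\delta_{ji}\qquad\text{for all }i\in K,$$
and set $W:=\omega_j-\sum_{h\in K}a_h\alpha_h$. By construction $\langle W,\alpha_i^\vee\rangle=0$ for every $i\in K$. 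Expanding $W$ in the fundamental weight basis gives $W=\sum_{k\notin K}b_k\omega_k$ with
$$b_k=\langle W,\alpha_k^\vee\rangle=\delta_{jk}-\sum_{h\in K}a_hC_{hk}.$$

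It remains to check positivity of the $a_h$ and $b_k$. If $j\notin K$ then the defining system forces $a_h=0$ for all $h\in K$, and $b_k=\delta_{jk}\geq 0$, so the statement is trivial. If $j\in K$, then $(a_h)_{h\in K}$ is by definition the vector of coefficients of the $j$-th fundamental weight of $R_K$ expressed in the simple-root basis of $R_K$; by the classical fact for irreducible root systems (applied to each irreducible component of $R_K$) these coefficients are non-negative. Finally, for $k\notin K$ with $j\in K$ we have $b_k=-\sum_{h\in K}a_h C_{hk}$; since $C_{hk}=\langle\alpha_h,\alpha_k^\vee\rangle\leq 0$ whenever $h\neq k$ and $a_h\geq 0$, we obtain $b_k\geq 0$.

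For the ``furthermore'' assertion, if $K=\Delta$ then the complement is empty and the $a_h$'s are exactly the coefficients of $\omega_j$ in the simple-root basis of $R$. When $R$ is irreducible, the classical result already invoked above sharpens to strict positivity of each coefficient, giving $a_h>0$. The only nontrivial input in the whole argument is this classical positivity of the inverse Cartan matrix within an irreducible component (see e.g.\ Bourbaki, Groupes et algèbres de Lie, Ch.~VI, §1.10), and I do not expect any genuine obstacle beyond invoking it.
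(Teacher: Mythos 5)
Your proof is correct, and its skeleton is the same as the paper's: both split $\omega_j$ into a piece lying in the span of $\{\alpha_h\}_{h\in K}$ (identified with a fundamental weight of the sub-root system $R_K$) and a piece orthogonal to the $\alpha_i^\vee$ for $i\in K$, and both get $b_k\geq 0$ from $b_k=-\sum_{h\in K}a_h\langle\alpha_h,\alpha_k^\vee\rangle$ together with $a_h\geq 0$ and the non-positivity of off-diagonal Cartan entries. The one genuine difference is how the non-negativity (and, in the irreducible full-support case, strict positivity) of the $a_h$ is obtained: you outsource it to the classical fact that the inverse Cartan matrix of an irreducible root system has strictly positive entries, applied componentwise to $R_K$, whereas the paper keeps the argument self-contained by inducting on the rank --- its $|K|=r$ case (extracting $a_{j,j}>0$ from $0<(\omega_j,\omega_j)=a_{j,j}(\alpha_j,\omega_j)$ and then eliminating $\alpha_j$ to reduce to $K=\Delta\setminus\{\alpha_j\}$) together with the final irreducibility argument is in effect a proof of exactly the classical fact you cite. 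Your route is shorter and cleaner at the cost of an external reference; the paper's buys self-containment. Both are fine; just make sure the citation you give actually covers the positivity statement (the inverse Cartan matrices are tabulated in the Plates of Bourbaki Ch.~VI, and the general positivity argument is standard), since that single input carries all the weight of your proof.
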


\begin{proof}
  If $r=1$ there is nothing to prove so we can proceed by induction.
  
  Assume $|K|<r$. The space $A$ and $B$ respectively spanned by the
  $\alpha_H$'s with $h\in K$ and by the $\omega_k$'s with $k\notin K$
  are mutually orthogonal. It follows that $\omega_j$ can be uniquely
  written as
 $$\omega_j=\gamma_j+\delta_j$$
 with $\gamma_j\in A$ and $\delta_j\in B$.
        
 If $j\notin K$ then $\gamma_j=0$ and there is nothing to prove.
  
 If $j\in K$, $\gamma_j$ is a fundamental weight for the root system
 in $A$ having the $\alpha_h$'s with  $h\in K$ as simple roots. Thus by
 induction $a_h\geq 0$ for each $h\in K$.  Write
 $\delta_j=\sum_{k\notin K}b_k\omega_k$. We get $b_k=\langle \delta_j,
 \gra_k\cech\rangle = - \langle \gamma_j, \gra_k\cech\rangle \geq 0$
 as desired.
  
 Assume now $|K|=r$.  Write $\omega_j=\sum_{h=1}^ra_{j,h}\alpha_h$ and
 notice that $0< ( \omega_j, \omega_j ) = a_{j,j} \,
 (\alpha_j,\omega_j)$. Since $(\alpha_j,\omega_j)>0$ we deduce that
 $a_{j,j} >0$. In particular we can write
  $$\alpha_j=\frac{\omega_j}{ a_{j,j}}+ \xi_j$$
  where $\xi_j$ is a linear combination of the elements in
  $\Delta\setminus \{\alpha_j\}$. It follows that
  $\omega_k=(a_{k,j}/a_{j,j})\omega_j+x$ with $x$ a linear combination
  of the elements in $\Delta\setminus \{\alpha_j\}$. Using the
  positivity of $a_{j,j}$ our claim now follows from the previous
  analysis applied in the case in which $K=\Delta\setminus
  \{\alpha_j\}$.

  It remains to show that in the irreducible case $a_{j,h}\neq 0$ i.e.
  $(\omega_j,\omega_h)\neq 0$ for each $j,h=1,\ldots r$. By
  contradiction assume that say $(\omega_1,\omega_2)= 0$. This means
  that $\omega_2$ lies in the span of $\alpha_2,\ldots \alpha_r$ and
  it is a fundamental weight for the root system having these roots as
  a set of simple roots. Let $R'$ be the irreducible component of this
  root system containing $\alpha_2$. We can assume that our ordering
  of simple roots is such that $\alpha_2,\ldots \alpha_s$ are a set of
  simple roots for $R'$.  By induction
  $\omega_2=\sum_{h=2}^sa_{2,h}\alpha_h$ with $a_{2,h}>0$. On the
  other hand
  $$0=(\omega_2,\alpha_1)=\sum_{h=2}^sa_{2,h}(\alpha_h,\alpha_1) $$ so
  that $(\alpha_h,\alpha_1)=0$ for each $h=1,\ldots ,s$. This clearly
  contradicts the irreducibility of $R$.
\end{proof}

Let $Y$ be a smooth toroidal compactification of $G/H$ and let $F_Y$
be the associated decomposition of the Weyl cochamber $C$.

We define the support $\ssupp \rho$ of a face $\rho$ of $F_Y$ as
$$\ssupp \rho:=\{\tal \in \tDe | \tal \text{\ is not identically zero  on\ } \rho\}.$$  

\begin{lem}\label{lem:torico1}
  Let $\bgrl= (\grl_\tau)_{\tau\in F_Y(\ell)} \in \SPic_0(Y)$ be such
  that $\calL_\bgrl$ is ample. Let $\rho$ be a
  face of $F_Y$. Then there exist a positive integer $n$ and $\mu \in
  \Omega^+_H$ such that $\ssupp \mu =\ssupp \rho$ and $\mu \in
  \calA(n\bgrl)$.
\end{lem}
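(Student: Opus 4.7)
The plan is to apply Lemma \ref{lem:ssXradiciepesi} on root systems to the restricted root system $\tPhi$ with simple roots $\tDe$, choosing the subset $K:=I=\tDe\setminus\ssupp\rho$. The key idea is to decompose any strictly dominant weight $\grl_\tau$ as a nonnegative combination of the simple roots $\tal\in I$ plus a strictly positive combination of the fundamental weights $\tom_\tal$ with $\tal\in J:=\ssupp\rho$; since all simple restricted roots are nonnegative on $C$, dropping the root piece produces a weight of support exactly $J$ bounded above by $\grl_\tau$ on $C$.

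In detail, I would fix any $\tau\in F_Y(\ell)$. By the ampleness criterion in Corollary \ref{cor:sezioniY} iv), $\grl_\tau\in\Omega_H^{++}$, so writing $\grl_\tau=\sum_{\tbe\in\tDe}c_\tbe\tom_\tbe$ in the $\mZ$-basis $\{\tom_\tbe\}$ of $\Omega$, all $c_\tbe$ are strictly positive integers. Applying Lemma \ref{lem:ssXradiciepesi} with $K=I$ to each $\tom_\tbe$ with $\tbe\in I$ (and using the tautology $\tom_\tbe=\tom_\tbe$ for $\tbe\in J$), after collecting terms I would obtain
\[
\grl_\tau=\sum_{\tal\in I}A_\tal\,\tal+\sum_{\tal\in J}B_\tal\,\tom_\tal
\]
with $A_\tal\in\mQ_{\geq 0}$ and $B_\tal\in\mQ_{>0}$; the strict positivity of the $B_\tal$ for $\tal\in J$ comes from the untouched contribution $c_\tal\tom_\tal$ to $\grl_\tau$.

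Next, since every simple restricted root is $\geq 0$ on $C$, this identity yields $\sum_{\tal\in J}B_\tal\tom_\tal\leq\grl_\tau$ on $C$. The ampleness criterion also gives $\grl_\tau<\grl_{\tau'}$ on $\tau'\setminus\tau$ for every top-dimensional $\tau'$, hence $\grl_\tau\leq\bgrl$ globally on $C$. Chaining the inequalities,
\[
\sum_{\tal\in J}B_\tal\tom_\tal\leq\bgrl\quad\text{on }C.
\]
Finally, I would pick a positive integer $n$ such that $nB_\tal\in\mZ$ for every $\tal\in J$ and such that $\mu:=n\sum_{\tal\in J}B_\tal\tom_\tal\in\Omega_H$; this is possible because $\Omega_H\subset\Omega$ is a finite-index sublattice, both being lattices of rank $\ell$. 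Then $\mu\in\Omega_H^+$, $\ssupp\mu=J=\ssupp\rho$, and $\mu\leq n\bgrl$ on $C$, so $\mu\in\calA(n\bgrl)$, as required.

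I do not anticipate a serious obstacle: once one sees that Lemma \ref{lem:ssXradiciepesi} yields precisely the decomposition needed to separate the support, the rest reduces to the convexity inequality $\grl_\tau\leq\bgrl$ (automatic from the ampleness criterion) and to a routine clearing of denominators followed by a finite-index step between $\Omega_H$ and $\Omega$.
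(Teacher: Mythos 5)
Your proof is correct and follows essentially the same route as the paper's: apply Lemma \ref{lem:ssXradiciepesi} with $K=\tDe\setminus\ssupp\rho$ to the regular dominant weight $\grl_\tau$, discard the root part using that the restricted simple roots are nonnegative on $C$, chain with $\grl_\tau\leq\bgrl$ from the ampleness criterion of Corollary \ref{cor:sezioniY}, and clear denominators. The one small divergence is that the paper takes $\tau$ to be a maximal face \emph{containing} $\rho$, which yields the additional property $\mu=n\bgrl$ on $\rho$ --- not required by the statement but invoked later in the proof of Proposition \ref{prp:Yss} --- whereas with your arbitrary choice of $\tau$ this extra property can fail.
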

\begin{proof} Let $S=\ssupp \rho $ and $T= \tDe\setminus \ssupp \rho$.
  Let $\tau$ be a maximal dimensional face containing $\rho$.  Since
  $\calL_\bgrl$ is ample, Corollary \ref{cor:sezioniY} $iv)$ implies
  that $\grl_\tau$ is regular dominant. So by Lemma
  \ref{lem:ssXradiciepesi} there exists a positive integer $n$ such
  that we can write $n\grl_\tau$ as
$$
n\grl_\tau = \sum_{\tal \in S} a_\tal \tom_\tal + \sum_{\tal \in T}
b_\tal \tal
$$
with   $a_\tal$ positive integers and $b_\tal$ non negative
integers.  Set $\mu = \sum_{\tal \in S} a_\tal \tom_\tal$.  We have
$\mu = n\grl_\tau$ on $\rho$ and $\mu \leq n \grl_\tau \leq n\bgrl$ on the
Weyl cochamber $C$ again by Corollary \ref{cor:sezioniY} $iv)$.
\end{proof}

The following Lemma is a sort of converse of Lemma \ref{lem:torico1}.

\begin{lem}\label{lem:torico2} 
  Let $\bgrl= (\grl_\tau)_{\tau\in F_Y(\ell)} \in \SPic_0$ be such
  that $\calL_\bgrl$ is ample. Let $\mu\in \Omega^+$ and $n$ a
  positive integer with $\mu\in \calA(n\bgrl)$ and $\mu = n\bgrl$ on
  $\rho$. Then $\ssupp \mu \supset \ssupp \rho $.
\end{lem}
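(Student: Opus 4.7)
The plan is to argue by contradiction: I assume there is some $\tal \in \ssupp \rho$ with coefficient $c_\tal := \langle \mu, \tal^\vee \rangle = 0$ and aim to find a ray of the fan at which the inequality $\mu \leq n\bgrl$ must fail, contradicting $\mu \in \calA(n\bgrl)$.

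First I would fix a maximal cone $\tau \in F_Y(\ell)$ containing $\rho$, so that $\bgrl = \grl_\tau$ on $\tau$ and $\grl_\tau$ is regular dominant by Corollary \ref{cor:sezioniY}(iv). Then, following the proof of Lemma \ref{lem:torico1}, I would apply Lemma \ref{lem:ssXradiciepesi} with $K = T := \tDe \setminus \ssupp\rho$ to each fundamental weight appearing in $\grl_\tau$ to get a decomposition $n\grl_\tau = \sum_{\tbe \in S} A_\tbe \tom_\tbe + \sum_{\tbe \in T} B_\tbe \tbe$ with $A_\tbe > 0$ for $\tbe \in S := \ssupp\rho$ and $B_\tbe \geq 0$ for $\tbe \in T$. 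Setting $\mu^* := \sum_{\tbe \in S} A_\tbe \tom_\tbe$ yields the element produced by Lemma \ref{lem:torico1}: $\mu^* \in \calA(n\bgrl)$, $\ssupp \mu^* = S$, and $\mu^* = n\bgrl$ on $\rho$ (since the extra piece $\sum_T B_\tbe \tbe$ vanishes on $\rho$).

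Next, I would study the auxiliary weight $\xi := \mu - \mu^*$. Since $\mu$ and $\mu^*$ both agree with $n\bgrl$ on $\rho$, $\xi$ lies in $\rho^\perp$, and pairing with $\tal^\vee$ gives $\langle \xi, \tal^\vee \rangle = c_\tal - A_\tal = -A_\tal < 0$. Smoothness of $Y$ makes $\tau$ simplicial, spanned by a basis $v_{D_1},\dots,v_{D_\ell}$ of $\grL^\vee_{S_H}$ with $\rho$ generated by $\{v_{D_i} : i \in I\}$. Writing $\tal^\vee = \sum_k \gamma_k v_{D_k}$ and using $\xi(v_{D_i}) = 0$ for $i \in I$, I obtain the single identity $-A_\tal = \sum_{k \notin I} \gamma_k\, \xi(v_{D_k})$, whose right-hand side I must show is incompatible with the bounds on $\xi(v_{D_k})$.

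The bounds come from the constraint $\mu \leq n\bgrl$ at each ray of the fan: at $v_{D_k} \in \tau$ with $k \notin I$ one has $\xi(v_{D_k}) \leq \sum_{\tbe \in T} B_\tbe \tbe(v_{D_k})$, while at rays $v_D$ lying outside $\tau$ the strict convexity of $\bgrl$ (a consequence of ampleness) together with PL continuity of $\bgrl$ across common faces produces sharper estimates. Dominance of $\mu$ adds the lower bounds $\xi(v_{D_k}) \geq -\mu^*(v_{D_k})$. The expected main obstacle is the combinatorial bookkeeping controlling the signs of the $\gamma_k$ (dictated by how $\tal^\vee$ expands in the basis $v_{D_k}$) and selecting, for each $k \notin I$, a neighboring maximal cone $\tau'$ sharing the face $\rho$ with $\tau$ whose ampleness inequality supplies the tightest bound; one can reduce to the case where $\mu$ is a vertex of the face $F_\rho := \{\nu \in \calA(n\bgrl) : \nu = n\bgrl \text{ on } \rho\}$ and where $\mu = n\bgrl$ holds on no face strictly containing $\rho$ (otherwise induct on faces of $F_Y$ of larger dimension), and in this situation the Cartan-matrix sign rules together with the strict-convexity gap coming from ampleness force $\sum_{k \notin I} \gamma_k \xi(v_{D_k}) > -A_\tal$, the desired contradiction.
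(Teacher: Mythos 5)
There is a genuine gap: your argument stops exactly where the proof has to be done. After setting up $\xi=\mu-\mu^*$ and the identity $-A_{\tal}=\sum_{k\notin I}\gamma_k\,\xi(v_{D_k})$, the contradiction is only \emph{asserted} to follow from ``Cartan-matrix sign rules together with the strict-convexity gap''. But the $v_{D_k}$ are ray generators of an arbitrary smooth fan refining the Weyl cochamber $C$; they are not coroots, and the coefficients $\gamma_k$ in the expansion of $\tal\cech$ in this basis have no controlled signs in general, so no Cartan-matrix sign rule applies to them. Likewise the proposed reduction to ``$\mu$ a vertex of $F_\rho$'' with an induction on faces is not carried out, and it is not clear it terminates or even makes sense (the polytope $\{\nu\in\calA(n\bgrl)\st \nu=n\bgrl\text{ on }\rho\}$ need not have $\mu$ among its lattice vertices). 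As written, the proposal is a plan whose decisive step is missing, and the step that is missing is the entire content of the lemma.

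The paper's proof avoids all of this bookkeeping by a direct convexity argument. One first reduces to $\rho$ one-dimensional (since $\ssupp\rho=\bigcup_{\vartheta\in\rho(1)}\ssupp\vartheta$), picks $v\neq 0$ in $\rho$ and, for each maximal cone $\tau\supset\rho$, considers the reflected region $\tau^\rho=\{u\st v+t(v-u)\in\tau\text{ for some }t>0\}$. The two hypotheses $\mu\leq\bgrl$ on $C$ and $\mu=\bgrl$ on $\rho$ combine, by linearity in $t$, to give the reversed inequality $\mu\geq\grl_\tau$ on $\tau^\rho$. The union of the $\tau^\rho$ over all maximal $\tau\supset\rho$ is $\{u\st\langle\tbe,u\rangle\leq 0\ \forall\,\tbe\notin\ssupp\rho\}$, which contains every $\tal\in\ssupp\rho$ because distinct simple restricted roots pair non-positively; then ampleness (i.e.\ $\grl_\tau$ regular dominant, Corollary \ref{cor:sezioniY} $iv)$) gives $\langle\mu,\tal\rangle\geq\langle\grl_\tau,\tal\rangle>0$. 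Note the only place ampleness enters is this last strict inequality; no wall-crossing or strict-convexity estimates across adjacent maximal cones are needed. If you want to salvage your approach, you would have to replace the appeal to sign rules by something equivalent to this reflection trick, which is really the one idea of the proof.
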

\begin{proof}If $\rho$ is the zero face there is nothing to prove.
  Assume that $\rho$ has positive dimension. By eventually
  substituting $\bgrl$ with $n\bgrl$, let us also assume that $n=1$.

  Let $\rho(1)$ be the set of $1$ dimensional faces contained in
  $\rho$ and notice that $\ssupp \rho = \bigcup_{\vartheta \in
    \rho(1)} \ssupp \vartheta$.  So it is enough to prove the claim in
  the case of one dimensional faces.

Let $\rho$ be one dimensional  and  choose a non zero point
  $v $ in $\rho.$

  Take a face $\tau$ of maximal dimension containing $\rho$ and define
  $$
  \tau ^\rho = \{u \in \grL\cech_\mR \st v+t(v-u) \in \tau \text{ for
    some positive real number } t>0\}.
  $$

  Notice that $\mu \geq \grl_\tau$ on $\tau^\rho$. Indeed if $u \in
  \tau^\rho$ there is a positive $t$ such that $v+t(v-u) \in \tau$.
  Since $\mu \in \calA(\bgrl)$ we have $\mu(v+t(v-u))\leq \grl_\tau
  (v+t(v-u))$. But $\bgrl=\mu$ on $\rho$ so that
  $\mu(v)=\grl_\tau(v)$, so $\mu (u)\geq \grl_\tau(u)$.

  Since the support of $F_Y$ equals $C$ it is then clear that
$$
\bigcup_{\tau\in F_Y(\ell) \st \tau \supset \rho} \tau^\rho = \{u \in
\grL_\mR\cech \st \langle \tal, u\rangle \leq 0 \mforall \tal \notin
\ssupp \rho\}.
$$
Thus every $\tal \in \ssupp \rho$ lies in at least one of the sets $
\tau^\rho$. It follows that $$ \langle \mu , \tal \rangle \geq \langle
\grl_\tau , \tal \rangle >0
$$
since $\bgrl$ is ample. Thus $\tal \in\ssupp \mu$.\end{proof}

We can now characterize the set of semistable points with respect to
an ample line bundle $\calL$ on $Y$.

Consider the $G$ equivariant projection $\phi:Y\lra X$ onto the
wonderful compactification $X$. For each $\mu \in \Omega$ we pull back
the $\Hz$ invariant section $p^\mu$. This is an $\Hz$ invariant
section of $\grG(Y,\phi^*(\calL_\mu))$ and we denote it by the same
symbol $p^\mu$. For any subset $I\subset \tilde\Delta$ we set $p_I :=
\prod _{\tal \in I}p^{\tilde\omega_{\tilde\alpha}}$.  We also remark
that the condition of being semistable does not depend on the group
$K$ between $\Hz$ and $\Had$.  Indeed, by the description of invariant
sections, if $f$ is a $\Hz$ invariant section which does not vanish on
$x$ there exists an integer $n$ such that $f^n$ is invariant under
$\Had$.  In view of this we will speak of semistable points without
specifying the group $K$.

\begin{prp}
  \label{prp:Yss}Let $Y$ be a smooth projective toroidal embedding of
  $G/H$ and let $\calL$ be an ample line bundle on $Y$.  Let $\rho$ be
  a face of $F_Y$ and let $O_\rho$ be the corresponding $G$ orbit. A
  point $x\in O_\rho$ is $\calL$ semistable if and only if $p_{\ssupp
    \rho} (x)\neq 0$.
\end{prp}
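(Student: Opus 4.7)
The plan is to prove each direction by unpacking the semistability criterion. For the sufficient direction I will construct an explicit $\Hz$-invariant section of a power of $\calL$ non-vanishing at $x$; for the necessary direction I will analyze the general form of such sections using the description of global sections of line bundles on $Y$.

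Assume first $p_{\ssupp\rho}(x)\neq 0$. By Lemma \ref{lem:torico1} there exist a positive integer $n$ and $\mu\in\Omega_H^+$ with $\ssupp\mu=\ssupp\rho$ and $\mu\in\calA(n\bgrl)$; inspecting the proof of that lemma one can moreover arrange $\mu=n\bgrl$ on $\rho$ (indeed one sets $\mu=n\grl_\tau-\sum_{\tal\notin\ssupp\rho}b_\tal\tal$ for a maximal face $\tau\supset\rho$). Set $\sigma:=s^{n\bgrl-\bmu}\,\phi^*(p^\mu)\in\grG(Y,\calL^n)$; this is $\Hz$-invariant because the $s_D$ are $G$-invariant and $p^\mu$ is $\Hz$-invariant by Proposition \ref{prp:invX}. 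The divisor of $s^{n\bgrl-\bmu}$ equals $\sum_D a_D D$ with $a_D=(n\bgrl-\mu)(v_D)$, so $a_D=0$ whenever $v_D\in\rho$; hence $s^{n\bgrl-\bmu}$ does not vanish on $O_\rho$. Moreover $\phi^*(p^\mu)(x)=\prod_{\tal\in\ssupp\rho}p_\tal(x)^{a_\tal}$, which is nonzero by hypothesis, so $\sigma(x)\neq 0$ and $x$ is semistable.

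For the converse, let $f\in\grG(Y,\calL^n)^\Hz$ be nonzero at $x$. In characteristic zero, Theorem \ref{teo:sezioniY} provides the decomposition $\grG(Y,\calL^n)=\bigoplus_{\mu\in\calA(n\bgrl)}s^{n\bgrl-\bmu}V_\mu^*$, whose $\Hz$-invariants admit the basis $\{s^{n\bgrl-\bmu}\phi^*(p^\mu):\mu\in\calA(n\bgrl)\cap\Omega_H^+\}$ by Theorem \ref{teo:Vust} and Proposition \ref{prp:invX}. Expanding $f$ in this basis and evaluating at $x\in O_\rho$, every term with $\mu\neq n\bgrl$ on $\rho$ vanishes because its $s^{n\bgrl-\bmu}$ factor has a component of its divisor containing $O_\rho$; therefore some $\mu$ with $\mu=n\bgrl$ on $\rho$ and $\phi^*(p^\mu)(x)\neq 0$ must survive. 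By Lemma \ref{lem:torico2} such a $\mu$ satisfies $\ssupp\mu\supset\ssupp\rho$, and the non-vanishing of $\phi^*(p^\mu)(x)=\prod_{\tal\in\ssupp\mu} p_\tal(x)^{a_\tal}$ forces $p_\tal(x)\neq 0$ for each $\tal\in\ssupp\mu$, a fortiori for $\tal\in\ssupp\rho$, yielding $p_{\ssupp\rho}(x)\neq 0$.

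The main obstacle is extending the converse to positive characteristic, where Theorem \ref{teo:sezioniY} is not available. I would replace the direct sum with a filtration $F_{n\bgrl,\nu}$ on $\grG(Y,\calL^n)$ built in analogy with the wonderful compactification filtration of Section \ref{ssec:X} (by means of Frobenius splitting, as the Remark after Corollary \ref{cor:sezioniY} alludes to); its associated graded pieces are the $V_\mu^*$ and their $\Hz$-invariants are one-dimensional precisely when $\mu\in\Omega_H^+$, by Corollary \ref{cor:invV} together with Lemma \ref{lem:invsuZ}. Taking the leading term of an arbitrary $\Hz$-invariant $f$ then gives $f=c\,s^{n\bgrl-\bmu}\phi^*(p^\mu)+f'$ with $f'$ in a strictly lower piece, and iterating recovers the same basis as in characteristic zero. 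The fan-theoretic argument using Lemmas \ref{lem:torico1} and \ref{lem:torico2} then carries over verbatim.
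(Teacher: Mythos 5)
Your argument coincides with the paper's own proof in both directions: the forward implication via Lemma \ref{lem:torico1} and the explicit invariant $s^{n\bgrl-\bmu}p^{\mu}$ (nonzero at $x$ because $\mu=n\bgrl$ on $\rho$ kills the $s_D$-factors through $O_\rho$ and $\ssupp\mu=\ssupp\rho$ makes $p^\mu$ a product of the $p_{\tal}$ with $\tal\in\ssupp\rho$), and the converse via expanding an arbitrary invariant in the sections $s^{n\bgrl-\bmu}p^{\mu}$, discarding the terms with $\mu\neq n\bgrl$ on $\rho$, and applying Lemma \ref{lem:torico2}. The one place you genuinely diverge is the justification, in positive characteristic, that every $\Hz$-invariant of $\grG(Y,\calL^n)$ is a combination of the $s^{n\bgrl-\bmu}p^{\mu}$: you propose to construct a good filtration of $\grG(Y,\calL^n)$ by Frobenius splitting and argue by leading terms. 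That is only a plan, and carrying it out amounts to reproving Theorem \ref{teo:sezioniY} in characteristic $p$ --- exactly the work the remark after Corollary \ref{cor:sezioniY} declines to do. The paper instead simply quotes the description of invariants already obtained in the proof of Theorem \ref{glinvarianti}: by Lemma \ref{lem:piattezza} and Lemma \ref{lem:invsuZ} the dimension of $\grG(Y,\calL^n)^{K}$ is the same in every characteristic, and since the sections $s^{n\bgrl-\bmu}p^{\mu}$ are defined over $\R$ and remain linearly independent (their restrictions to $Y_S$ are triangular with nonzero lowest-weight terms by Lemma \ref{lem:pesoalto}, as in the proof of Theorem \ref{teo:Rich}), they still span. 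So your characteristic-$p$ paragraph should be replaced by a citation of what the paper has already established; as written it is a sketch rather than a proof, though the fact it aims at is true and available. A last cosmetic point: $s^{n\bgrl-\bmu}\phi^*(p^{\mu})$ does not lie exactly in the summand $s^{n\bgrl-\bmu}V_\mu^*$ of Theorem \ref{teo:sezioniY}, but the family is triangular with respect to $\leq_\grs$ and hence still a basis of the invariants; this gloss is shared with the paper and does not affect the argument.
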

\begin{proof}
  Let $\calL=\calL_\bgrl$.  Let $\rho$ be a face of $F_Y$ and $x\in
  O_\rho$ . Assume that $p_{\ssupp \rho}(x)\neq 0$. By Lemma
  \ref{lem:torico1} there exist a positive integer $n$ and a dominant
  weight $\mu$ such that $\mu \in \calA(n\bgrl)$, $\mu = n\bgrl$ on
  $\rho$ and $\ssupp \mu = \ssupp \rho$. Since $p^\mu$ is a product of
  the sections $p_{\tal}$ for $\tal \in \ssupp \rho$ we have that
  $p^\mu(x)\neq 0$.  Thus $s^{n\bgrl - \bmu}p^\mu$ is an $H$ invariant
  section of $\calL_\bgrl^n$ not vanishing on $x$ and $x$ is
  semistable.

  Conversely let $x$ be semistable. Then by the description of
  invariants given in the proof of Theorem \ref{glinvarianti} there
  exists a positive integer $n$ and a dominant spherical weight
  $\mu\in \calA(n\bgrl)$ such that $s^{n\bgrl-\bmu}p^\mu(x)\neq 0$.
  The condition $s^{n\bgrl-\bmu}(x)\neq 0$ implies $\mu = n\bgrl$ on
  $\rho$ so we can apply Lemma \ref{lem:torico2} and we deduce that
  $\ssupp \mu \supset \ssupp \rho$. In particular $p_\tal(x) \neq 0$
  for all $\tal \in \ssupp \rho$ or equivalently $p_{\ssupp
    \rho}(x)\neq 0$.
\end{proof}

 The above Proposition has the following
 
\begin{cor}\label{indipendence} Let $\calL$ and $\calL'$ be two ample line bundles on $Y$. Then
$Y^{ss}(\calL)= Y^{ss}(\calL')$.
\end{cor}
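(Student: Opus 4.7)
The plan is to observe that Proposition \ref{prp:Yss} already provides a description of the semistable locus that is manifestly independent of the chosen ample line bundle. First I would recall that by Theorem \ref{teo:toroidali} (iii) every point $x \in Y$ lies in a unique $G$-orbit, and the $G$-orbits are parametrized by the faces $\rho \in F_Y$, so there is a well-defined face $\rho(x)$ with $x \in O_{\rho(x)}$.

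Given this, Proposition \ref{prp:Yss} asserts that for \emph{any} ample line bundle $\calM$ on $Y$, a point $x$ is $\calM$-semistable if and only if $p_{\ssupp \rho(x)}(x) \neq 0$. The key point I would emphasize is that the right-hand side of this equivalence does not mention $\calM$ at all: the sections $p_\tal = \phi^*(p^{\tom_\tal})$ and their products $p_I$ were defined (from the $\Hz$-invariants of the Cox ring of $X$ pulled back by the equivariant map $\phi:Y\to X$) without any reference to the polarization, and the face $\rho(x)$ depends only on $x$. Thus
\[
Y^{ss}(\calL) \;=\; \bigl\{ x \in Y \,:\, p_{\ssupp \rho(x)}(x) \neq 0 \bigr\} \;=\; Y^{ss}(\calL'),
\]
which gives the claim. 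There is no real obstacle: the entire content has been absorbed into Proposition \ref{prp:Yss}, and the corollary is just the remark that the characterization furnished by that proposition makes no use of $\calL$. Hence the proof reduces to a single sentence invoking Proposition \ref{prp:Yss} twice.
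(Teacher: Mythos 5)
Your proof is correct and is exactly the paper's argument: the corollary is stated as an immediate consequence of Proposition \ref{prp:Yss}, whose semistability criterion $p_{\ssupp \rho}(x)\neq 0$ makes no reference to the ample line bundle. Your additional remark that every point lies in a unique orbit $O_\rho$, so the criterion applies to all of $Y$, is the only detail the paper leaves implicit, and you have supplied it correctly.
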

In view of this Corollary we shall from now on say that a point is
semistable if it is semistable with respect to any ample line bundle
and we shall denote the set of semistable points by $Y^{ss}$.
\medskip

 We now give a more set theoretic description of semistable points.
 Take a face $\rho$ of the fan $F_Y$ and denote by $O_\rho$ the
 corresponding $G$ orbit in $Y$.  Set $I = \ssupp \rho$ and consider
 the $G$ equivariant projection $\phi:Y\lra X$. Remark that for any
 $\eta$ in the relative interior of $\rho$ the point $y_\eta$ depends
 only on $\ssupp \rho$ and not on the choice of $\eta$. Thus we can
 denote this point by $y_\rho$.  By definition for such one parameter
 subgroup $\eta$ we have $\phi(y_\eta)=x_\eta$. In particular it
 follows that $\phi(O_\rho)= O_I$ the open orbit in $X_I$ and that the
 projection $\phi:O_\rho\to O_I$ is a $G$ equivariant fibration.

By \cite{DP1} we have a $G$ equivariant projection 
$\pi_I:O_I  \to G/P_\Ic$ with $\Ic=\tDe\setminus I$. Composing we get a fibration
$$
\gamma_I:=\pi_I\circ
\phi:
  O_\rho \to G/P_\Ic
$$
whose fiber over the point $\pi_I\circ\phi(y_\rho)$ we denote by $F_\rho$.

In view of Proposition  \ref{prp:ssGP} and Proposition \ref{prp:Yss} we immediately get

\begin{prp}\label{prp:OF}
  A point $x$ in $O_\rho$ is semistable if and only if its $H$ orbit
  intersects $F_\rho$. So we have that $O_\rho^{ss}:=Y^{ss}\cap O_\rho$ is equal to  $ \Hz F_\rho$ (and also to $ K  F_\rho$ and
 $\Had F_\rho$).
\end{prp}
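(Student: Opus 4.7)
The plan is to identify $O_\rho^{ss}$ with $\gamma_I^{-1}(\text{open }\Hz\text{-orbit in }G/P_{\Ic})$, where $I=\ssupp\rho$. By Proposition \ref{prp:Yss}, semistability of $x\in O_\rho$ is equivalent to $p_\tal(x)\neq 0$ for every $\tal\in I$; by Proposition \ref{prp:ssGP} applied with $\Ic$ in place of $I$, the complement of the open $\Hz$-orbit in $G/P_{\Ic}$ is exactly the divisor of $\bar p_I=\prod_{\tal\in I}\bar p_\tal$, and this open orbit coincides with the corresponding $H$- and $\Had$-orbits. So the task reduces to connecting the non-vanishing of $p_\tal$ on $O_\rho$ with the non-vanishing of $\bar p_\tal\circ\gamma_I$.

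The central technical step is to show that for $\tal\in I$ the sections $p_\tal\ristretto_{O_\rho}$ and $\gamma_I^*\bar p_\tal$ differ by a non-zero scalar. First observe that $\tom_\tal\in\Omega_{\Ic}$ for $\tal\in I$ (by the description of $\Omega_{\Ic}$ in Section \ref{ssec:ssGP}), so $\tom_\tal$ extends to a character of $P_{\Ic}$ and the line bundle $\calL_{\tom_\tal}$ on $G/P_{\Ic}$ is well defined; its pullback via $\pi_I$ is $G$-equivariantly isomorphic to $\calL_{\tom_\tal}^X\ristretto_{O_I}$, both being determined by the same character of the stabilizer of the base point $x_I$ of $O_I$. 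Both $p_\tal\ristretto_{O_I}$ and $\pi_I^*\bar p_\tal$ are then non-zero $\Hz$-invariant sections of this line bundle, and uniqueness of such invariants up to scalar (which follows from $\Hz$ having a dense orbit in $O_I$) forces the two to be proportional. Pulling back through $\phi$ gives $p_\tal(x)\neq 0\iff\bar p_\tal(\gamma_I(x))\neq 0$ for $x\in O_\rho$.

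Finally, a straightforward limit computation in $X_S^+\isocan\mA^\ell$ (where $\eta(t)\cdot(1,\dots,1)$ tends to the point with coordinates $0$ on $I$ and $1$ on $\Ic$) shows that $\phi(y_\rho)=x_I$ and hence $\gamma_I(y_\rho)=[eP_{\Ic}]$; so the open $\Hz$-orbit in $G/P_{\Ic}$ equals $\Hz\cdot\gamma_I(y_\rho)$. By $\Hz$-equivariance of $\gamma_I$,
$$
O_\rho^{ss}=\gamma_I^{-1}\bigl(\Hz\cdot\gamma_I(y_\rho)\bigr)=\Hz\cdot\gamma_I^{-1}\bigl(\gamma_I(y_\rho)\bigr)=\Hz F_\rho.
$$
The equalities $\Hz F_\rho=KF_\rho=HF_\rho=\Had F_\rho$ follow because the open orbit in $G/P_{\Ic}$ is unchanged when $\Hz$ is replaced by $K$, $H$ or $\Had$ (second assertion of Proposition \ref{prp:ssGP}), and the condition $Hx\cap F_\rho\neq\vuoto$ is equivalent to $\gamma_I(x)\in H\cdot\gamma_I(y_\rho)$, i.e.\ to semistability. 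The principal obstacle throughout is the line-bundle identification $\calL_{\tom_\tal}^X\ristretto_{O_I}\isocan\pi_I^*\calL_{\tom_\tal}^{G/P_{\Ic}}$; once this is in place, uniqueness of spherical invariants collapses everything else to Propositions \ref{prp:Yss} and \ref{prp:ssGP}.
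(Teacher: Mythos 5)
Your proof is correct and follows essentially the same route as the paper's, whose entire argument is the one-line observation that, by Proposition \ref{prp:ssGP}, the section $p_{\ssupp \rho}$ vanishes on $O_\rho$ exactly on the preimage under $\gamma_I$ of the complement of the open $\Hz$ orbit in $G/P_{\Ic}$. The only difference is that you make explicit the identification $p_\tal|_{O_I}\sim \pi_I^*\bar p_\tal$ (via the character of the stabilizer of $x_I$ and uniqueness up to scalar of $\Hz$-invariant sections on a variety with a dense $\Hz$ orbit), a step the paper treats as clear.
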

\begin{proof} This is clear since by Proposition \ref{prp:ssGP} the
  section $p_{\ssupp \rho} $ does not vanish exactly on the inverse
  image under $\gamma_I$ of the open $\Hz$ orbit in
  $G/P_\Ic$.\end{proof}

By Proposition \ref{prp:OF} we then have that $O_\rho^{ss}
=H\times_{H\cap P_\Ic}F_\rho$.

Let us now take a subset $J\subset\tDe$.  We denote by $L$ the
standard Levi factor of $P_J$ and recall that $L$ is $\sigma$ stable
and that if $U_{P_J}$ is the unipotent radical of $P_J$,
$\sigma(U_{P_J})=(U_{P_J})^-$ the opposite unipotent.

\begin{lem}\label{lem:UUL} For any subset $J\subset\tDe$ we have
    $$K \cap L =K \cap P_J.$$
\end{lem}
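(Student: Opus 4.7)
The inclusion $K\cap L\subset K\cap P_J$ is obvious since $L\subset P_J$, so the content is the reverse inclusion. My plan is to take $k\in K\cap P_J$, use the unique decomposition $k=lu$ with $l\in L$ and $u\in U_{P_J}$, and show $u=1$ by exploiting the fact that $\sigma(k)k^{-1}$ lies in the center.

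More precisely, since $K\subset \Had$ and $\Had$ is the preimage in $G$ of $\bar G^{\bar\sigma}$, we have $\sigma(k)k^{-1}\in Z$ for every $k\in K$. Set $c:=\sigma(k)k^{-1}\in Z$; then $c\in L$ because $Z$ is contained in every maximal torus of $G$, in particular in $T\subset L$. Writing $k=lu$, the relation $\sigma(k)=ck$ reads
$$\sigma(l)\,\sigma(u)=c\,l\,u.$$
Since $L$ is $\sigma$-stable we have $\sigma(l)\in L$, and since $\sigma(U_{P_J})=U_{P_J}^-$ we have $\sigma(u)\in U_{P_J}^-$. As $L$ normalizes $U_{P_J}^-$, we can rewrite the left-hand side as
$$\bigl(\sigma(l)\,\sigma(u)\,\sigma(l)^{-1}\bigr)\cdot\sigma(l),$$
which exhibits it as an element of $U_{P_J}^-\cdot L$. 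The right-hand side $cl\cdot u$ is an element of $L\cdot U_{P_J}$.

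The plan now is to invoke uniqueness of the Bruhat-type decomposition: the multiplication map
$$U_{P_J}^-\times L\times U_{P_J}\lra G$$
is an open immersion onto the big cell. Thus from
$$\bigl(\sigma(l)\,\sigma(u)\,\sigma(l)^{-1}\bigr)\cdot\sigma(l)\cdot 1 \;=\; 1\cdot(cl)\cdot u$$
we conclude $u=1$ and $\sigma(l)\sigma(u)\sigma(l)^{-1}=1$, so $k=l\in K\cap L$.

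I do not expect any serious obstacle: the argument is essentially the standard Levi decomposition trick combined with the $\sigma$-stability of $L$ and $\sigma(U_{P_J})=U_{P_J}^-$. The only mild subtlety is that $K$ need not be reduced, so the argument should be phrased at the level of $B$-points for an arbitrary $\mk$-algebra $B$; this causes no problem since the open immersion $U_{P_J}^-\times L\times U_{P_J}\hookrightarrow G$ is a scheme-theoretic statement and yields uniqueness of the decomposition functorially.
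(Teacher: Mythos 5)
Your proof is correct and follows essentially the same route as the paper's: both decompose $k=lu$ via the Levi decomposition of $P_J$ and use the $\sigma$-stability of $L$ together with $\sigma(U_{P_J})=U_{P_J}^-$ to force $u=1$. The only immaterial differences are that the paper concludes directly from $\sigma(u)^{-1}\in U_{P_J}^-\cap P_J=\{1\}$ rather than from uniqueness in the big cell, and that it handles non-reducedness by passing to the reduced subgroup $\Had_{red}=\{x\in G \st x\sigma(x)^{-1}\in Z(G)\}$ instead of arguing on $B$-points.
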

\begin{proof}
  It is enough to analyze the case of $K=\Had$. Since our problem is a
  problem of support we can work with the associated reduced subgroup
  $\Had_{red} = \{x \in G \st x\grs(x)^{-1} \in Z(G)\}$.
  
  Take $x = m \, u \in P_J \cap \Had_{red}$ with $m\in L$ and $u \in
  U_{P_J}$. Then clearly $u\grs(u)^{-1} \in L$. It follows that
  $\grs(u)^{-1}\in U^{-}_{P_J}\cap P_J=\{1\}$ thus $u=1$ and $x\in L$
  as desired.
\end{proof}
 
 Going back to the semistable points in the orbit $O_\rho\subset Y$, we    get

\begin{prp}\label{lariduzione} Let $L$ be the standard Levi factor of $P_\Ic$. Set $K_L=K\cap L$. We have a $K$ equivariant isomorphism
  $$O_\rho^{ss}\simeq K\times_{K_L}F_{\rho}.$$ 
  In particular this induces a closure preserving bijection between $K$ orbits in $O_\rho^{ss}$ and $K_L$ orbits in $F_\rho$.
\end{prp}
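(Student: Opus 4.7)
The strategy is to identify $O_\rho^{ss}$ as an associated fiber bundle over the unique open $K$-orbit in $G/P_\Ic$. After identifying $\pi_I\phi(y_\rho)$ with $[P_\Ic]\in G/P_\Ic$, the fiber $F_\rho$ becomes a $P_\Ic$-stable closed subvariety of $O_\rho$.

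First, since $\gamma_I:O_\rho\to G/P_\Ic$ is a $G$-equivariant morphism onto a homogeneous space, and the principal $P_\Ic$-bundle $G\to G/P_\Ic$ is Zariski-locally trivial (e.g.\ over the standard big cell), one obtains a canonical $G$-equivariant isomorphism $O_\rho\isocan G\times_{P_\Ic}F_\rho$. Now by combining Proposition \ref{prp:OF} with Proposition \ref{prp:ssGP}, the semistable locus equals $\gamma_I^{-1}(U)$, where $U\subset G/P_\Ic$ is the unique open $\Hz$-orbit and satisfies $U=\Hz\cdot[P_\Ic]=H\cdot[P_\Ic]=\Had\cdot[P_\Ic]=K\cdot[P_\Ic]$. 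Applying Lemma \ref{lem:UUL} with $J=\Ic$ gives $K\cap P_\Ic=K\cap L=K_L$, so the orbit map induces $U\isocan K/K_L$ as $K$-varieties. Restricting the above bundle description to $U$ then yields the desired $K$-equivariant isomorphism
$$
O_\rho^{ss}=\gamma_I^{-1}(U)\isocan K\times_{K_L}F_\rho.
$$

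For the orbit correspondence, I would exploit the closed $K_L$-equivariant immersion $\iota:F_\rho\hookrightarrow K\times_{K_L}F_\rho$, $f\mapsto[e,f]$, realising $F_\rho$ as the fiber over the identity coset. Every $K$-orbit meets $\iota(F_\rho)$ in a single $K_L$-orbit, since $K$ acts transitively on $K/K_L$ with stabilizer $K_L$ at $[e]$, giving the bijection. Closure preservation is formal: intersecting a closed $K$-invariant subset with the closed subvariety $F_\rho$ gives a closed $K_L$-invariant subset, and conversely the $K$-saturation of a closed $K_L$-invariant subset of $F_\rho$ is closed because the quotient map $K\times F_\rho\to K\times_{K_L}F_\rho$ is a principal $K_L$-bundle, hence open with $K_L$-saturated closed subsets descending to closed subsets.

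The only substantive step is the associated-bundle isomorphism $O_\rho\isocan G\times_{P_\Ic}F_\rho$ as schemes (not merely at the level of points); this is the point where one genuinely uses the Zariski local triviality of $G\to G/P_\Ic$, so that the argument works uniformly in all characteristics and also when $K$, $K_L$ are non-reduced group schemes. Everything else reduces to formal properties of extension of structure group.
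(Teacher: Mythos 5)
Your proposal is correct and follows essentially the same route as the paper, which deduces the statement directly from Proposition \ref{prp:OF} (giving $O_\rho^{ss}=KF_\rho=K\times_{K\cap P_\Ic}F_\rho$ via the fibration $\gamma_I$ over the open orbit of Proposition \ref{prp:ssGP}) together with Lemma \ref{lem:UUL} identifying $K\cap P_\Ic$ with $K_L$. Your additional care about Zariski local triviality of $G\to G/P_\Ic$ and the formal closure-preservation argument only makes explicit what the paper leaves to the reader.
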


Now consider the fiber $F_I$ of  $\pi_I$  containing $\phi(y_\rho)$. We know from \cite{DP1} that the solvable radical of $P_{I^c}$ acts trivially on $F_I$ and that 
$F_I=\overline L/\overline H_{\overline L}$ where $\overline L$ is the adjoint quotient of $L$ and $\overline H_{\overline L}$ is the subgroup fixed by the involution induced by $\sigma$. 

By the description of $Y$ given in Theorem \ref{teo:toroidali} it now follows that if we set $L_{\rho}$ equal to the quotient of $L$ modulo the subgroup in the center of $L$ generated by the one parameter subgroups $\eta$ with $\eta\in \rho$, $F_\rho$ can be identified with $L_\rho/H_\rho$, where $H_\rho$ is isogenous to the subgroup fixed by the involution on $L_\rho$ induced by $\sigma$. 

Under this identification $S_{H_\rho}$ coincides with $Y_S\cap O_\rho$. We thus can apply Remark \ref{ilridutt}  and  deduce
\begin{prp}\label{lechiusse}
 Let $x\in Y_S\cap O_\rho$. Then the orbit $Kx$ is closed in $O^{ss}_\rho$.
\end{prp}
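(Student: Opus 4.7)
The plan is to reduce the claim to an application of Proposition \ref{prp:closed}, in the reductive generality permitted by Remark \ref{ilridutt}, applied to the smaller symmetric variety $L_\rho/H_\rho$.

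First, by Proposition \ref{lariduzione}, the isomorphism $O_\rho^{ss}\simeq K\times_{K_L}F_\rho$ gives a closure-preserving bijection between $K$-orbits in $O_\rho^{ss}$ and $K_L$-orbits in $F_\rho$, where $K_L=K\cap L$ (which by Lemma \ref{lem:UUL} equals $K\cap P_{\Ic}$). Hence showing that $Kx$ is closed in $O_\rho^{ss}$ is equivalent to showing that $K_L\cdot x$ is closed in $F_\rho$.

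Second, under the identification $F_\rho\simeq L_\rho/H_\rho$ recalled in the paragraph preceding the proposition, the hypothesis $x\in Y_S\cap O_\rho$ translates precisely into the statement that $x$ corresponds to an element of $S_{H_\rho}$, since under this identification the paper has observed $S_{H_\rho}=Y_S\cap O_\rho$. The group $L_\rho$ is reductive, the involution $\grs$ descends to $L_\rho$, and $H_\rho$ is isogenous to its fixed-point subgroup. To apply the reductive version of Proposition \ref{prp:closed}, it is enough to verify that the image $\tilde K$ of $K_L$ under the central isogeny $L\to L_\rho$ lies between the connected identity component of the $\grs$-fixed subgroup of $L_\rho$ and its full normalizer analogue (the corresponding $\Hz_\rho$ and $\Had_\rho$). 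Starting from $\Hz\subseteq K\subseteq \Had$ and using that $L$ is $\grs$-stable together with the fact that $L\to L_\rho$ has central kernel, one checks that intersection with $L$ and passage to the quotient preserve this intermediate-subgroup property.

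Once this structural compatibility is established, Proposition \ref{prp:closed} applied to the reductive pair $(L_\rho,H_\rho)$, the subgroup $\tilde K$, and the element $x\in S_{H_\rho}$ yields that $\tilde K\cdot x$ is closed in $L_\rho/H_\rho$; translating through the identification $F_\rho\simeq L_\rho/H_\rho$ gives closedness of $K_L\cdot x$ in $F_\rho$, and then Proposition \ref{lariduzione} delivers closedness of $Kx$ in $O_\rho^{ss}$. The main obstacle is the verification in the previous paragraph: one must chase the action of $\grs$ carefully through the $\grs$-stable Levi decomposition of $P_{\Ic}$ and through the central isogeny $L\to L_\rho$ to confirm that the intermediate-subgroup condition is transported from $(\Hz,\Had)$ in $G$ to $(\Hz_\rho,\Had_\rho)$ in $L_\rho$. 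All other steps are formal consequences of the structural results already assembled.
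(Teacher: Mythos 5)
Your proposal is correct and follows essentially the same route as the paper: reduce via Proposition \ref{lariduzione} to $K_L$-orbits in $F_\rho\simeq L_\rho/H_\rho$, observe that $x$ corresponds to a point of $S_{H_\rho}=Y_S\cap O_\rho$, and invoke the reductive version of Proposition \ref{prp:closed} supplied by Remark \ref{ilridutt}. The only difference is that you make explicit the check that the image of $K_L$ in $L_\rho$ sits between $\Hz_\rho$ and $\Had_\rho$, a compatibility the paper leaves implicit.
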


\section{Closed semistable orbits}
In this section we prove that in the case of a smooth toroidal
compactification $Y$ of $G/H$ the orbits through the elements of $Y_S$
are the closed semistable orbits. In this way we give a geometric
counterpart to Theorem \ref{teo:GIT}.

We show first that the closure of $K$ orbits does not interacts with
$G$ orbits.

\begin{prp}\label{lem:Gorbiteechiusure}
  Let $Y$ be a smooth toroidal compactification of $G/H$, let $\calL$
  be an ample line bundle on $Y$. Let $x,y \in Y^{ss}(\calL)$. If $y
  \in \overline K x$ then $y \in G\cdot x$.
\end{prp}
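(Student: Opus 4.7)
Plan: My strategy combines the GIT identification $\Hz\lGIT Y \simeq \tW\backslash Y_S$ of Theorem \ref{teo:GIT} with the Richardson-type closedness in the fiber $F_\rho$ afforded by Proposition \ref{lariduzione}, Proposition \ref{prp:closed}, and Remark \ref{ilridutt}.

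First I would reduce to $K=\Hz$. Since $[K:\Hz]$ is finite, $Kx=\bigsqcup_{g\in K/\Hz}\Hz(gx)$, so $\overline{Kx}=\bigcup_g\overline{\Hz(gx)}$; if $y\in\overline{\Hz(gx)}$ then any conclusion $y\in G(gx)$ gives $y\in Gx$. Thus it suffices to prove the statement for $K=\Hz$.

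Next, writing $x\in O_\rho^{ss}$ and $y\in O_{\rho'}^{ss}$, the inclusion $y\in\overline{\Hz x}\subset\overline{Gx}=\overline{O_\rho}=\bigcup_{\rho\subset\tau}O_\tau$ immediately yields $\rho\subset\rho'$ as cones in $F_Y$. The goal is to upgrade this to $\rho=\rho'$. For this I turn to GIT: because $y\in\overline{\Hz x}\cap Y^{ss}$, the images satisfy $\pi(x)=\pi(y)$ in $\Hz\lGIT Y\simeq\tW\backslash Y_S$, and the fiber $\pi^{-1}(\pi(x))$ contains a unique closed $\Hz$-orbit $\Hz w$ lying in $\overline{\Hz x}\cap\overline{\Hz y}$. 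If $w\in O_\tau$, then $\tau\supset\rho$ and $\tau\supset\rho'$.

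The heart of the argument is to show $w$ can be chosen in $Y_S\cap O_\rho$, forcing $\tau=\rho$. For this I use Proposition \ref{lariduzione} to write $O_\tau^{ss}\simeq\Hz\times_{\Hz_L}F_\tau$ with $F_\tau=L_\tau/H_\tau$ a symmetric variety for a reductive group; the closed $\Hz$-orbit $\Hz w\subset O_\tau^{ss}$ corresponds to a closed $\Hz_L$-orbit $\Hz_L w_0$ in $F_\tau$. Invoking Richardson's criterion (Proposition \ref{prp:closed}, valid in the reductive setting by Remark \ref{ilridutt}), $\Hz_L w_0$ closed forces $w_0\in S_{H_\tau}=Y_S\cap O_\tau$. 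Thus $w\in Y_S\cap O_\tau$. Since $\tW$-orbits on $Y_S$ sit inside single $G$-orbits (as $\tW=N_\Hz(S)/Z_\Hz(S)$ acts through $G$), and the $\tW$-orbit of $w$ represents $\pi(x)$, by applying the same analysis symmetrically starting from $y$ (getting another representative $w'\in Y_S\cap O_{\rho'}$ of the unique closed orbit) and using uniqueness of the closed orbit in the fiber, one obtains $\Hz w=\Hz w'$ and hence $Gw=Gw'$, i.e.\ $O_\tau$ equals the orbit containing the representative for $y$; the incidences $\tau\supset\rho$, $\tau\supset\rho'$, $\rho\subset\rho'$ then collapse to $\rho=\rho'$, giving $y\in O_\rho=Gx$.

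The main obstacle is the rigorous identification of the unique closed $\Hz$-orbit in the fiber as lying in $Y_S\cap O_\rho$ rather than in a deeper $Y_S\cap O_\tau$ with $\tau\supsetneq\rho$. This amounts to analyzing Hilbert--Mumford-type 1-parameter subgroup limits inside $\Hz$—which, after conjugation, sit in a maximal torus of $\Hz$ contained in a $\grs$-stable maximal torus of $G$—and verifying that such limits, when applied to a semistable point of $O_\rho^{ss}$ and preserving semistability, cannot escape into deeper $G$-orbit strata; the closedness statement of Proposition \ref{lechiusse} is the essential ingredient controlling this within each stratum $O_\tau^{ss}$.
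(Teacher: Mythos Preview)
Your strategy is genuinely different from the paper's, and the underlying idea can be made to work, but the argument as you have written it contains a circularity that your final paragraph correctly flags without resolving.

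The paper's proof is entirely different and more direct. It observes that for large $n$ each $\calL_{n\bgrl+\bgra_D}$ is again ample, so (using Corollary~\ref{indipendence}) one can choose $K$-invariant sections $f$ and $f_D$ of suitable powers of $\calL_\bgrl$ and $\calL_{n\bgrl+\bgra_D}$ not vanishing at $y$. Their common nonvanishing locus $U$ is an affine saturated open set containing both $x$ and $y$, on which every $\calL_{m\bgra_D}$ acquires a $K$-equivariant trivialization; hence $s_D^m$ becomes a $K$-invariant \emph{function} on $U$. Since the $G$-orbit of a point is determined by which $s_D$ vanish there, and these functions are constant on $\overline{Kx}\cap U$, the conclusion is immediate. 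No appeal to Richardson in the fibers $F_\rho$ is needed.

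Your gap: you locate the unique closed orbit $\Hz w$ in the GIT fiber inside some $Y_S\cap O_\tau$ with $\tau\supset\rho'\supset\rho$, then claim that ``the same analysis starting from $y$'' yields a representative $w'\in Y_S\cap O_{\rho'}$ of that same closed orbit. But the unique closed orbit does not depend on whether you start from $x$ or $y$; you only ever obtain $w'\in O_\tau$. Concluding $\tau=\rho'$ would require knowing that $\overline{\Hz y}\cap Y^{ss}\subset O_{\rho'}$, which is exactly the proposition applied to $y$. Your proposed Hilbert--Mumford patch is not a proof, and if fleshed out would amount to an independent argument for the proposition.

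The easy repair that salvages your route: do not pass through the unique closed orbit at all. Apply Richardson (Theorem~\ref{teo:Rich} and Proposition~\ref{prp:closed}, valid in the reductive fibers by Remark~\ref{ilridutt}, via Proposition~\ref{lariduzione}) \emph{separately} inside $F_\rho$ and $F_{\rho'}$: the closure of the $\Hz_L$-orbit of $x$ in $F_\rho$ meets $S_{H_\rho}=Y_S\cap O_\rho$ at some $u$, and that of $y$ in $F_{\rho'}$ meets $Y_S\cap O_{\rho'}$ at some $v$. Then $u\in\overline{\Hz x}$ and $v\in\overline{\Hz y}\subset\overline{\Hz x}$ in $Y^{ss}$, so $\pi(u)=\pi(x)=\pi(v)$. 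By Theorem~\ref{teo:GIT} the map $\pi|_{Y_S}$ is exactly the quotient by $\tW$, so $u$ and $v$ lie in the same $\tW$-orbit, hence the same $G$-orbit, giving $O_\rho=O_{\rho'}$.
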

\begin{proof}
  As we have pointed out in Corollary \ref{indipendence}, the set of
  semistable points of $Y$ does not depend on the choice of the ample
  line bundle $\calL$.
   
  Let $\calL=\calL_\bgrl$. For $n$ big enough we have that
  $\calL_{n\bgrl+\bgra_D}$ is also ample for every $D \in \grD_Y$. In
  particular for a large enough positive integer $m$, we can find
  invariant sections $f_D \in \grG(Y,\calL_{m(n\bgrl+\bgra_D)})^K$ and
  $f \in \grG(Y,\calL_{mn\bgrl})^K$ such that $f_D(y)\neq 0$ and $f(y)
  \neq 0$.
  
  Set now $$U =\{z \in Y \msuchthat f(z)\neq 0 \mand f_D(z) \neq 0
  \mforall D \in \grD_Y\}.$$ The set $U$ is an open affine $H$
  invariant subset of $Y^{ss}$ with the property that if
  $$\pi:Y^{ss}\to K\lGIT Y$$
  is the quotient morphism $U=\pi^{-1}(\pi(U)).$
  In particular $x\in U$.
   Furthermore on $U$ the line bundles
  $\calL_{m(n\bgrl+\bgra_D)}$ and $\calL_{mn\bgrl}$ have a $H$
  equivariant trivialization. 
  
  It follows that also the line bundle
  $\calL_{m\gra_D}=\calL_{m(n\bgrl+\bgra_D)}\otimes
  \calL_{mn\bgrl}^{-1}$ has an $H$ equivariant trivialization $\phi_D$
  on $U$.  Thus we can consider the $H$ invariant function $t_D =
  \phi_D(s_D^m)$ on $U$. Now by Theorem \ref{teo:toroidali} a $G$
  orbit in $Y$ is determined by the set of $D$ such that $s_D$
  vanishes on the orbit.  This implies our claim.
\end{proof}  

\begin{oss}\label{oss:closed1}
  The following simple example shows that Proposition
  \ref{lem:Gorbiteechiusure} does not hold for a non toroidal $Y$.
  Take the compactification $\mP(\text{End}(\mC^3))$ of $PSL(3)$, so
  $G=SL(3) \times SL(3)$ and $K$ is the normalizer of the diagonal
  copy of $SL(3)$. The elements
$$
x=\left(\begin{matrix}
  0 & 1 & 0 \\ 0 & 0 & 0 \\ 0 & 0 & 1
\end{matrix}\right)
\quad \mand \quad
y = \left(\begin{matrix}
  0 & 0 & 0 \\ 0 & 0 & 0 \\ 0 & 0 & 1
\end{matrix}\right)
$$
give a counterexample to the statement in the Proposition.
\end{oss}

Let $U$ be any $G$ stable open subset of smooth toroidal projective
embedding $Y$. The proof of Proposition \ref{lem:Gorbiteechiusure} 
implies
  
\begin{prp} $\pi^{-1}(\pi(U\cap Y^{ss}))=U\cap Y^{ss}$. Furthermore
  $$\pi_{ |U\cap Y^{ss}}:U\cap Y^{ss}\to \tilde W\backslash U_S$$
  is a well defined quotient map. 
\end{prp}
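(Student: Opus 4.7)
The plan is to obtain both statements as straightforward consequences of Proposition \ref{lem:Gorbiteechiusure}, Theorem \ref{teo:GIT}, and Proposition \ref{lechiusse}. The heart of the argument is the saturation identity $\pi^{-1}(\pi(U\cap Y^{ss}))=U\cap Y^{ss}$; once this is in hand, the second assertion follows by abstract nonsense about good quotients together with the identification $K\lGIT Y\isocan \tW\backslash Y_S$.

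For the saturation, I would take $x\in U\cap Y^{ss}$ and $y\in Y^{ss}$ with $\pi(y)=\pi(x)$. Since $\pi:Y^{ss}\to K\lGIT Y$ is a good quotient, the fibers identify closed $K$-orbits, and the closures $\overline{Kx}$ and $\overline{Ky}$ taken inside $Y^{ss}$ must meet: pick $z\in\overline{Kx}\cap\overline{Ky}\cap Y^{ss}$. Applying Proposition \ref{lem:Gorbiteechiusure} twice (once to the pair $(x,z)$ and once to $(y,z)$) gives $z\in G\cdot x$ and $z\in G\cdot y$, hence $G\cdot x=G\cdot y$. Since $U$ is $G$-stable and contains $x$, this forces $y\in U$, which proves $\pi^{-1}(\pi(U\cap Y^{ss}))\subseteq U\cap Y^{ss}$; the reverse inclusion is tautological.

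For the quotient statement, the saturation just proved means that $U\cap Y^{ss}$ is a $\pi$-saturated open subset of $Y^{ss}$, so the restriction $\pi_{|U\cap Y^{ss}}$ is a good quotient onto its image $V:=\pi(U\cap Y^{ss})$, which is open in $K\lGIT Y$. It remains to identify $V$ with $\tW\backslash U_S$. Observe first that $U_S=U\cap Y_S$ is $\tW$-stable, since $\tW=N_{\Hz}(S)/Z_{\Hz}(S)$ acts through elements of $G$ and $U$ is $G$-stable; hence $\tW\backslash U_S$ is a well-defined open subset of $\tW\backslash Y_S\isocan K\lGIT Y$ via Theorem \ref{teo:GIT}. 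By Proposition \ref{lechiusse} every point of $Y_S$ has closed $K$-orbit in some $O_\rho^{ss}$, so in particular $Y_S\subset Y^{ss}$, whence $U_S\subset U\cap Y^{ss}$ and $\tW\backslash U_S\subseteq V$. Conversely, given $x\in U\cap Y^{ss}$, Theorem \ref{teo:GIT} produces $s\in Y_S$ with $\pi(s)=\pi(x)$; by the saturation $s\in U\cap Y^{ss}\cap Y_S=U_S$, so $\pi(x)\in\tW\backslash U_S$, giving the reverse containment.

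The main obstacle is really already resolved in Proposition \ref{lem:Gorbiteechiusure}; all the geometric content of the present statement is packaged there. The only subtle point worth stating explicitly is that the good quotient of a saturated open subset of a good quotient is itself a good quotient, together with the remark that $U_S$ inherits $\tW$-stability from the $G$-stability of $U$.
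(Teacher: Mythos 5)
Your argument is correct and is essentially the argument the paper intends: the paper simply asserts that the proof of Proposition \ref{lem:Gorbiteechiusure} implies the statement, and you have filled in exactly the standard GIT bookkeeping (orbit closures of points in the same fibre of a good quotient must meet, saturated open subsets restrict to good quotients with open image, and the image is identified with $\tW\backslash U_S$ via Theorem \ref{teo:GIT}). The only nitpick is that the inclusion $Y_S\subset Y^{ss}$ is more directly a consequence of Proposition \ref{prp:OF} (or of the proof of Theorem \ref{teo:GIT}, as the paper itself notes in proving Theorem \ref{teo:closed}) than of Proposition \ref{lechiusse}, whose statement already presupposes it.
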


Notice however that, as the following example shows, there are ample
line bundles $\calL$ on $Y$ such that if we restrict $\calL$ to $U$,
$U^{ss}(\calL)\neq U\cap Y^{ss}$. Indeed, take $Y$ equal to the
wonderful embedding of $PSL(3)$, $\calL=\calL_{\tom_1+\tom_2}$ and $U$
equal to the complement of the divisor of equation $s_{\tal_1}=0$.
Then $U$ is isomorphic to the open set in $\mP(\text{End}(\mC^3))$ of
classes of matrices of rank at least 2 and there is an invariant
namely $s_{\tilde \alpha_1}^{-1}p_{\tal_1}^3$, which up to a constant
gives the cube of the trace, defined on $U$ and not vanishing on
$$x=
\left(\begin{matrix} 0 & 1 & 0 \\ 0 & 0 & 0 \\ 0 & 0 & 1
\end{matrix}\right)
$$
while $x$ is not semistable in $Y$.

\medskip

From Propositions \ref{lechiusse} and \ref{lem:Gorbiteechiusure} we
finally get
\begin{teo}\label{teo:closed}Let $Y$ be a smooth toroidal
  compactification of $G/H$ and let $\Hz \subset K \subset \Had$.  Let
  $x \in Y^{ss}$ then $K x$ is closed in $Y^{ss}$ if and only if $K x
  \cap Y_S \neq \vuoto$.
\end{teo}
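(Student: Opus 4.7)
The plan is to prove the two implications separately, using Propositions \ref{lechiusse} and \ref{lem:Gorbiteechiusure} for the ``if'' direction and the good quotient structure from Theorem \ref{teo:GIT} for the ``only if'' direction.

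For the implication ($\Leftarrow$), assume $Kx \cap Y_S \neq \vuoto$ and, after replacing $x$ by an element of this intersection, suppose $x \in Y_S$. Let $\rho$ be the face of $F_Y$ with $x \in O_\rho$, so that $x \in Y_S \cap O_\rho$. Proposition \ref{lechiusse} gives that $Kx$ is closed in $O_\rho^{ss} = Y^{ss} \cap O_\rho$; in particular $x \in Y^{ss}$. To promote this to closedness in $Y^{ss}$, apply Proposition \ref{lem:Gorbiteechiusure}: every point of the closure of $Kx$ in $Y^{ss}$ lies in the $G$-orbit $O_\rho$, hence in $O_\rho^{ss}$. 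Therefore $\overline{Kx}^{Y^{ss}} = \overline{Kx}^{O_\rho^{ss}} = Kx$.

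For the implication ($\Rightarrow$), consider the good quotient $\pi : Y^{ss} \to K \lGIT Y$ and recall that every fiber of a good quotient by a reductive group contains a unique closed orbit. By Theorem \ref{teo:GIT} the inclusion $Y_S \hookrightarrow Y^{ss}$ (which makes sense by the previous paragraph) descends to an isomorphism $\tilde W\backslash Y_S \isocan K \lGIT Y$; in particular the composition $Y_S \hookrightarrow Y^{ss} \xrightarrow{\pi} K \lGIT Y$ coincides with the quotient by $\tilde W$ and is therefore surjective. Given $x \in Y^{ss}$ with $Kx$ closed, choose $y \in Y_S$ with $\pi(y) = \pi(x)$. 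By the implication already proved, $Ky$ is closed too, so the unique closed orbit in the fiber $\pi^{-1}(\pi(x))$ must coincide with both $Kx$ and $Ky$. Hence $Kx = Ky$ and $y \in Kx \cap Y_S$.

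No substantive obstacle arises: the genuine content is in Propositions \ref{lechiusse}, \ref{lem:Gorbiteechiusure}, and Theorem \ref{teo:GIT}, and the argument merely combines them through the general GIT fact that fibers of good quotients by reductive groups have a unique closed orbit. The only subtlety is that one must first verify $Y_S \subset Y^{ss}$ stratum by stratum before the quotient map can be restricted to $Y_S$; this is exactly what Proposition \ref{lechiusse} provides, so the two directions end up being logically intertwined rather than independent.
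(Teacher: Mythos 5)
Your proof is correct and follows essentially the same route as the paper: the ``if'' direction combines Proposition \ref{lechiusse} with Proposition \ref{lem:Gorbiteechiusure} (you merely make explicit the step, left implicit in the paper, that closedness in $O_\rho^{ss}$ upgrades to closedness in $Y^{ss}$ because the closure of a $K$ orbit of semistable points stays in one $G$ orbit), and the ``only if'' direction uses the surjectivity of $\pi|_{Y_S}$ coming from Theorem \ref{teo:GIT} together with the uniqueness of the closed orbit in each fiber of a good quotient. No changes needed.
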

\begin{proof} From the proof of Theorem \ref{teo:GIT} we get that
  $Y_S\subset Y^{ss}$ and the fact that if $x\in Y_S$ its orbit is
  closed is Proposition \ref{lechiusse}.

  On the other hand since $K\backslash\backslash Y=\tilde W\backslash
  Y_S$, the restriction of the quotient map
$$\pi :Y^{ss}\to K\backslash\backslash Y$$
to $Y_S$ is surjective. Given $p\in K\backslash\backslash Y$,
$\pi^{-1}(p)$ contains a unique closed orbit and this by the first
part is necessarily the orbit of a point in $\pi^{-1}(p)\cap
Y_S$\end{proof}

\bibliographystyle{plain}
\def\dbar{\leavevmode\hbox to 0pt{\hskip.2ex \accent"16\hss}d}

\end{document}